\numberwithin{equation}{section}
\newtheorem{theorem}{Theorem}
\newtheorem{lemma}{Lemma}[section]
\newtheorem{prop}[lemma]{Proposition}
\newtheorem{conj}{Conjecture}
\theoremstyle{definition}
\newtheorem{defn}[lemma]{Definition}
\newtheorem{remark}[lemma]{Remark}
\crefname{subsection}{Subsection}{Subsections}
\newcommand{\R}{\mathbb{R}}
\newcommand{\bbA}{\mathbb{A}}
\renewcommand{\H}{\mathbb{H}}
\newcommand{\C}{\mathbb{C}}
\newcommand{\Q}{\mathbb{Q}}
\newcommand{\Z}{\mathbb{Z}}
\renewcommand{\O}{\mathrm{O}}
\newcommand{\N}{\mathbb{N}}
\newcommand{\X}{\mathbb{X}}
\newcommand{\B}{\mathcal{B}}
\newcommand{\g}{\mathfrak{g}}
\renewcommand{\a}{\mathfrak{a}}
\newcommand{\GL}{\mathrm{GL}}
\newcommand{\PGL}{\mathrm{PGL}}
\newcommand{\SL}{\mathrm{SL}}
\newcommand{\SO}{\mathrm{SO}}
\newcommand{\PSO}{\mathrm{PSO}}
\newcommand{\PSL}{\mathrm{PSL}}
\newcommand{\PO}{\mathrm{PO}}
\newcommand{\Ht}{\mathrm{ht}}
\newcommand{\Eis}{\mathrm{Eis}}
\newcommand{\Mat}{\mathrm{Mat}}
\newcommand{\res}{\mathrm{res}}
\newcommand{\diag}{\mathrm{diag}}
\newcommand{\ind}{\mathrm{Ind}}
\newcommand{\supp}{\mathrm{supp}}
\newcommand{\dist}{\mathrm{dist}}
\newcommand{\n}[1]{\left\Vert #1\right\Vert }
\newcommand{\calI}{\mathcal{I}}
\newcommand{\calF}{\mathcal{F}}
\newcommand{\calB}{\mathcal{B}}
\newcommand{\calS}{\mathcal{S}}
\newcommand{\calH}{\mathcal{H}}
\newcommand{\cusp}{\mathrm{cusp}}
\newcommand{\disc}{\mathrm{disc}}
\newcommand{\One}{\mathds{1}}
\newcommand{\vol}{\mathrm{vol}}
\renewcommand{\d}{\,\mathrm{d}}
\renewcommand*{\showkeyslabelformat}[1]{%
   \fbox{\vbox{\hsize=1.1cm\normalfont\small\url{#1}\par}}}
\title{Optimal Diophantine Exponents for $\mathrm{SL}(n)$}
\author{Subhajit Jana}
\address{Queen Mary University of London, Mile End Road, London E14 NS, UK.}
\email{s.jana@qmul.ac.uk}
\author{Amitay Kamber}
\address{Centre for Mathematical Sciences, Wilberforce Road, Cambridge CB3 0WB, UK.}
\email{ak2356@dpmms.cam.ac.uk}
\begin{document}

\begin{abstract}
The \emph{Diophantine exponent} of an action of a group on a homogeneous space, as defined by Ghosh, Gorodnik, and Nevo, quantifies the complexity of approximating the points of the homogeneous space by the points on an orbit of the group. We show that the Diophantine exponent of the $\mathrm{SL}_n(\mathbb{Z}[1/p])$-action on the generalized upper half-space $\mathrm{SL}_n(\mathbb{R})/\mathrm{SO}_n(\mathbb{R})$, lies in $[1,1+O(1/n)]$, substantially improving upon Ghosh--Gorodnik--Nevo's method which gives the above range to be $[1,n-1]$. We also show that the exponent is \emph{optimal}, i.e.\ equals one, under the assumption of \emph{Sarnak's density hypothesis}.
The result, in particular, shows that the optimality of Diophantine exponents can be obtained even when the \emph{temperedness} of the underlying representations, the crucial assumption in Ghosh--Gorodnik--Nevo's work, is not satisfied.
The proof uses the spectral decomposition of the homogeneous space 
and bounds on the local $L^2$-norms of the Eisenstein series.
\end{abstract}

\maketitle

\section{Introduction}

\subsection{Main Theorems}
Let $p$ be a prime. It is a standard fact that $\SL_n(\Z[1/p])$ is dense in $\SL_n(\R)$. Following Ghosh, Gorodnik and Nevo \cite{ghosh2015diophantine,ghosh2018best}, we wish to make this density quantitative. 

Let $\H^n := \SL_n(\R)/\SO_n(\R)$ be the symmetric space of $\SL_n(\R)$. Fix an $\SL_n(\R)$-invariant Riemannian metric $\dist$ on $\H^n$.

We define the \emph{height} of $\gamma\in\SL_n(\Z[1/p])$
as 
\[\Ht(\gamma)=\min\left\{ k \in \N\mid p^{k}\gamma\in \Mat_{n}\left(\Z\right)\right\},\]
where $\Mat_n$ denotes the space of $n\times n$ matrices.

Our work is based on the following definition, motivated by \cite{ghosh2018best} (see \cref{sec:GGN} for comparison).

\begin{defn}\label{def:Diopnatine exponents intro}
The \emph{Diophantine exponent} $\kappa(x,x_0)$ of $x,x_0\in \H^n$ is the infimum over $\zeta<\infty$, such that there exists an $\varepsilon_0 = \varepsilon_0(x,x_0,\zeta)$ with the property that for every $\varepsilon < \varepsilon_0$ there is a $\gamma \in \SL_n(\Z[1/p])$ satisfying
\begin{align*}
\dist(\gamma^{-1} x,x_0)\le \varepsilon & \text{  and  }
\Ht(\gamma)\le \zeta \frac{n+2}{2n} \log_p(\varepsilon^{-1}).
\end{align*}
The \emph{Diophantine exponent of $x_0 \in \H^n$} is
\[
\kappa(x_0)=\inf\left\{\tau\mid \kappa(x,x_0)\le\tau\text{ for almost every }x\in\H^n\right\}.
\]
The \emph{Diophantine exponent of $\H^n$} is
\[
\kappa=\inf\left\{\tau\mid\kappa(x,x_0)\le\tau\text{ for almost every }(x,x_0)\in\H^n\times\H^n\right\}.
\]
\end{defn}

It is shown in \cite{ghosh2018best} (in a more generalized context) that
\begin{prop}
for every $x_0\in \H^n$ and almost every $x\in\H^n$ we have $\kappa(x,x_0)=\kappa(x_0)$, and for almost every $x,x_0\in\H^n$ we have $\kappa(x,x_0)=\kappa(x_0)=\kappa$.
\end{prop}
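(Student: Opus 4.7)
The plan rests on two observations: (i) $\kappa(x,x_0)$ is jointly measurable and $\SL_n(\Z[1/p])$-invariant in each argument separately, and (ii) the $\SL_n(\Z[1/p])$-action on $(\H^n,\vol)$ is ergodic.

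For (i), measurability is obtained by restricting the infimum defining $\kappa$ to rational $\zeta$, so the event $\{\kappa(x,x_0)\le\zeta\}$ becomes a countable Boolean combination over $\gamma\in\SL_n(\Z[1/p])$ and rational $\epsilon_0,\epsilon>0$ of the Borel conditions $\{\dist(\gamma^{-1}x,x_0)\le\epsilon\}$. Invariance in turn rests on the elementary height inequality
\[
\Ht(\alpha\beta)\le\Ht(\alpha)+\Ht(\beta),\qquad\alpha,\beta\in\SL_n(\Z[1/p]),
\]
which comes from clearing denominators. For $\delta\in\SL_n(\Z[1/p])$, the change of variable $\gamma'=\gamma\delta$ (resp.\ $\gamma'=\delta^{-1}\gamma$) converts the defining condition for $\kappa(x,\delta x_0)\le\zeta$ (resp.\ $\kappa(\delta x,x_0)\le\zeta$) into the condition for $\kappa(x,x_0)\le\zeta$, except that the height constraint now involves $\Ht(\gamma'\delta^{-1})$ (resp.\ $\Ht(\delta\gamma')$). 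The height inequality yields $|\Ht(\gamma')-\Ht(\gamma)|\le\Ht(\delta)+\Ht(\delta^{-1})=O_\delta(1)$, an additive error absorbed by the logarithmic budget $\log_p(\epsilon^{-1})$ upon slightly enlarging $\zeta$. Hence $\kappa(\delta x,x_0)=\kappa(x,x_0)=\kappa(x,\delta x_0)$ for all $x,x_0$ and all $\delta$.

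Combining (i) and (ii), for fixed $x_0$ the function $f_{x_0}(x):=\kappa(x,x_0)$ is measurable and $\SL_n(\Z[1/p])$-invariant, hence a.e.\ constant; since $\kappa(x_0)$ is by definition the essential supremum of $f_{x_0}$, this constant equals $\kappa(x_0)$, which proves the first assertion. For the second, the pointwise identity $\kappa(x,\delta x_0)=\kappa(x,x_0)$ yields $\kappa(\delta x_0)=\kappa(x_0)$ for every $\delta$ and every $x_0$, so $x_0\mapsto\kappa(x_0)$ is itself measurable and $\SL_n(\Z[1/p])$-invariant (measurability follows from Fubini applied to the jointly measurable $\kappa$), hence a.e.\ constant. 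By Fubini and the definition of $\kappa$, this constant must be $\kappa$.

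The main obstacle is justifying (ii), the ergodicity of the $\SL_n(\Z[1/p])$-action on the infinite-volume space $\H^n$. The cleanest route uses the lattice embedding $\SL_n(\Z[1/p])\hookrightarrow\SL_n(\R)\times\SL_n(\Q_p)$: an $\SL_n(\Z[1/p])$-invariant measurable $A\subset\H^n$ lifts to a left-$\SL_n(\Z[1/p])$-invariant, right-$(\SO_n(\R)\times\SL_n(\Q_p))$-invariant subset of $\SL_n(\R)\times\SL_n(\Q_p)$, which descends to a right-$\SL_n(\Q_p)$-invariant measurable subset of the finite-volume quotient $\SL_n(\Z[1/p])\backslash(\SL_n(\R)\times\SL_n(\Q_p))$; Howe--Moore forces the latter to be null or conull, whence the same for $A$.
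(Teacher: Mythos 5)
Your proof is correct and follows the same route the paper sketches (citing \cite{ghosh2018best}): $\kappa(x,x_0)$ is $\Gamma\times\Gamma$-invariant via the subadditivity of $\Ht$, and the $\Gamma$-action on $\H^n$ is ergodic, so the function is a.e.\ constant in each argument. You have simply filled in the details — measurability, the additive height error being absorbed by the logarithmic budget, and the Howe--Moore argument for ergodicity — that the paper leaves implicit.
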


The artificial insertion of the factor $\frac{n+2}{2n}$ in the definition of $\kappa(x,x_0)$ is to ensure that 
\[\text{for every } x_0\in \H^n \quad\kappa(x_0)\ge 1,\quad\text{and}\quad\kappa\ge 1\] as we explain below. It is thus natural to wonder about a corresponding upper bound for $\kappa(x_0)$, namely whether 
\[\text{for every } x_0\in \H^n \quad\kappa(x_0)= 1,\]
and in particular, whether
\[\kappa = 1.\] In this case we say that $\kappa$ is the \emph{optimal Diophantine exponent}. 

In \cite{ghosh2018best} the Diophantine exponents are studied in great generality for a lattice $\Gamma$ in a group $G$, acting on a homogenous space $G/H$, when $H$ is a subgroup of $G$. In our case $\Gamma = \SL_n(\Z[1/p])$, $G= \SL_n(\R) \times \SL_n(\Q_p)$ and $H= \SO_n(\R) \times \SL_n(\Q_p)$ (see \cref{sec:GGN} for details). The optimality of $\kappa$ is proved in \cite{ghosh2018best} under certain crucial \emph{temperedness} assumptions of the action of $H$ on $L^2 (\Gamma \backslash G)$.
Unfortunately, in our particular situation, the temperedness assumption is not satisfied. As we explain in \cref{sec:GGN}, the arguments of \cite{ghosh2018best} imply the following non-optimal upper bounds on $\kappa$. 

\begin{theorem}[Ghosh--Gorodnik--Nevo, \cite{ghosh2018best} and \cref{sec:GGN}]\label{thm:GGN}
Let $n>1$ be a positive integer.
\begin{enumerate}
    \item For $n=2$, assuming the Generalized Ramanujan Conjecture (GRC) for $\GL(2)$, for every $x_0\in \H^2$ we have $\kappa(x_0)=1$. 
    Unconditionally, for every $x_0\in\H^2$ we have $\kappa(x_0)\le 32/25$.
    \item For $n\ge 3$, for every $x_0\in \H^n$ we have $\kappa(x_0)\le n-1$.
\end{enumerate}
In particular, the same bounds also hold for $\kappa$.
\end{theorem}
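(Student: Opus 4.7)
The plan is to derive the theorem from the general framework of Ghosh--Gorodnik--Nevo by translating the Diophantine approximation problem into quantitative equidistribution of $H$-orbits on $\Gamma\backslash G$, where we take $G=\SL_n(\R)\times\SL_n(\Q_p)$, $\Gamma=\SL_n(\Z[1/p])$ embedded diagonally, and $H=\SO_n(\R)\times\SL_n(\Q_p)$ as in the discussion preceding \cref{def:Diopnatine exponents intro}. The existence of $\gamma\in\Gamma$ with $\dist(\gamma^{-1}x,x_0)\le\varepsilon$ and $\Ht(\gamma)\le R$ can be recast as the non-emptiness of the intersection of $\Gamma$ with a suitable bi-$H$-invariant box $B=B_\varepsilon^{(\infty)}\cdot B_R^{(p)}\subset G$, so counting such $\gamma$ amounts to evaluating the automorphic kernel $\sum_{\gamma\in\Gamma}\One_B(x^{-1}\gamma x_0)$ at $(x,x_0)$.

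By a standard unfolding argument this count equals a main term $\vol(B)/\vol(\Gamma\backslash G)$, plus an error term controlled by the operator norm of convolution by $\One_B$ acting on the orthogonal complement of the constants in $L^2(\Gamma\backslash G)$. That norm is in turn controlled by the worst $L^2$-integrability (equivalently, local matrix-coefficient) exponent of the non-trivial irreducible constituents of $L^2(\Gamma\backslash G)$ viewed as $H$-representations. The factor $\tfrac{n+2}{2n}$ in \cref{def:Diopnatine exponents intro} is calibrated so that if every non-trivial constituent is $H$-tempered, the error is dominated by the main term as soon as $R\ge\tfrac{n+2}{2n}\log_p(\varepsilon^{-1})(1+o(1))$, yielding $\kappa(x_0)=1$. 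This proves optimality in case (1) under GRC for $\GL(2)$.

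Unconditionally, one substitutes the best available bound toward Ramanujan into the same estimate. For $n=2$ the Kim--Sarnak bound $\theta_2\le 7/64$ is available, and tracking the resulting loss through the main-term vs.\ error-term comparison gives $\kappa(x_0)\le 1/(1-2\theta_2)=32/25$. For $n\ge 3$, however, $L^2(\Gamma\backslash G)$ carries a substantial non-tempered piece coming from the residual Eisenstein spectrum; the GGN method applies a single worst-case spherical exponent to the entire complement of constants, and the trivial representation (arising as the residue of the maximal parabolic Eisenstein series) forces this exponent to deteriorate linearly in $n$, yielding only $\kappa(x_0)\le n-1$. The principal obstacle is precisely this blunt treatment of the non-tempered spectrum; bypassing it via a refined spectral decomposition that handles cusp forms and Eisenstein series separately, using local $L^2$-bounds on Eisenstein series rather than a single worst-case exponent, is the main task undertaken in the remainder of the present paper.
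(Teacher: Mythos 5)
Your proposal follows essentially the same route as the paper's own proof in \cref{sec:GGN}: place the problem in the Ghosh--Gorodnik--Nevo framework with $G=\SL_n(\R)\times\SL_n(\Q_p)$, $H=\SO_n(\R)\times\SL_n(\Q_p)$, $\Gamma=\SL_n(\Z[1/p])$; reduce the upper bound on $\kappa(x_0)$ to the $L^q$-integrability exponent of the $H$-action on $L^2_0(\Gamma\backslash G)$ via the quantitative mean ergodic theorem (\cref{prop: GGN spectral gap} and \cref{thm:GGN not intro}); and plug in $q=2$ (GRC), $q=64/25$ (Kim--Sarnak), and $q=2(n-1)$ (explicit property $(T)$), with the calibration factor $\tfrac{n+2}{2n}$ converting $qd/(2a)$ into $q/2$. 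The arithmetic — $64/50=32/25$ for $n=2$ and $q/2=n-1$ for $n\ge3$ — matches.

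One small mis-attribution in your $n\ge 3$ discussion: you ascribe $q=2(n-1)$ to "the trivial representation (arising as the residue of the maximal parabolic Eisenstein series)." The trivial representation is actually the residue at $\rho$ of the \emph{minimal} parabolic Eisenstein series, and in any case it spans the constants, which are excluded from $L^2_0(\Gamma\backslash G)$, so it cannot affect the operator norm there. The exponent $q=2(n-1)$ is saturated instead by the continuous spectrum of a maximal parabolic of type $(n-1,1)$ with a one-dimensional (residual) datum on the $\GL(n-1)$ factor, giving $\theta_{\varphi,p}=(n-2)/2$ and hence $\tilde{q}=2(n-1)/((n-1)-(n-2))=2(n-1)$; this is the content of the reference to \cite[Theorem~1.5]{clozel2001hecke} in the text. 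Your broader diagnosis — that applying a single worst-case exponent to all of $L^2_0$ is the bottleneck, to be overcome by treating the spectrum piecewise — is exactly right.
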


\vspace{0.5cm}

One of the goals of this paper is to substantially improve the upper bound of $\kappa$, in particular, to prove that $\kappa$ is \emph{essentially} optimal. Our main theorem is as follows.
\begin{theorem}\label{thm:kappa theorem intro}
Let $n>1$ be a positive integer.
\begin{enumerate}
    \item For $n=2$ or $n=3$ we have $\kappa=1$.
    \item For every $n\ge 4$ we have \[\kappa \le 1+\frac{2\theta_n}{n-1-2\theta_n},\] where $\theta_n$ is the best known bound towards the GRC for $\GL(n)$.
\end{enumerate}
\end{theorem}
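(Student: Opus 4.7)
The plan is to translate the Diophantine counting problem into a spectral estimate on $\Gamma\backslash G$, where $G=\SL_n(\R)\times\SL_n(\Q_p)$ and $\Gamma=\SL_n(\Z[1/p])$ is diagonally embedded, so that $G/H=\H^n$ with $H=\SO_n(\R)\times\SL_n(\Q_p)$. Fix lifts $g_x,g_{x_0}\in\SL_n(\R)$ of $x,x_0\in\H^n$, let $B_T\subset\SL_n(\Q_p)$ be the height ball $\{g_p:\Ht(g_p)\le T\}$ of $p$-adic volume $\asymp p^{Tn(n-1)}$, and let $\phi_\varepsilon$ be a non-negative $\SO_n(\R)$-bi-invariant bump supported within archimedean distance $\varepsilon$ of the identity and normalized so that $\int\phi_\varepsilon=1$. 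The existence of $\gamma\in\Gamma$ with $\dist(\gamma^{-1}x,x_0)\le\varepsilon$ and $\Ht(\gamma)\le T$ is implied by positivity of
\[
N_{T,\varepsilon}(x,x_0):=\sum_{\gamma\in\Gamma}\One_{B_T}(\gamma)\,\phi_\varepsilon\bigl(g_{x_0}^{-1}\gamma^{-1}g_x\bigr).
\]
The Langlands spectral decomposition of $L^2(\Gamma\backslash G)$ expresses $N_{T,\varepsilon}$ as a main term coming from the constant function plus cuspidal, residual and Eisenstein corrections. The main term equals $\vol(B_T)/\vol(\Gamma\backslash G)\asymp p^{Tn(n-1)}$; at the critical parameter $T\approx\tfrac{n+2}{2n}\log_p\varepsilon^{-1}$ this matches the heuristic count $\vol(B_T)\cdot\vol(B(x,\varepsilon))$, since $\dim\H^n=(n-1)(n+2)/2$. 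The rest of the proof consists of showing that the cuspidal, residual and Eisenstein contributions are of strictly smaller order at this critical scale.

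For the cuspidal part, standard Cauchy--Schwarz arguments together with spectral-norm estimates for the convolution operator $R(\One_{B_T})$ restricted to an irreducible $\pi\subset L^2_{\cusp}(\Gamma\backslash G)$ yield a contribution of size $p^{T(n(n-1)/2+\nu_p(\pi))}$ (times a controlled Sobolev factor from $\phi_\varepsilon$), where $\nu_p(\pi)\in[0,\theta_n]$ quantifies the non-temperedness of $\pi_p$. Inserting Kim--Sarnak's uniform bound $\theta_n$ and balancing against the main term gives exactly the exponent $1+\tfrac{2\theta_n}{n-1-2\theta_n}$ of part~(3). Part~(2) replaces this pointwise bound by Sarnak's density hypothesis (\cref{conj:density conj alternative}), which bounds the number of cuspidal $\pi$ attaining any given non-temperedness and suffices to match the main term in aggregate, yielding $\kappa=1$. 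Part~(1) is then immediate from part~(2), since the density hypothesis is known unconditionally for $n=2$ and $n=3$.

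The main obstacle is the contribution of the non-cuspidal (residual and Eisenstein) spectrum. For $\SL_n$ with $n\ge 2$ this spectrum is intrinsically non-tempered—degenerate principal series and Speh-type residues appear—so no Kim--Sarnak pointwise bound is available. The plan is to bound, for each standard parabolic $P=MN$ and cuspidal datum $\varphi$ on the Levi $M$, the local $L^2$-mass
\[
\int_{|\im s|\le T}\int_{B(x,\varepsilon)}|E(g,\varphi,s)|^2\,\d g\,\d s
\]
of the associated Eisenstein packet on a ball of radius $\varepsilon$, using the Maass--Selberg inner product formula for truncated Eisenstein series together with sharp estimates on their constant terms. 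The key point is that the non-temperedness of the induced representation is precisely offset by the narrow archimedean support of $\phi_\varepsilon$ through local harmonic analysis on $M$, so that the required saving matching the exponent $\tfrac{n+2}{2n}$ is attained on the nose. Combining the cuspidal and Eisenstein bounds and optimizing $T$ against $\varepsilon$ then yields the three stated bounds.
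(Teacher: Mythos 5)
Your overall strategy---pass to a spectral $L^2$-estimate on $\Gamma\backslash G$, identify the critical scale $T\approx\frac{n+2}{2n}\log_p\varepsilon^{-1}$ by matching the main term $\vol(B_T)\cdot\varepsilon^d$ against $\vol(\Gamma\backslash G)$, bound the cuspidal contribution by the Kim--Sarnak exponent $\theta_n$ (giving part (3)) or by the density hypothesis (giving part (2)), and deduce part (1) from the known density results for $n=2,3$---matches the paper's skeleton, and your main-term and cuspidal heuristics are essentially right up to normalization. The gap is in the non-cuspidal spectrum.

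Your proposed treatment of the Eisenstein part does not work as stated, and the mechanism you invoke is not the one in play. You claim that ``the non-temperedness of the induced representation is precisely offset by the narrow archimedean support of $\phi_\varepsilon$ through local harmonic analysis on $M$.'' This is not correct: the dangerous non-temperedness is $p$-adic (it comes from the residual shift $\rho_b$ in M{\oe}glin--Waldspurger data on the Levi, giving $\theta_{\varphi,p}$ close to $(n-1)/2$), and an archimedean Paley--Wiener cutoff does nothing to it. What actually compensates in the paper is a counting argument over \emph{shapes}: non-tempered Eisenstein data come from lower-dimensional Levi subgroups, so Weyl's law gives fewer of them (contribution $\varepsilon^{-d_S}$ with $d_S<d$), and this exactly cancels the loss $p^{nk\cdot 2\theta_{\varphi,p}}$ shape by shape (the inequality $F(a,b,n)\ge 0$ in the paper). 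Your proposal never performs this case analysis and so cannot close the argument.

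Second, the object $\int_{|\im s|\le T}\int_{B(x,\varepsilon)}|E(g,\varphi,s)|^2\,dg\,ds$ is not what arises. After spectral decomposition of the $L^2$-norm of the kernel, one gets $\int_{\|\lambda\|\lesssim\varepsilon^{-1}}|\Eis_P(\varphi,\lambda)(x_0)|^2\,d\lambda$ at a \emph{single point} $x_0$; the ball $B(x,\varepsilon)$ appears nowhere. Controlling this pointwise is essentially as hard as a Lindel\"of-type bound and is out of reach for $n\ge 4$. The paper sidesteps this precisely by proving the \emph{averaged} statement $\kappa\le\beta$ rather than $\kappa(x_0)\le\beta$: one integrates $x_0$ over a \emph{fixed} compact set $\Omega$ of positive measure, so that what is needed is $\int_\Omega\int_{\|\lambda\|\le T}|\Eis_P(\varphi,\lambda)(x_0)|^2\,d\lambda\,dx_0\ll T^{\dim\a_P}\log^{n-1}(1+T+\nu_\varphi)$. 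This is the content of the companion-paper estimate, proved via Maass--Selberg and a nontrivial argument; for $n=3$ it is Miller's theorem and for $n=2$ it is classical, which is why part (1) is unconditional. Your proposal never introduces the $x_0$-average and does not explain why the theorem concerns $\kappa$ rather than $\kappa(x_0)$; that distinction is load-bearing, not cosmetic. Finally, the passage from the averaged $L^2$-estimate to ``for almost every $(x,x_0)$'' requires a Borel--Cantelli argument, also absent here.
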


\begin{remark}
We refer to \cref{subsec:GRC and density} for the precise definition of $\theta_n$. From \cref{bound-towards-ramanujan} and \cref{eq:luo-rudnick-sarnak} we obtain that
\[\kappa\le\begin{cases}
{11}/{8}&\text{ for }n=4,\\
({n^2+1})/({n^2-n})= 1+O(1/n)&\text{ for }n\ge 5
\end{cases}\]
Notice that the bound on $\kappa$ gets better as $n$ grows. A non-precise reason is that as $n$ grows, the Hecke operator on the cuspidal spectrum gets closer and closer to having square-root cancellation.
\end{remark}

\vspace{0.5cm}

We also show the optimality of $\kappa$ for any $n$ assuming \emph{Sarnak's Density Hypothesis} for $\GL(n)$ as in \cref{conj:density conj alternative} which is a \emph{much weaker} version of the GRC for $\GL(n)$.

\begin{theorem}\label{thm:optimal-SDH}
For every $n$, assuming \emph{Sarnak's Density Hypothesis} for $\GL(n)$ as in \cref{conj:density conj alternative} below,
    we have $\kappa=1$.
\end{theorem}

We consider \cref{thm:optimal-SDH} as a \emph{proof of concept} for the claim that the Diophantine exponent is usually optimal, even without the temperedness assumption. This is in line with \emph{Sarnak's Density Hypothesis} in the theory of automorphic forms, which informally states that the automorphic forms are expected to be tempered \emph{on average} (see discussion in \cref{subsec:GRC and density}). Our result, at least on the assumption of the density hypothesis, also negatively answers a question of Ghosh--Gorodnik--Nevo who asked whether optimal Diophantine exponent implies temperedness; see \cite[Remark~3.6]{ghosh2018best} (\cite{kleinbock2015sphere} and \cite{sardari2019siegel} also provide answers to this question, in different contexts).

\vspace{0.5cm}

Note that \cref{thm:GGN} is about $\kappa(x_0)$ while \cref{thm:kappa theorem intro} and \cref{thm:optimal-SDH} are about $\kappa$. The difference may seem minor but is crucial for the proof. In the general setting of Ghosh--Gorodnik--Nevo, we expect that \emph{usually} $\kappa=1$ (e.g., when $\SL(n)$ is replaced by another group), but $\kappa(x_0)$ may be larger, because of local obstructions. 

An example, based on \cite[Section~2.1]{ghosh2013aut}, is the action of $\SO_{n+1}(\Z[1/p])$ on the sphere $S^{n}$, which we discuss shortly in \cref{subsec:sphere}. In our situation, we conjecture that for every $x_0\in \H^n$ we have $\kappa(x_0) = 1$, but do not know how to prove it even assuming the GRC, except for $n=2$ (as in \cref{thm:GGN}) and $n=3$.
\begin{theorem}\label{thm:local-kappa-bound}
    For $n=3$, assuming the GRC, we have $\kappa(x_0) = 1$ for every $x_0\in \H^n$.
\end{theorem}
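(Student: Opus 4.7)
The plan is to refine the proof of $\kappa = 1$ for $n = 3$ from \cref{thm:kappa theorem intro}(1) to the pointwise statement $\kappa(x_0) = 1$ at every $x_0 \in \H^3$. The averaged statement is obtained from bounds on an automorphic kernel that can be integrated over $x_0$ in a fundamental domain, whereas the pointwise version demands control uniform in $x_0$ of the spectral expansion of the kernel at the specific point.

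First I would reduce, by the standard test-function argument of Ghosh--Gorodnik--Nevo, the Diophantine counting problem to the pairing of an automorphic kernel $K_k$ on $\Gamma \backslash G$ with localised functions concentrated near $x$ and near $x_0$. I would then decompose $K_k$ along the spectral decomposition of $L^2(\Gamma\backslash G)$ into a trivial piece, a cuspidal piece, a maximal parabolic Eisenstein piece induced from a cuspidal representation of $\GL(2)$, and a Borel Eisenstein piece. The trivial piece furnishes the main term, and the task is to bound each of the remaining pieces pointwise at $x_0$ by a strictly smaller quantity.

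Under GRC for $\GL(3)$, the cuspidal spectrum is tempered at every place, so the Hecke eigenvalues enjoy the Ramanujan bound and the cuspidal part of the kernel satisfies a square-root-cancellation estimate whose pointwise version at $x_0$ follows from the averaged version by a Sobolev-type inequality applied on the compact support of the localised test function. For the maximal parabolic Eisenstein piece, GRC for $\GL(2)$ ensures that the inducing cuspidal data is tempered, and the pointwise value of the Eisenstein series at $x_0$ is then controlled via its explicit Langlands integral representation. The Borel Eisenstein piece is built from unitary characters of the torus; in rank two the explicit Iwasawa expansion makes its pointwise size at $x_0$ accessible by an Epstein-zeta-type estimate uniform in $x_0$.

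The hard part will be to rule out exceptional concentration of the Eisenstein contributions at the particular $x_0$. In the averaged proof one replaces pointwise values $E(x_0, s)$ by an $L^2$-norm over a compact set, which is controlled by the Maass--Selberg relations; here one must instead directly examine the Iwasawa expansion of $E$ at $x_0$ and show that the tempered inducing data, under GRC, do not conspire to make this expansion anomalously large at any specific $x_0$. This verification is tractable in rank two because the Weyl group and the Bruhat cells are still simple enough to handle explicitly, but the analogous argument appears to break down in higher rank, consistent with the authors' inability to establish $\kappa(x_0) = 1$ unconditionally for $n \ge 4$ even under the density hypothesis.
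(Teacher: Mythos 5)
Your high-level strategy---spectral decomposition of the kernel, separate treatment of trivial, cuspidal, and Eisenstein pieces, and the observation that the difficulty in passing from $\kappa$ to $\kappa(x_0)$ lies in controlling Eisenstein non-concentration at the specific point $x_0$---matches the paper's outline. The genuine gap is that you are missing the paper's key technical device: the \emph{local weak Weyl law} of \cref{prop:Weyl Law local}, which asserts that for any compact $\Omega\ni x_0$,
\[
\sum_P C_P \sum_{\varphi\in \calB_P,\, \nu_\varphi \le T}\ \intop_{\lambda \in i\a_P^*,\, |\lambda| \le T} |\Eis_P(\varphi,\lambda)(x_0)|^2 \,\d \lambda \ \ll_\Omega\ T^{d},
\]
uniformly in $x_0\in\Omega$. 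This estimate already encodes pointwise non-concentration of the continuous spectrum at $x_0$. Once in hand, the argument for every \emph{tempered} shape is uniform and requires no pointwise bounds on individual Eisenstein series: for any shape $S=((a_1,b_1),\dots,(a_r,b_r))$ with all $b_i=1$, the GRC forces $\theta_{\varphi,p}=0$, so the Hecke factor $p^{kn(2\theta_{\varphi,p}-(n-1))}$ is exactly $p^{-kn(n-1)}$, and with $k=\lfloor\tfrac{n+2}{2n}\log_p(\varepsilon^{-1})\rfloor$ one has $p^{kn(n-1)}\asymp\varepsilon^{-d}$, which precisely cancels the $T^d$ from the Weyl law. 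This covers the $\GL(3)$-cuspidal spectrum, the maximal-parabolic Eisenstein series with cuspidal $\GL(2)$ data, \emph{and} the Borel Eisenstein series in one stroke. Your proposed Sobolev-type inequality for the cuspidal piece and Langlands integral representation for the maximal parabolic piece are unnecessary, and it is unclear they would close the argument even if carried out.

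A second, smaller misattribution: you propose an Epstein-zeta-type estimate for the \emph{Borel} Eisenstein piece, but in the paper's bookkeeping the Borel Eisenstein series is the cuspidal shape $S=((1,1),(1,1),(1,1))$ and is handled by the Weyl-law argument above. The Epstein-zeta convexity bound (\cref{prop:max-degn-eis-bound}) is needed only for the single non-tempered Eisenstein shape $S=((1,2),(1,1))$---the maximal \emph{degenerate} Eisenstein series induced from the trivial representation on the $\GL_2\times\GL_1$ Levi, with $\theta=1/2$, which is literally the $\GL(3)$ Epstein zeta function. Correspondingly, the obstruction at $n\ge 4$ that you attribute to the Weyl group and Bruhat cells is really the appearance of further non-tempered residual shapes (e.g.\ $((2,2))$, $((1,2),(1,2))$, $((1,2),(2,1))$) for which uniform pointwise bounds on the associated Eisenstein series are not currently available.
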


\vspace{0.1cm}

\subsection{Almost-covering}

Our proofs of \cref{thm:kappa theorem intro} and \cref{thm:optimal-SDH} use the spectral theory of $L^2(\SL_n(\Z)\backslash\H^n)$. It is therefore helpful to understand the problem in an equivalent language that is more suitable for the spectral theory and is of independent interest.

First, it suffices to assume that $x,x_0\in \X:=\SL_n(\Z) \backslash \H^n$ as the Riemannian distance $\dist$ is left-invariant under $\SL_n(\Z)$ and the height function $\Ht$ is bi-invariant under $\SL_n(\Z)$.
The set of points of the form $\SL_n(\Z)\gamma x_0 \in \X$ for $\gamma\in\SL_n(\Z[1/p])$ with $\Ht(\gamma)\le k$ is in bijection with the set $R(1) \backslash R(p^{kn})$, where $R(p^{kn}) := \{A \in \Mat_n(\Z)\mid \det(A)=p^{kn}\}$. More precisely, there is a bijection $$\SL_n(\Z) \backslash \{\gamma\in\SL_n(\Z[1/p])\mid \Ht(\gamma)\le k\}\cong R(1) \backslash R(p^{kn})$$ given by multiplication by $p^k$, and the bijection above holds for generic $x_0$. In general, we have a surjection from $R(1) \backslash R(p^{kn})$ to the set on the left hand side above. 

It is well known that 
\[|R(1)\backslash R(p^{kn})|\asymp p^{kn(n-1)},\] 
See \cref{subsec:Hecke operators} (and \cref{subsec:notations} for the notation $\asymp$). 

The parameter $\kappa(x_0)$ measures the \emph{almost-covering} of $\X$ by the set of points above. Consider a sequence of natural numbers $k=k(\varepsilon)$, such that the $\varepsilon$-balls around the $|R(1)\backslash R(p^{kn})|$ points in $\X$ of the form $\SL_n(\Z)\gamma x_0$ with $\Ht(\gamma)\le k$ cover all but $o(1)$ of the space $\X$, when $\varepsilon\to 0$ (compare \cite[Proposition~3.1]{parzanchevski2018super}). The number $\kappa(x_0)$ is closely related to the growth of $k(\varepsilon)$ as $\varepsilon\to 0$.

Therefore, it is required that as $\varepsilon\to 0$,
\[
m(B_\varepsilon)|R(1)\backslash R(p^{kn})|\ge m(\X)-o(1),
\]
where $m$ denotes the $\SL_n(\R)$-invariant measure on $\X$ and $m(B_\varepsilon)$ is the volume of a ball of radius $\varepsilon$ in $\H^n$. We have $m(B_\varepsilon)\asymp \varepsilon^d$ where $d:=\dim \H^n=\frac{(n+2)(n-1)}{2}$. Thus, we deduce that
\[
k(\varepsilon)\ge \frac{d}{n(n-1)}\log_p(\varepsilon^{-1})(1-o(1)).
\]
The same argument shows that $\kappa(x_0)\ge 1$.

We remark that one can also consider the problem of \emph{covering}, where we would like to cover an entire compact region of $\X$ by small balls of radius $\varepsilon$ around the Hecke points (unlike \cref{thm:kappa theorem intro} and \cref{thm:optimal-SDH}, which are essentially about \emph{almost-covering}). This is also the difference between part $(i)$ and part $(ii)$ of \cite[Theorem 1.3]{ghosh2013aut}. See also \cite{chiu1995covering} for the covering problem of Hecke points around $e\in \X$. We will not discuss it further in this work but mention that our methods, and in particular the arguments in \cref{thm:local-kappa-bound}, can lead to a better understanding of the covering problem as well, but the results are not expected to be optimal. For example, in the covering problem on the $3$-dimensional sphere the spectral approach leads to a covering exponent which is $3/2$ times the conjectural value (see \cite{browning2019twisted} and the references therein).

\begin{figure}[htpb]
    \centering
    \includegraphics[width=6cm]{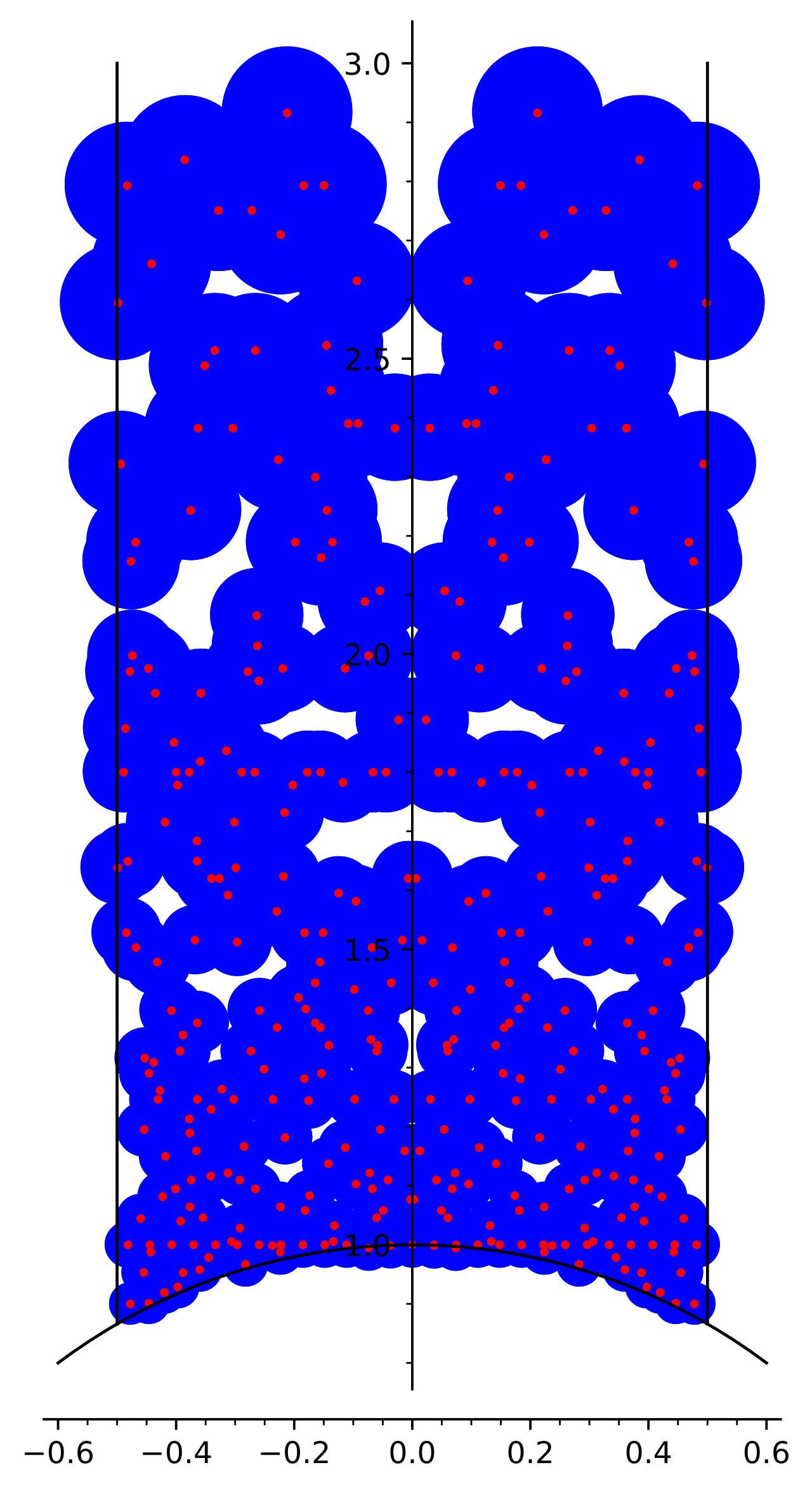}
    \caption{Covering of $\SL_2(\Z)\backslash\H^2$ by balls around $\SL_2(\Z)\backslash\SL_2(\Z[1/3])$. The height of the points is bounded by $3$, and the balls are of radius $3^{-3}$.}
    \label{fig:density}
\end{figure}


\subsection{Outline of the proof}

We start by describing the work of Ghosh--Gorodnik--Nevo in \cite{ghosh2018best} in our setting, namely, on $\X:=\SL_n(\Z)\backslash \SL_n(\R)/\SO_n(\R)$. Their work crucially relies on the existence and, in fact, an explicit description of the \emph{spectral gap} for a certain averaging operator (i.e., \emph{a quantitative mean ergodic theorem}) on a certain homogeneous space arising from a general reductive group. In our case, the situation is simpler and the relevant operator turns out to be the (adjoint) Hecke operator $T^*(p^{nk})$, for certain $k$, acting on $L^2(\X)$. By a standard duality theorem for Hecke operators \cite{clozel2001hecke}, the action reduces to an operator from the Hecke algebra of $\SL_n(\Q_p)$ on $L^2(\SL_n(\Z[1/p])\backslash \SL_n(\R)\times \SL_n(\Q_p))$.

One can relate the spectral gap of this Hecke operator to the Diophantine exponent (see \cref{thm:GGN not intro}). The spectral gap, using the theory of spherical functions, can be determined by a certain \emph{integrability exponent}, which is parameterized by a number $2\le q\le \infty$ (see \cref{prop: GGN spectral gap}). When the integrability exponent is $q=2$ (alternatively, an underlying representation is \emph{tempered}, see \cref{sec:GGN}) one gets the optimal Diophantine exponent $\kappa=1$. In general, the method of \cite{ghosh2018best} only shows that $\kappa \le q/2$ (see \cref{thm:GGN not intro}).

Assuming $n\ge3$, explicit property $(T)$ implies that $q\le 2(n-1)$ (see the work of Oh \cite{oh2002uniform} for a nice proof). This gives the claimed result that $\kappa \le n-1$. On the other hand, the theory of Eisenstein series implies that $q\ge 2(n-1)$ (see \cite[Theorem~1.5]{clozel2001hecke}). As the spectral gap cannot be further improved, the method in \cite{ghosh2018best} is limited and we need a different approach to improve the upper bound of $\kappa$. 

\vspace{0.5cm}

One of the novelties in our work is to use the full spectral decomposition of $L^2(\X)$ and treat different types of elements of the spectrum separately.
More precisely, we actually analyze the spectral decomposition of $L^2(\X_0)$, where $\X_0 := \PGL_n(\Z) \backslash \H^n$. From our experience, this maneuver usually gives more detailed knowledge of the sizes of the \emph{Hecke eigenvalues} than that is provided by the spectral gap alone. The basic spectral decomposition uses the \emph{theory of Eisenstein Series} due to Langlands (see \cite{moeglin1995spectral}) and provides a decomposition of the form $L^2(\X_0)\cong L^2_{\cusp}(\X_0) \oplus L^2_{\Eis}(\X_0)$. The relevant Hecke operator acts on each part of the spectrum and gives rise to certain Hecke eigenvalues.

The cuspidal part $L^2_{\cusp}(\X_0)$ decomposes discretely into irreducible representations. The Generalized Ramanujan Conjecture (GRC) predicts that all such representations are tempered, i.e. the sizes of the Hecke eigenvalues can be bounded optimally. While the GRC is completely open even for $n=2$, good bounds towards it for each individual representation are known; see \cite{sarnak2005notes}. This bound is used in our unconditional results for $n\ge 4$. However, even for $n=2$ and assuming the best-known bounds, we are unable to reach $\kappa=1$.

To overcome this problem, we notice that one does not need optimal bounds for individual Hecke eigenvalues, but \emph{optimal bounds on average}. In general, Sarnak's Density Hypothesis (see \cite{sarnak1991bounds,sarnak1991diophantine}) predicts (in a slightly different setting) that the GRC should hold on average for a nice enough family of automorphic representations. In our conditional result, we assume a certain form of the density hypothesis, namely \cref{conj:density conj} which can be realized as a higher rank analogue of Sarnak's Density Hypothesis (see \cref{lem:density conjecture equivalence} and discussion there), and apply to our question. This approach was already used in different contexts to deduce results of a similar flavor (see \cite{sarnak2015lettermiller,bordenave2021cutoff,golubev2019cutoff,golubev2020sarnak}). The version that is relevant for us can be realized as \emph{Density relative to the Weyl's law}.
We refer to \cref{subsec:GRC and density} for a complete discussion. This density property is known for $n=2$ and $n=3$ by the work of Blomer \cite{blomer2013applications} and Blomer--Buttcane--Raulf \cite{blomer2014sato}, respectively; see \cref{subsec:GRC and density}.
\begin{remark}
Recently, Assing and Blomer in \cite[Theorem 1.1]{assing2022density} proved Sarnak's density hypothesis in a non-archimedean aspect, namely for the automorphic forms for principal congruence subgroups of square-free level. As an application they solved a related problem in \cite[Theorem 1.5]{assing2022density} namely, \emph{optimal lifting} for $\SL_n(\Z/q\Z)$ with square-free $q$, conditional on a hypothesis \cite[Hypothesis 1]{assing2022density} about certain local $L^2$-bounds of the Eisenstein series; see also \cite[Theorem 4, \S8]{jana2022eisenstein-average}.
\end{remark}

Dealing with the Eisenstein part $L^2_{\Eis}(\X_0)$ is less complicated arithmetically than the cuspidal spectrum -- the size of the Hecke eigenvalues can be understood inductively using the results of M{\oe}glin and Waldspurger \cite{moeglin1989residual}. However, the Eisenstein part is more complicated analytically, because of its growth near the cusp. 
This problem can be regarded as a Hecke eigenvalue weighted Weyl's law. Similar to the proofs in \cite{miller2001existence,muller2007weyl} we need to show that the contribution of the Eisenstein part is small compared to the cuspidal spectrum. 
The problem is non-trivial due to the weights coming from the Hecke eigenvalues, which can be quite large for the non-tempered part of the Eisenstein spectrum. We show that the largeness of the Hecke eigenvalues for the non-tempered automorphic forms is compensated by a \emph{low cardinality} of such forms.

The exact result that we need is estimates on the \emph{$L^2$-growth of Eisenstein series in compact domains}, see \cref{subsec:L2 bounds on Eisenstein} for a formulation. This result was proved by Miller in \cite{miller2001existence} as the main estimate in his proof of Weyl's law for $\SL_3(\Z)$, but was open for $n\ge 4$. In a companion paper \cite{jana2022eisenstein-average} we solve this problem; see \cref{subsec:L2 bounds on Eisenstein}.

\subsection{Generalizations and Open Problems}
The questions in this work can be generalized to other groups, by replacing the underlying group $\SL(n)$ by another semisimple simply connected algebraic group. Without giving the full definitions, we expect that the Diophantine exponent $\kappa$ will be optimal, even without the presence of an optimal spectral gap (compare the discussion in \cite[Remark~3.6]{ghosh2018best}).
Our proof certainly generalizes to $\SL_m(D)$, when $D$ is a division algebra over $\Q$. 

One can also wonder about the Diophantine exponents when $\H^n$ is replaced with $\SL_n(\R)$, with some left-invariant Riemannian metric. The methods of this paper can, in principle, be used for this problem as well, but the spectral decomposition, as well as other analytical problems, are more complicated due to the absence of sphericality, and we were not yet able to overcome them. However, one can show that in this situation $\kappa(x_0)=\kappa$, since we have a right $\SL_n(\R)$-action (the metric is not right-invariant, so this is not completely trivial).

\subsection{Structure of the article}
In \cref{sec:GGN} we explain how our question is related to the work of Ghosh--Gorodnik--Nevo, and prove \cref{thm:GGN}.

In \cref{sec:local preliminaries} we discuss the relevant local groups and their (spherical) representation theory.

In \cref{sec:global preliminaries} we discuss the global preliminaries that we need, and in particular discuss Hecke operators, Langlands spectral decomposition, and the description of the spectrum. In \cref{subsec:GRC and density} we discuss the Density Conjecture that we require for this work, and in \cref{subsec:L2 bounds on Eisenstein} we discuss $L^2$-bounds on Eisenstein series in compact domains.

In \cref{sec:reduction to spectral} we reduce the study of Diophantine exponents to a certain analytic problem (cf. \cref{lem:kappa from L2 global convex}), in the spirit of \cite{ghosh2018best}. 

In \cref{sec:applying spectral decomposition} we apply Langlands spectral decomposition and \cref{thm:maass-selberg} to reduce the spectral problem to a combinatorial problem.  

In \cref{sec:proof of main theorem} we complete the proof of \cref{thm:kappa theorem intro} and \cref{thm:optimal-SDH}.

Finally, in \cref{sec: local exponents n=3} we prove \cref{thm:local-kappa-bound}.

\subsection{Notation}\label{subsec:notations}

The notations $\ll,\asymp,\gg, O,o$ are the usual ones in analytic number theory: for a master parameter $T\to\infty$ and $A,B$ depending on $T$ implicitly we say $A\ll B$ (equivalently, $A=O(B)$, and $B\gg A$) if there is a constant $c$ such that $A\le cB$ for $T$ sufficiently large. We write $A\asymp B$ if $A\ll B\ll A$. The implied constants may depend on $n$ and $p$, without mentioning it explicitly. Also as usual in analytic number theory, $\delta$ and $\eta$ (but not $\varepsilon$) will denote arbitrary small but fixed positive numbers, whose actual values may change from line to line.

\subsection{Acknowledgements}
The authors thank Amos Nevo for various discussions and detailed comments about a previous version of this work and thank Peter Sarnak for his motivation. The first author thanks Shreyasi Datta for numerous fruitful discussions on various aspects of Diophantine approximation related to this work, and in general. The first author also thanks MPIM Bonn where most of the work was completed during his stay there. The second author is supported by the ERC under the European Union's Horizon 2020 research and innovation program (grant agreement No. 803711). Finally, we thank the anonymous referee for the helpful comments which made the exposition of the paper a lot better.

\section{The Setting of Ghosh--Gorodnik--Nevo}\label{sec:GGN}

The goal of this section is to explain how our question fits into the general framework studied by Ghosh--Gorodnik--Nevo in a sequence of works \cite{ghosh2013aut,ghosh2014metric,ghosh2015diophantine,ghosh2018best}. Consequently, we give a sketch of the argument which leads to a proof of \cref{thm:GGN}.
In this section we follow \cite[Section~2]{ghosh2018best} and use its notations, which are not the same as the rest of this work, to allow a simple comparison.

Let $G := \SL_n(\R) \times \SL_n(\Q_p)$ and $H := \SO_n(\R) \times \SL_n(\Q_p) < G$. Also let $\Gamma := \SL_n(\Z[1/p])$, which we consider as embedded diagonally in $G$. Notice that $\Gamma$ is a lattice in $G$.

Let $X:= G/H\cong \SL_n(\R)/\SO_n(\R) \cong \H^n$. Also let $\dist$ be the natural Riemannian metric on $X$, coming from the Killing form on the Lie algebra of $\SL_n(\R)$. Notice that the action of $G$ on $G/H$ preserves this metric. We fix natural Haar measures $m_G,m_H,m_X=m_{G/H}$ on $G,H,X$, respectively.

We define $D\colon \SL_n(\Q_p)\to \R_{\ge 0}$ by 
\[D(g_p) = \log(p) \Ht(g_p) = \log(p) \min\left\{k\in \Z_{\ge 0}\mid p^k g_p \in \Mat_n(\Z_p)\right\}.\]
We extend $D\colon G \to \R_{\ge 0}$ by $D(g_\infty,g_p) := D(g_p)$ and denote $|g|_D := e^{D(g)}$, which in our case is simply the $p$-adic valuation of $g_p$.

\begin{defn}[\cite{ghosh2018best}, Definition~2.1]\label{def: diophantine exponenets GGN}
Given $x,x_0 \in X$, the \emph{Diophantine exponent} $\kappa_1(x,x_0)$ is the infimum over $\zeta$, such that there is an $\varepsilon_0=\varepsilon(x,s_0,\zeta)$ with the property that for every $\varepsilon<\varepsilon_0$ there is a $\gamma\in \Gamma$ satisfying 
\begin{align*}
    \dist( \gamma^{-1}x,x_0) \le \varepsilon\text{    and    }
    |\gamma|_D\le \varepsilon^{-\zeta}.
\end{align*}
\end{defn}

Let us compare \cref{def: diophantine exponenets GGN} and \cref{def:Diopnatine exponents intro}. The inequality
$|\gamma|_D = p^{\Ht(\gamma)}\le \varepsilon^{-\zeta}$
is equivalent to the inequality
$\Ht(\gamma) \le \zeta \log_p(\varepsilon^{-1})$.
Therefore, 
\[\kappa(x,x_0) = \frac{2n}{n+2}\kappa_1(x,x_0).\] 

We claim that the choices of $G$, $H$, $\dist$, and $D$ satisfy the Assumptions 1 - 4 of \cite[Section~2]{ghosh2018best}. First, $\dist$ is $G$-invariant so \cite[Assmption~1]{ghosh2018best} holds (in fact, \cite{ghosh2018best} requires a weaker ``coarse metric regularity'' condition). It also holds that for $\varepsilon$ sufficiently small, 
\[m_{X}(\{x\in X\mid \dist(x,x_0)<\varepsilon\}) \asymp \varepsilon^{d},\]
where $d:=\dim(X)= \dim(\SL_n(\R)/\SO_n(\R)) = (n+2)(n-1)/2$. Therefore, \cite[Assmption~4]{ghosh2018best} holds with local dimension $d$.
Next, it directly follows from the definition that $D(g_1g_2)\le D(g_1)+D(g_2)$, which means that $D$ is subadditive. While \cite[Assmption~2]{ghosh2018best} requires a weaker ``coarse norm regularity" condition for $D$ to hold. 

Given $t\ge0$, the set $H_t = \{h\in H \mid D(h) \le t\}$ is of finite Haar measure and moreover, a simple calculation (see \cref{subsec:bounds on spherical functions}) shows that 
\[m_H(H_t) \asymp e^{n(n-1)t}.\]
Therefore, \cite[Assmption~3]{ghosh2018best} is satisfied with an explicit exponent $a:= n(n -1)$.
\begin{remark}
Our analysis is slightly different than \cite{ghosh2018best}, since the set $G_t = \{g\in G \mid D(g) \le t\}$ is not of finite Haar measure, as assumed in \cite{ghosh2018best}. There are two ways to overcome this minor difference. The first is to ignore it, since this assumption is not used in the proof of \cite{ghosh2018best}. Alternatively, One may define a different metric by 
\[
D_1(g_\infty,g_p):= D(g_p)+\log\|g_\infty\|,
\]
where $\|\cdot \|$ is some submultiplicative matrix norm on $M_n(\R)$.

Then it is not hard to show changing $D$ to $D_1$ will give the same Diophantine exponent, and $G_{t,1} = \{g\in G \mid D_1(g) \le t\}$ will be of finite Haar measure. However, the relation between $D_1$ and Hecke points is less transparent, so we prefer to work with $D$ instead.
\end{remark}

In \cite{ghosh2018best} the authors made the following observation: $\kappa(x,x_0)$ is a $\Gamma\times \Gamma$-invariant function, and $\Gamma$ acts ergodically on $X$, and therefore for every $x_0\in X$ there are constants $\kappa_L(x_0)$ and $\kappa_R(x_0)$ such that $\kappa_L(x_0)=\kappa(x_0,x)$ and $\kappa_R(x_0)=\kappa(x,x_0)$ for almost every $x$. Similarly, there is a constant $\kappa$ such that $\kappa= \kappa(x,x_0)$ for almost every $x,x_0\in X$. 

We have the following lower bound of the Diophantine exponent $\kappa_1(x,x_0)$.
\begin{prop}[\cite{ghosh2018best}, Theorem~3.1]
For every $x_0\in X$ and for almost every $x\in X$, $\kappa_1(x,x_0)\ge d/a = \frac{n+2}{2n}$, or alternatively $\kappa(x,x_0)\ge 1$.

Therefore, for every $x_0\in X$, $\kappa_R(x_0)\ge 1$ and $\kappa\ge 1$. 
\end{prop}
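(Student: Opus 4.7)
The plan is a Borel--Cantelli-type volume/counting argument. One works on the finite-volume quotient $\X := \SL_n(\Z)\backslash \H^n$, since the distance $\dist$ and the height $\Ht$ are both left-invariant under $\SL_n(\Z)$ and the claim is insensitive to this reduction. Two quantitative inputs drive the estimate: the local volume growth $m_\X(B(x_0,\varepsilon)) \asymp \varepsilon^d$ with $d=(n+2)(n-1)/2$, and the Hecke orbit count $|\{\SL_n(\Z)\gamma x_0 : \Ht(\gamma)\le k\}| \asymp p^{kn(n-1)}$, i.e.\ $\asymp T^{a}$ with $T=p^k$ and $a=n(n-1)$. The latter is the reformulation via $R(1)\backslash R(p^{kn})$ from the almost-covering subsection and is equivalent to the $H$-ball volume growth $m_H(H_t)\asymp e^{at}$ underlying Assumption~3.

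Fix $x_0\in\X$ and any real number $\zeta < d/a = (n+2)/(2n)$. For each integer $k\ge 1$, set
\[
A_k := \bigl\{x\in\X : \exists\,\gamma\in\SL_n(\Z[1/p])\text{ with }\dist(\gamma^{-1}x,x_0)\le p^{-k}\text{ and }|\gamma|_D\le p^{k\zeta}\bigr\}.
\]
The height cap forces $\Ht(\gamma)\le k\zeta$, so at most $\ll p^{k\zeta a}$ distinct Hecke points $\SL_n(\Z)\gamma x_0$ contribute, and $A_k$ lies in the union of the $p^{-k}$-balls around those points. Consequently
\[
m_\X(A_k)\ll p^{k\zeta a}\cdot p^{-kd} = p^{-k(d-\zeta a)},
\]
and because $\zeta a < d$ the series $\sum_k m_\X(A_k)$ converges. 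Borel--Cantelli then yields $m_\X(\limsup_k A_k)=0$.

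For any $x$ outside this null set there is some $k_0$ with $x\notin A_k$ for all $k\ge k_0$, producing a sequence $\varepsilon_k=p^{-k}\to 0$ at which no admissible $\gamma$ exists. Were $\kappa_1(x,x_0)<\zeta$, one could pick $\zeta_1\in (\kappa_1(x,x_0),\zeta)$ and for $k$ large find a $\gamma$ with $\dist(\gamma^{-1}x,x_0)\le \varepsilon_k$ and $|\gamma|_D\le \varepsilon_k^{-\zeta_1}\le p^{k\zeta}$, contradicting $x\notin A_k$. Hence $\kappa_1(x,x_0)\ge \zeta$ for almost every $x$; intersecting null sets over a sequence $\zeta\nearrow d/a$ promotes this to $\kappa_1(x,x_0)\ge d/a$, equivalently $\kappa(x,x_0)\ge 1$, a.e. The inequalities for $\kappa_R(x_0)$ and $\kappa$ are then immediate from the ergodicity-based identification noted just before the proposition.

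The only delicate point is the Hecke-orbit count: one needs to verify that modulo $\SL_n(\Z)$ the elements $\gamma\in\SL_n(\Z[1/p])$ with $\Ht(\gamma)\le k$ surject with bounded multiplicity onto $R(1)\backslash R(p^{kn})$, whose cardinality is $\asymp p^{kn(n-1)}$ by a standard divisor-sum estimate. With this in hand everything else is routine, and one sees that the exponent $d/a$ matches the ball-volume growth against the Hecke-orbit growth, which is exactly why the artificial normalisation $(n+2)/(2n)$ appears in the definition of $\kappa$ and makes $1$ the correct lower threshold.
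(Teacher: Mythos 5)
Your argument is correct and is the standard way to make the paper's volume heuristic rigorous: the paper's sketch in the ``Almost-covering'' subsection is exactly the one-scale version of the count you carry out (ball volume $\varepsilon^d$ against Hecke-orbit count $\asymp p^{k\,n(n-1)}$), and the Borel--Cantelli step is what GGN's Theorem~3.1 uses to upgrade that to an a.e.\ statement. One small phrasing slip is harmless but worth fixing: you say the Hecke points ``surject with bounded multiplicity'' onto $R(1)\backslash R(p^{kn})$, whereas what the argument needs (and what is true) is the opposite inclusion -- the map $\SL_n(\Z)\gamma \mapsto R(1)\,p^k\gamma$ injects the relevant cosets into $R(1)\backslash R(p^{kn})$, so the number of distinct Hecke points $\SL_n(\Z)\gamma x_0$ is \emph{at most} $|R(1)\backslash R(p^{kn})|\asymp p^{kn(n-1)}$, which is precisely the upper bound you use for $m_\X(A_k)$.
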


We gave a sketch of the proof of this proposition in the introduction. We remark that \cite[Theorem~3.1]{ghosh2018best} actually show that for every $x_0\in X$ it holds that $\kappa_L(x_0)\ge 1$, but the above statement follows either by the same arguments, or by replacing $D$ with $D'$, where $D'(g)=D(g^{-1})$.

To present upper bounds, we consider the right action of $H$ on $Y:=\Gamma \backslash G$, and we endow $Y$ with the natural finite Haar measure coming from $G$. 
Let $\beta_t\in C_c(H)$, $\beta_t = \frac{\One_{H_t}}{m_H(H_t)}$ be the normalized characteristic function of $H_t$. We consider the operator of $\pi_Y(\beta_t)$ on $L^2(Y)$, defined by 
\[
(\pi_Y(\beta_t)f) (y):= \frac{1}{m_H(H_t)}\intop_{H_t} f(yh) \d m_H(h).
\]
In this case, $\pi_Y(\beta_t)$ can, in fact, be interpreted as a certain spherical Hecke operator for which we have the following mean ergodic theorem. 

For any measurable space $X$ we denote 
\[L^2_0(X) := \left\lbrace f \in L^2(X)\mid \intop_X f(x) \d x = 0\right\rbrace.\]

\begin{prop}[\cite{ghosh2013aut}, Theorem~4.2]\label{prop: GGN spectral gap}
There is an explicit $q=q(n)>0$ such that as an operator on $L^2_0(Y)$
\[
\|\pi_Y(\beta_t)\|_{\mathrm{op}} \ll_\delta m_H(H_t)^{-q^{-1}+\delta},
\]
for every $\delta>0$.
\end{prop}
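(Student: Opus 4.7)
The plan is to reduce the norm estimate on $\pi_Y(\beta_t)$ to a spherical-function bound on the $p$-adic factor. I would first note that $\beta_t$ is bi-invariant under $K_p := \SL_n(\Z_p)$, hence lies in the spherical Hecke algebra of $\SL_n(\Q_p)$. Disintegrating $L^2_0(Y)$ into irreducible unitary $G$-representations $\pi = \pi_\infty \otimes \pi_p$, the operator norm becomes the essential supremum of $\|\pi_p(\beta_t)\|_{\op}$ over all $p$-local components appearing in $L^2_0(Y)$, and only spherical $\pi_p$ contribute.

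On each such $\pi_p$, the operator $\pi_p(\beta_t)$ vanishes off the line of $K_p$-fixed vectors and acts there by the scalar
\[
\widehat{\beta_t}(\pi_p) = \frac{1}{m_H(H_t)}\int_{H_t}\varphi_{\pi_p}(h)\,\d m_H(h),
\]
where $\varphi_{\pi_p}$ is the zonal spherical function of $\pi_p$. The task thus reduces to estimating $|\varphi_{\pi_p}(h)|$ on average over $H_t$.

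The crucial arithmetic input is an integrability exponent: every nontrivial $\pi_p$ appearing in the decomposition of $L^2_0(Y)$ has $K_p$-finite matrix coefficients in $L^{q+\delta}(\SL_n(\Q_p))$ for an explicit $q$. For $n \ge 3$ this is Oh's uniform property $(T)$ estimate \cite{oh2002uniform}, giving $q = 2(n-1)$; for $n = 2$ the analogous estimate follows from Kim--Sarnak-type bounds towards the Ramanujan conjecture. The Cowling--Haagerup--Howe comparison principle then upgrades this to the pointwise bound $|\varphi_{\pi_p}(h)| \ll \Xi(h)^{2/q}$, where $\Xi$ is the Harish-Chandra spherical function of $\SL_n(\Q_p)$.

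Substituting and invoking the Cartan decomposition of $\SL_n(\Q_p)$---under which $H_t$ corresponds to dominant cocharacters of bounded $p$-adic valuation and $\Xi$ has an explicit product expression in terms of the modulus character---a standard Weyl-integration computation (carried out in \cref{subsec:bounds on spherical functions}) yields
\[
\frac{1}{m_H(H_t)}\int_{H_t}\Xi(h)^{2/q}\,\d h \ll_\delta m_H(H_t)^{-1/q+\delta},
\]
which is the claimed bound. The main obstacle is securing the explicit integrability exponent $q$; once that is in hand, the remaining spherical-function integral is a routine Cartan-decomposition calculation and the spectral reduction is purely formal.
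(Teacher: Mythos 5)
Your outline is correct and it reconstructs the argument from the cited source: the paper itself defers the proof entirely to \cite{ghosh2013aut}, Theorem~4.2, and the chain you describe---bi-$K_p$-invariance of $\beta_t$ forcing the operator to see only the spherical line, the integrability exponent $q$ (Oh's uniform property $(T)$ for $n\ge 3$, automorphic bounds for $n=2$), the Cowling--Haagerup--Howe upgrade to $|\varphi_{\pi_p}|\ll \Xi^{2/q}$, and the Cartan-decomposition computation giving $m_H(H_t)^{-1/q+\delta}$---is exactly the proof that reference gives. The only point worth flagging is that CHH as usually stated requires $q/2$ to be an integer, so for non-even $q$ one needs the refinement (present in Oh's work) to avoid a small loss in the exponent; in this paper's application $q=2(n-1)$ is always even for $n\ge 3$, so this does not cause trouble.
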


The value $q$ in the above proposition is the \emph{integrability exponent} for the action of $H$ on $L^2_0(Y)$. The integrability exponent $q$ is the infimum over $q'$ such that the $K_H$-finite matrix coefficients are in $L^{q'}(H)$, where $K_H$ is a maximal compact subgroup of $H$.
We have the following results on the integrability exponent.
\begin{enumerate}
    \item For $n=2$, using Kim--Sarnak bound towards the Generalized Ramanujan Conjecture (see \cite{sarnak2005notes}), one can take $q = 64/25$. Assuming GRC, we have $q=2$.
    \item For $n\ge 3$, using explicit property $(T)$ from \cite{oh2002uniform} we have $q = 2(n-1)$. Moreover, this choice of $q$ is the best possible.
\end{enumerate}
Using these bounds on the integrability exponents the following result is proved in \cite{ghosh2018best}.

\begin{theorem}[\cite{ghosh2018best}, Theorem~3.3]\label{thm:GGN not intro}
For every $x_0 \in X$ and almost every $x \in X$,
\[
\kappa_1(x,x_0)\le \frac{qd}{2a}.
\]
Therefore, $\kappa_R(x_0)\le q/2$, and consequently, $\kappa \le q/2$.
\end{theorem}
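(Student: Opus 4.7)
The plan is to translate the Diophantine approximation question into a quantitative equidistribution statement for the averaging operators $\pi_Y(\beta_t)$ on $L^2(Y)$, and then apply \cref{prop: GGN spectral gap}. Since the claim about $\kappa$ follows from the statement about $\kappa_R(x_0)$ by Fubini, it suffices to show that for every fixed $x_0\in X$, every $\zeta>qd/(2a)$, and almost every $x\in X$, there exists $\gamma\in\Gamma$ satisfying $\dist(\gamma^{-1}x,x_0)\le\varepsilon$ and $|\gamma|_D\le\varepsilon^{-\zeta}$ for all sufficiently small $\varepsilon$. Fixing $x_0$ and $\zeta$, we set $t=t(\varepsilon):=\zeta\log(\varepsilon^{-1})$, so that the height constraint is exactly $D(\gamma)\le t$.

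Next, I would set up the correspondence between the number of good $\gamma$'s and a spatial average on $Y$. Parametrize $x=g\cdot x_0$ by $g=(g_\infty,e)\in \SL_n(\R)\times\{e\}\subset G$. A would-be witness $\gamma\in\Gamma$ is then encoded via the factorization $\gamma^{-1}g=hu$, with $h\in H$ and $u$ in a small neighborhood $U_\varepsilon$ of the identity transversal to $H$ of $m_G$-measure $\asymp\varepsilon^d$. Under this decomposition the distance condition becomes $u\in U_\varepsilon$, and since $g$ and (approximately) $u$ have trivial $p$-adic component, the height condition $D(\gamma)\le t$ translates to $h\in H_{t+O(1)}$. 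Choosing a nonnegative bump $\psi_\varepsilon\in C_c(Y)$ concentrated in a neighborhood of the identity coset, normalized so that $\int_Y\psi_\varepsilon=1$ and $\|\psi_\varepsilon\|_{L^2(Y)}^{2}\asymp\varepsilon^{-d}$, a standard unfolding shows that the number $N_\varepsilon(x)$ of good $\gamma$'s is, up to bounded multiplicative constants, equal to $m_H(H_t)\,(\pi_Y(\beta_t)\psi_\varepsilon)(\Gamma g)$.

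Applying \cref{prop: GGN spectral gap} to the mean-zero projection of $\psi_\varepsilon$ gives
\[
\left\|\pi_Y(\beta_t)\psi_\varepsilon-\tfrac{1}{m(Y)}\right\|_{L^2(Y)}^{2}\ll m_H(H_t)^{-2/q+\delta}\|\psi_\varepsilon\|_{L^2}^{2}\ll \varepsilon^{2a\zeta/q-d-\delta'},
\]
using $m_H(H_t)\asymp e^{at}$. Chebyshev's inequality then bounds the measure of the exceptional set $E_\varepsilon=\{y\in Y: (\pi_Y(\beta_t)\psi_\varepsilon)(y)<(2m(Y))^{-1}\}$ by the same quantity. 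For $\zeta>qd/(2a)$ the exponent $2a\zeta/q-d$ is positive, so $\sum_n m(E_{\varepsilon_n})<\infty$ along the geometric sequence $\varepsilon_n=2^{-n}$. Borel-Cantelli then gives that for almost every $y\in Y$, and hence for almost every $x\in X$, $N_{\varepsilon_n}(x)>0$ for all $n\ge n_0(x)$; a monotonicity argument comparing consecutive scales extends this to all small $\varepsilon$ at the cost of letting $\zeta$ tend down to $qd/(2a)$. This yields $\kappa_1(x,x_0)\le qd/(2a)$, hence $\kappa_R(x_0)\le q/2$, and then $\kappa\le q/2$.

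The main obstacle is the technical bookkeeping in the second step. The correspondence between $\gamma\in\Gamma$ and decompositions $(h,u)$ is only bijective up to a controlled overlap, and the identity $D(\gamma)=D(h)$ holds only approximately due to the contribution of $u$ — this is absorbed into the ``coarse norm regularity'' of $D$ verified earlier. One must also carefully track the factorization of Haar measure under $G\approx U_\varepsilon H$ to match the expected count $m_H(H_t)\cdot m_G(U_\varepsilon)/m(Y)\asymp e^{at}\varepsilon^d$ with the spectral side. The genuinely analytic input, namely the quantitative spectral gap for $\pi_Y(\beta_t)$, is entirely encapsulated in \cref{prop: GGN spectral gap}.
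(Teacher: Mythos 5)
Your proposal correctly reconstructs the Ghosh--Gorodnik--Nevo argument that the paper cites for this theorem (and that it essentially repackages in Section~5 via Hecke operators on $\X$ and \cref{lem:kappa from L2 local}): translate the Diophantine condition into positivity of $\pi_Y(\beta_t)\psi_\varepsilon$, feed the spectral gap of \cref{prop: GGN spectral gap} into Chebyshev, and finish with Borel--Cantelli along a geometric sequence of scales. The bookkeeping you flag at the end (the exact multiplicative normalization relating $N_\varepsilon(x)$ to $m_H(H_t)\pi_Y(\beta_t)\psi_\varepsilon$, and passing from almost-every $y\in Y$ to almost-every $x\in X$ via right $K_p$-invariance of the relevant functions) is indeed where the real care is required, but the analytic skeleton you describe is the correct one and matches the paper's.
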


This recovers \cref{thm:GGN} from the results of \cite{ghosh2018best}.

\subsection{Diophantine Exponents on the sphere}\label{subsec:sphere}
This subsection is independent of the rest of the article, and serves to discuss the difference between $\kappa$ and $\kappa(x_0)$.

The possible difference between $\kappa$ and $\kappa(x_0)$ has similar origins as the failure of temperedness, and also the failure of optimal $L^\infty$-bounds -- embedding of a homogeneous orbit of a subgroup in the space. For example, when $x_0=I$, there is a homogeneous orbit of $\SL_{n-1}(\Z)\backslash \SL_{n-1}(\R) \subset \SL_n(\Z) \backslash \SL_n(\R)$, and many points of the Hecke orbit of $\SL_n(\Z[1/p])$ around $I$ belong to the image of this homogeneous orbit in $\X$. It seems that this concentration is not dramatic enough to change $\kappa(I)$, but for other groups, this may happen. The goal of this subsection is to give an example with $\SL(n)$ replaced by $\SO(n)$.


Let $n\ge 5$ and $\SO(n)$ be the algebraic group which is the stabilizer of the quadratic from $Q(x_1,\dots ,x_{n}):= x_1^2+\dots +x_{n}^2$. 

Replacing $\SL(n)$ by $\SO(n)$, one can study the equidistribution of $\SO_{n+1}(\Z[1/p])$ in $\SO_{n+1}(\R)$. This will help up explain the difference between $\kappa(x_0)$ and $\kappa$, hinted at in the introduction. For technical reasons, we restrict to $p=1\mod 4$. 

We let $G:=\SO_{n+1}(\R)\times \SO_{n+1}(\Q_p)$, $H = \SO_{n}(\R)\times \SO_{n+1}(\Q_p)$, and $\Gamma=\SO_{n+1}(\Z[1/p])$ a lattice in $G$. It holds that $X:=G/H\cong S^n$ and we let $\dist$ be an $\SO_{n+1}(\R)$-invariant Riemannian metric on $X$. We define $D$ and the Diophantine exponent $\kappa_1(x,x_0)$ as above. The relevant dimension is $d=n$, and the set $H_t = \{h\in H\mid D(h)\le t\}$ satisfies 
\[
m_H(H_t) \asymp e^{at},
\]
where
\[ a= 
\begin{cases}
n^2/4 & n\text{ even} \\
(n+1)(n+3)/4 & n\text{ odd}
\end{cases}.
\]
We deduce that for every $x_0\in X$ and almost every $x\in X$ it holds that $\kappa_1(x,x_0)\ge d/a$. The arguments of \cite{ghosh2018best} imply that for every $x_0\in X$ and almost every $x\in X$, it holds that $\kappa_1(x,x_0)\le \frac{qd}{2a}$, where
\[ q= 
\begin{cases}
n & n\text{ even} \\
n+1 & n\text{ odd}
\end{cases}.
\]

We \emph{conjecture} that for almost every $x,x_0\in X$ it holds that \[\kappa_1(x,x_0)=d/a.\] We plan to pursue this conjecture in a future work, using the methods of this work.

Now consider the point $x_0=e=(1,\dots ,0)\in X$. For this specific point, the cardinality of the set of points of the form $\gamma e$ with $D(\gamma)\le t$ is at most the number of solutions to 
\[x_1^2+\dots +x_{n+1}^2 = p^k,\] 
with $x_i \in \Z$ and $p^k \le e^{2t}$. 
It is standard that the last number is 
\[ \ll_\varepsilon e^{t(n-1+\varepsilon)}.\]

Therefore, by the same arguments, for almost every $x\in X$ it holds that 
\[
\kappa_1(x,e)\ge d/(n-1).
\]
Notice that this is a lot larger than $d/a$.

For $n$ odd, Sardari \cite[Corollary~1.7]{sardari2019siegel} indeed proved that, for almost every $x \in X$ it holds that 
\[
\kappa_1(x,e)= d/(n-1).
\]
The proof uses deep results from automorphic forms to show that the mean ergodic theorem has actually a better spectral gap than given simply by explicit property $(T)$. 

The reader is also referred to \cite{kleinbock2015sphere} for calculation of the Diophantine exponents of the $\SO_{n+1}(\Q)$-action on the sphere, which is proved by a different method, not directly related to the spectral decomposition.

\section{Preliminaries - Local Theory}
\label{sec:local preliminaries}

In this section we describe some results about spherical representations and the spherical transform of $\SL_n(\R)$ and $\SL_n(\Q_p)$. We mainly follow \cite[Section~3]{Duistermaat1979spectra} and \cite[Section~3]{lindenstrauss2007existence} (see also \cite[Section~3]{ghosh2013aut}).

\subsection{Basic Set-up}

For any ring $R$ the group $\GL_n(R)$ denotes the group defined by the invertible elements of the $n\times n$ matrix algebra over $R$, which we call $\Mat_n(R)$. Let $\PGL_n(R)$ be the group $\GL_n(R)/R^\times$.
We have a map of algebraic groups $\GL_n\to\PGL_n$.

We let $v = \infty$ or $v =p$ a prime, and let $\Q_v$ be the corresponding local field, i.e., $\Q_\infty=\R$ or the $p$-adic field $\Q_p$. Let $|\cdot|_v$ be the usual valuation, i.e. $|x|_\infty = |x|$ for $x\in\R$, and $|p^l z|_p=p^{-l}$ for $z\in \Z_p^\times$.

Let $G = G_v := \PGL_n(\Q_v)$. If it is clear from the context we will drop $v$ from the notation. Let $P$ be the subgroup of upper triangular matrices, $N$ be the subgroup of upper triangular unipotent matrices, and $A$ be the subgroup of diagonal matrices. We have $P=NA=AN$. Let $K$ be the standard maximal compact subgroup of $G$, i.e., $K=K_\infty:=\PO_n(\R)$ when $v=\infty$ and $K=K_p:=\PGL_n(\Z_p)$ when $v = p$.
We have the Iwasawa decomposition $G = PK$. When $v=\infty$ denote $\dim(G/K)$ by $d$ whose value is $d= \frac{(n-1)(n+2)}{2}$.

We normalize the Haar measure $m=m_G$ on $G$ as in \cite{lindenstrauss2007existence}. In particular, we give $K$ Haar measure $1$. If $v=p$ this normalization uniquely defines the Haar measure on $G$. 
If $v=\infty$, the Killing form on the Lie algebra $\g$ of $G$ defines an inner product on the tangent space of $G/K$, and defines a metric and measure on $G/K$. This uniquely defines the Haar measure on $G$.

Let $A^+ \subset A$ be the set consisting of the projection to $\PGL(n)$ of the elements of the following form:
\begin{itemize}
    \item When $v = \infty$,  $A^+:=\{\diag(a_1,\dots ,a_n)\mid a_1\ge \dots\ge a_n>0\}$.
    \item When $v = p$, $A^+ := \{\diag(p^{l_1},\dots ,p^{l_n})\mid l_1  \le \dots \le l_n\}$.
\end{itemize} 
We have the Cartan decomposition $G = K A^+ K$. 

We let $\a := \{x=(x_1,\dots ,x_n)\in \R^n\mid \sum x_i =0\}$ be the coroot space of $\PGL(n)$. There is a natural map $A\to \a$, given by $\diag(a_1,\dots ,a_n)\mapsto (\log(|a_1|_v),\dots ,\log(|a_n|_v))$ and further normalized to have sum $0$.

Notice that $\a$ is the same space for all $v$. 
We give $\a$ an inner product using the case $v = \infty$. We identify $\a \le \g$ as the Lie algebra of the connected component of the identity in $A$ and let the inner product on $\a$ be the restriction of the Killing form to it. Thus when $v=\infty$ the set $B_b := K\{\exp \alpha\mid\alpha\in \a, \n{\alpha}\le b\}K$ is the ball of radius $b$ in $G / K$ around the identity.
The inner product allows us to identify $\a$ with its dual $\a^*$. We let $\a_{\C}^* = \a^*\otimes_\R \C$, with the natural extension of the inner product.   

For every  $\mu=(\mu_1,\dots ,\mu_n)\in \a_\C^*$, we associate a character $\chi_\mu \colon P\to\C$ by 
\[\chi_\mu(na)=\chi_\mu(a):=\prod_{i=1}^n |a_i|_{v}^{\mu_i},\]
for $a=\diag(a_1,\dots ,a_n)\in A$ and $n\in N$.
Let 
\[\rho := ((n-1)/2,(n-3)/2,\dots ,-(n-1)/2) \in \a_\C^*\] be the half sum of the positive roots. It holds that $\Delta = \chi_{2\rho}$ is the modular character of $P$.

We denote the \emph{Weyl group} of $G$ by $W$ which is isomorphic to the permutation group of $S_n$. 

\subsection{Spherical Transform}
Given $g \in G$, we let $a(g)$ be its $A$ part according to the Iwasawa decomposition $G = NAK $. 
We define the \emph{spherical function} $\eta_\mu\colon G \to \C$ corresponding to $\mu \in \a_\C^*$ by 
\[
\eta_\mu(g) := \intop_{K}\chi_{\mu+\rho}(a(kg))\d k.
\]
Thus $\eta_\mu$ is bi-$K$-invariant.
Moreover, it can be checked, via a change of variable, that $\eta_\mu$ is invariant under the action of $W$ on $\mu$. We can therefore, without loss of generality, assume that for any $\eta_\mu$ the parameter $\mu$ is dominant, i.e., $\Re(\mu_1)\ge\dots\ge\Re(\mu_n)$. In the $p$-adic case we may also assume that $0\le \Im (\mu_i) < 2\pi /\log(p)$.

The \emph{spherical Hecke algebra} of $G$ is the convolution algebra on $C_c^\infty(K\backslash G / K)$, i.e., the convolution algebra of bi-$K$-invariant compactly supported smooth functions on $G$. For $h \in C_c^\infty(K\backslash G / K)$, we let $\tilde{h}\colon \a_\C^* \to \C$ be the \emph{spherical transform} of $h$, defined by\footnote{The normalization here is from \cite{Duistermaat1979spectra}, which is different from \cite{lindenstrauss2007existence}.}
\[
\tilde{h}(\mu) := \intop_{G} h(g) \eta_\mu(g) \d g.
\]
We have the \emph{spherical Plancherel formula} which states that for $h\in C_c^\infty(K\backslash G / K)$,
\[
\intop_G |h(g)|^2 \d g = \intop_{i\a^*} |\tilde{h}(\mu)|^2 d(\mu) \d\mu,
\]
and \emph{spherical inversion formula} which states that
\begin{equation}\label{eq:spherical inversion formula}
h(g) = \intop_{i\a^*} \tilde{h}(\mu) \overline{\eta_{\mu}(g)} d(\mu) \d\mu.
\end{equation}
Here $d(\mu)$ is a smooth function closely related to the \emph{Harish-Chandra's $c$-function}.
For $v= \infty$, we will need the following estimate (see \cite[Equation~3.4]{lindenstrauss2007existence}) 
\begin{equation}\label{eq:estimate of c function}
d(\mu) \ll (1+\|\mu\|)^{d-(n-1)}=(1+\|\mu\|)^{n(n-1)/2}.   
\end{equation}

\subsection{Spherical Representations}
\label{subsec:spherical representations}

We call an irreducible admissible representation $\pi$ of $G$ \emph{spherical} if $\pi$ has a non-zero $K$-invariant vector. It is well known that such a vector is unique up to multiplication by scalar.

We can construct all admissible irreducible spherical representations of $G$ from the unitarily induced principal series representations. Let $\mu\in\a_\C^*$ and $\ind_{P}^{G}\chi_\mu$ denotes the normalized parabolic induction of $\chi_\mu$ from $P$ to $G$. It is an admissible representation and has a unique irreducible spherical subquotient. Conversely, for any irreducible admissible spherical representation $\pi$ we can find a $\mu_\pi\in\a_\C^*$ such that $\pi$ appears as a unique irreducible subquotient of $\ind_{P}^{G}\chi_{\mu_\pi}$. In this case, we call $\mu_\pi$ to be the \emph{Langlands parameter} of $\pi$; see \cite{sarnak2005notes}. 

Let $\pi$ also be unitary. 
In this case, let $v\in\pi$ be a unit $K$-invariant vector. Then it follows from the definition of the spherical function that the corresponding matrix coefficient $\langle \pi(g) v , v \rangle$ is equal to $\eta_{\mu_\pi}(g)$. 
If $h \in C_c(K \backslash G/K)$ then it holds that
\[\pi(h)v = 
\intop_G h(g) \pi(g) v \d g = \tilde{h}(\mu)v.
\]
This follows from the fact that $\pi(h)v$ is $K$-invariant and therefore a scalar times $v$. This scalar may be calculated by evaluating $\langle \pi(h)v, v\rangle$. If $\mu$ is Langlands parameter of some irreducible, spherical and unitary representation, in particular if $\mu\in i\a^*$, then clearly we have $|\eta_\mu(g)|\le 1$.

Let $Q$ be a standard parabolic subgroup of $G$ attached to the partition $n=n_1+\dots+n_r$. The Levi subgroup $M_Q$ of $Q$ is isomorphic to $\GL(n_1)\times\dots\times\GL(n_r)$ modulo $\GL(1)$. We let $\a_Q^*\cong \{(\lambda_1,\dots ,\lambda_r)\in \R^r\mid\sum_{i=1}^r\lambda_i n_i=0\}$ which is embedded in $\a^*\subset\R^n$, as \[(\lambda_1,\dots ,\lambda_r)\mapsto (\lambda_1,\dots ,\lambda_1,\dots ,\lambda_r,\dots ,\lambda_r),\]
where $\lambda_i$ repeats $n_i$ times.
Similarly, we have $\a_{Q,\C}^*=\a_{Q}^*\otimes \C$ embedded in $\a_{\C}^*$. Given $\lambda=(\lambda_1,\dots ,\lambda_r)\in \a_{P,\C}^*$, it defines a character $\chi_{\lambda}$ of $M_Q$ by \[\chi_{\lambda}(\diag(g_1,\dots ,g_r)) = \prod |\det(g_i)|^{\lambda_i},\quad g_i\in\GL(n_i)\]
Given a spherical representation $\pi$ of $M_Q$ of we construct the representation $\pi_\lambda = \pi\otimes\chi_\lambda$.
We realize $\pi_\lambda$ as an representation of $Q$ by tensoring with the trivial representation of the unipotent radical of $Q$. We denote $\ind_Q^G\pi_\lambda$ to be the normalized parabolic induction. Here the normalization is via the character $\chi_{\rho_Q}$, where $\rho_Q$ is the half sum of the positive roots attached to $Q$. We express the Langlands parameters of $\ind_Q^G\pi_\lambda$ in terms of that of $\pi$, and $\lambda$. 

\begin{lemma}\label{langlands-paramteres-induction}
The Langlands parameters of $\ind_Q^G\pi_\lambda$ are $\mu_\pi+\lambda$.
\end{lemma}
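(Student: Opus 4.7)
The plan is to reduce the statement to the definition of the Langlands parameter via induction in stages. First I would unpack the definition: $\pi$, as a spherical irreducible representation of $M_Q \cong \GL(n_1)\times\cdots\times\GL(n_r)$ (modulo center), is the unique spherical irreducible subquotient of the normalized principal series $\ind_{P\cap M_Q}^{M_Q}\chi_{\mu_\pi}$, where $\mu_\pi\in\a_\C^*$ and the character $\chi_{\mu_\pi}$ of the Borel $P\cap M_Q$ of $M_Q$ is defined exactly as in the earlier discussion for $\GL(n_i)$-factors, embedded block-diagonally in $\a_\C^*$.

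Next I would twist by $\chi_\lambda$. Since $\chi_\lambda$ is a character of $M_Q$, tensoring it with any representation is an exact operation, so $\pi_\lambda=\pi\otimes\chi_\lambda$ is the unique spherical subquotient of
\[
\bigl(\ind_{P\cap M_Q}^{M_Q}\chi_{\mu_\pi}\bigr)\otimes\chi_\lambda.
\]
A direct identification, using the embedding $(\lambda_1,\dots,\lambda_r)\mapsto(\lambda_1,\dots,\lambda_1,\dots,\lambda_r,\dots,\lambda_r)$ of $\a_{Q,\C}^*$ into $\a_\C^*$, shows that restricting $\chi_\lambda$ to the torus $A\subset P\cap M_Q$ simply shifts the character by $\lambda$. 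Hence the above is isomorphic to $\ind_{P\cap M_Q}^{M_Q}\chi_{\mu_\pi+\lambda}$.

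Then I would invoke induction in stages for normalized parabolic induction: because $P=Q\cdot(P\cap M_Q)$ as parabolics and the modular factors satisfy $\Delta_P^{1/2}=\Delta_Q^{1/2}\cdot\Delta_{P\cap M_Q}^{1/2}$ (equivalently $\rho=\rho_Q+\rho_{M_Q}$), one has
\[
\ind_Q^G\bigl(\ind_{P\cap M_Q}^{M_Q}\chi_{\mu_\pi+\lambda}\bigr)\;\cong\;\ind_P^G\chi_{\mu_\pi+\lambda}.
\]
By exactness of normalized parabolic induction, $\ind_Q^G\pi_\lambda$ embeds as a subquotient of $\ind_P^G\chi_{\mu_\pi+\lambda}$, and it has a unique irreducible spherical subquotient because the Iwasawa decomposition $G=QK$ produces a canonical $K$-fixed vector. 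Comparing with the definition of the Langlands parameter given in \cref{subsec:spherical representations}, this forces the Langlands parameter of $\ind_Q^G\pi_\lambda$ to equal $\mu_\pi+\lambda$ (up to the Weyl group action, as always).

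The main point to check carefully, and the only real obstacle, is bookkeeping of the normalization factors: one must verify that the $\rho$-shift used in the normalized induction on $G$ matches the sum of the $\rho_Q$-shift used for $\ind_Q^G$ and the $\rho_{M_Q}$-shift used inside $M_Q$. Once $\rho=\rho_Q+\rho_{M_Q}$ is recorded, the $\lambda$ appearing in $\chi_\lambda$ (which does not involve any $\rho$-twist) and the $\mu_\pi$ attached to $\pi$ via the unnormalized convention simply add to produce the parameter on $G$, yielding the stated identity.
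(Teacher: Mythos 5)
Your proposal is correct and takes essentially the same route as the paper: both rest on transitivity of normalized parabolic induction (``induction in stages'') together with the fact that the normalization shifts satisfy $\rho = \rho_Q + \rho_{M_Q}$, so that the character twist by $\chi_\lambda$ simply translates the parameter. You spell out the bookkeeping of the $\rho$-shifts and the exactness of induction more explicitly than the paper (which treats this as known and proceeds by a two-step induction starting from a maximal parabolic), but there is no real difference in strategy or in what the argument needs.
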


\begin{proof}
Assume first that $Q$ corresponds to an ordered partition $n = n_1 + n_2$. Therefore, the Levi part $M$ of $Q$, modulo its center, is equal to $\PGL_{n_1}(\Q_v)\times \PGL_{n_2}(\Q_v)$. Thus, the spherical representation $\pi$ of $M$ with trivial central character is a tensor product of the representations of $\PGL_{n_1}(\Q_v)$ with Langlands parameters $\mu = (\mu_1,\dots ,\mu_{n_1})$ and $\PGL_{n_2}(\Q_v)$ with Langlands parameters $\mu'= (\mu_1^\prime,\dots ,\mu_{n_2}^\prime)$.

The representation $\ind_{Q}^{G}\pi_\lambda$ has a unique subquotient that is spherical. By the description of the spherical representations above and transitivity of induction, the Langlands parameter of the resulting representation is $\mu'' = (\mu_1,\dots ,\mu_{n_1},\mu_1^\prime,\dots ,\mu_{n_2}^\prime)+\lambda$. The general case follows by an inductive argument.
\end{proof}


\subsection{Bounds on Spherical Functions}
\label{subsec:bounds on spherical functions}

In this subsection we will give uniform bounds on the spherical transform of some spherical functions. We will only need the case when $v=p$ is a prime and assume it for the rest of the subsection.

As the spherical function $\eta_\mu$ is bi-$K$-invariant, the value $\eta_\mu(g)$ depends only on the $A^+$ part of $g$ from the Cartan decomposition. 
We will therefore focus on elements $g\in A^+$, which we will assume to be of the form 
\[
g= \diag(p^{l_1},\dots,p^{l_n}),
\]
with $l_1\le\dots\le l_n$.
As described above, we also assume that $\mu$ is dominant, i.e., $\Re(\mu_1) \ge \dots \ge \Re(\mu_n)$.

We record the following bound from \cite[Lemma~3.3]{ghosh2013aut}.

\begin{lemma}\label{lem:upper bound spherical functions}
Let $g\in A^+$, and $\mu\in \a_{\C}^*$ be dominant. We have 
\[|\eta_\mu(g)| \ll_\delta \chi_{-\rho(1-\delta)+\Re(\mu)}(g),\]
for every $\delta>0$.
\end{lemma}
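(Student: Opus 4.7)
The plan is to first reduce to the case of real dominant $\mu$. For any $a\in A$ and $\lambda\in\a_{\C}^*$ we have $|\chi_\lambda(a)|=\chi_{\Re\lambda}(a)$, since $|a_i|_v>0$ and raising a positive real to an imaginary power only produces a unit-modulus phase. Applied inside the defining integral of $\eta_\mu$, this yields
\[|\eta_\mu(g)| \le \intop_K |\chi_{\mu+\rho}(a(kg))|\d k = \intop_K \chi_{\Re\mu+\rho}(a(kg))\d k = \eta_{\Re\mu}(g).\]
Since the right-hand side of the target inequality depends on $\mu$ only through $\Re\mu$, it suffices to prove the bound assuming $\mu$ is real (and still dominant).

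For real dominant $\mu$ and $a\in A^+$, I would invoke Macdonald's explicit formula for the zonal spherical function on a reductive $p$-adic group, which expresses
\[\eta_\mu(a) = \chi_{-\rho}(a)\sum_{w\in W}\mathbf{c}(w\mu)\,\chi_{w\mu}(a),\]
up to a nonzero normalizing constant, where $\mathbf{c}$ is the Harish-Chandra $c$-function. The key observation is that for $\mu$ in the closed positive Weyl chamber and $a\in A^+$ (so $\log a$ is dominant), one has $\chi_{w\mu}(a)\le\chi_{\mu}(a)$ for every $w\in W$: indeed $\mu-w\mu$ is a non-negative integer combination of positive roots, and positive roots pair non-negatively with the dominant element $\log a$. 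So the leading contribution is $\chi_{\mu-\rho}(a)$, coming from $w=e$.

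The main obstacle is the behaviour of $\mathbf{c}(w\mu)$, which may develop poles on Weyl walls even though the full Weyl sum remains regular (because $\eta_\mu$ is a jointly continuous function of $\mu$). To handle this, I would exploit the slack factor $\chi_{\rho\delta}(a)=\exp(\delta\langle\rho,\log a\rangle)$: for any $\delta>0$ it grows unboundedly in $\|\log a\|$ on $A^+$, since $\rho$ pairs strictly positively with any nonzero element of the positive cone. This dominates any polynomial (or bounded rational) blow-up of $\mathbf{c}(w\mu)$ near the walls. Concretely, I would perturb $\mu$ by a small regular vector $\mu'$, apply the term-by-term bound of the Macdonald sum in that regular regime, and then invoke continuity of $\eta_\mu$ in $\mu$ to pass back to the original $\mu$; the polynomial cost of the perturbation is absorbed into $\chi_{\rho\delta}(a)$. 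This yields the claimed estimate $|\eta_\mu(a)|\ll_\delta \chi_{\mu-\rho(1-\delta)}(a)$.
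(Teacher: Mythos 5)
Your proposal takes a genuinely different route from the paper. The paper's proof is shorter and avoids the $c$-function entirely: starting from $|\eta_\mu(g)|\le \intop_K|\chi_{\mu+\rho}(a(kg))|\d k=\intop_K\chi_{\Re\mu}(a(kg))\chi_\rho(a(kg))\d k$, it pulls out $\chi_{\Re\mu}(a(kg))\le\chi_{\Re\mu}(g)$ using the Bruhat--Tits inequality (\cite[Prop.~4.4.4(i)]{bruhat1972groupes}) for $g\in A^+$ and dominant $\mu$, which leaves exactly $\chi_{\Re\mu}(g)\cdot\eta_0(g)$, and then cites the standard bound $\Xi(g)=\eta_0(g)\ll_\delta\chi_{-\rho(1-\delta)}(g)$ on the Harish--Chandra $\Xi$-function. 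Your approach via Macdonald's formula is more explicit and makes visible where each Weyl term contributes, at the cost of having to deal with the $\mathbf{c}$-function directly; the paper's argument outsources the analytic content to the $\Xi$-function estimate and never has to see a singularity.

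The one place your sketch is too loose is the passage past the Weyl walls. The statement ``perturb $\mu$ by a small regular $\mu'$, apply the term-by-term bound, then invoke continuity of $\eta_\mu$ in $\mu$'' does not close the argument as written: continuity of $\eta_\mu(a)$ in $\mu$ is pointwise in $a$, and the term-by-term Macdonald bound you would be using at $\mu+\mu'$ carries a factor that blows up like a negative power of $\|\mu'\|$, so sending $\mu'\to 0$ loses all control. What actually works is to exploit holomorphy rather than mere continuity: $z\mapsto\eta_{\mu+z\rho}(a)$ is holomorphic, the maximum-modulus principle gives $|\eta_\mu(a)|\le\max_{|z|=\epsilon}|\eta_{\mu+z\rho}(a)|$, and on the circle one applies the Macdonald bound with a constant $\ll\epsilon^{-C}$; one then chooses $\epsilon$ (depending on $\|\log a\|$, not on $\mu$) so that $\epsilon^{-C}$ is dominated by $\chi_{\delta\rho/2}(a)$, falling back on a trivial bound for $a$ near the identity. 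That choice of $\epsilon$ as a function of $a$ is the step your sketch elides; it is essential because the implicit constant in the lemma must be uniform in both $\mu$ and $g$. With that supplied, your route is correct, though strictly longer than the paper's.

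Two minor notational points: $\mu-w\mu$ is a non-negative \emph{real} (not necessarily integer) combination of positive roots for a general dominant $\mu$, which is all you need; and your opening reduction to real $\mu$ is fine but unnecessary, since the paper's first step already factors out $\chi_{\Re\mu}$ inside the $K$-integral without replacing $\mu$.
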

\begin{proof}
Since our notations are different, we repeat the proof of \cite[Lemma~3.3]{ghosh2013aut}. It holds that 
\begin{align*}
|\eta_\mu(g)| 
&\le \intop_K |\chi_{\mu+\rho}(a(kg))|\d k  = \intop_K \chi_{\Re(\mu)}(a(kg))\chi_\rho(a(kg))\d k.
\end{align*}
Since we assume that $g\in A^+$ and $\mu$ is dominant, from \cite[Proposition~4.4.4(i)]{bruhat1972groupes}, we have 
\[
\chi_{\Re(\mu)}(a(kg)) \le \chi_{\Re(\mu)}(g).
\]
Therefore, 
\[
|\eta_\mu(g)| \le \chi_{\Re(\mu)}(g)\intop_K\chi_\rho(a(kg))\d k = \chi_{\Re(\mu)}(g)\eta_0(g).
\]
Finally, $\eta_0$ is Harish-Chandra's $\Xi$-function, which is bounded for $g\in A^+$ by
\[
\eta_0(g) = \Xi(g) \ll_\delta \chi_{-\rho(1-\delta)}(g),
\]
see, e.g., \cite[4.2.1]{silberger1979harmonic}.
\end{proof}

We will also need an estimate of the measure of double cosets below. This is elementary, a proof can be found in \cite[Lemma 4.1.1]{silberger1979harmonic}.

\begin{lemma}\label{lem: Ball size}
For every $g\in A^+$ we have $m(KgK)\asymp \chi_{2\rho}(g)$.
\end{lemma}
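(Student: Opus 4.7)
The plan is to bypass the Iwasawa integration formula entirely and reduce the computation to an index calculation. Because the Haar measure is normalized so that $m(K)=1$, decomposing $KgK$ into right $K$-cosets yields
\[
m(KgK)=|KgK/K|=[K:K\cap gKg^{-1}],
\]
since $KgK/K$ is the $K$-orbit of $gK\in G/K$ and has size equal to the index of the stabilizer. The task therefore reduces to showing $[K:K\cap gKg^{-1}]\asymp \chi_{2\rho}(g)$.

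Next I would describe the stabilizer explicitly. Lifting $g=\diag(p^{l_1},\dots,p^{l_n})\in A^+$ to $\GL_n(\Q_p)$ (the index is unchanged under passing to the $\PGL_n$ quotient since both groups contain the center), a matrix $k=(k_{ij})\in K$ lies in $gKg^{-1}$ iff $g^{-1}kg\in K$. Since $(g^{-1}kg)_{ij}=p^{l_j-l_i}k_{ij}$ and $l_1\le\cdots\le l_n$, the condition is automatic for $i\le j$ and becomes the congruence $k_{ij}\in p^{l_i-l_j}\Z_p$ for $i>j$. Thus $K\cap gKg^{-1}$ is the ``parahoric-type'' subgroup of $K$ cut out by these lower-triangular congruences.

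The Haar mass of this subgroup as a subset of $K$ can be read off directly. Let $\mathcal{L}_g\subset\Mat_n(\Z_p)$ be the sublattice defined by the same lower-triangular congruences (no invertibility imposed), with the natural product Haar measure; then $m(\mathcal{L}_g)=\prod_{i>j}p^{-(l_i-l_j)}=p^{-\sum_{i>j}(l_i-l_j)}$. It remains to compare $m(\mathcal{L}_g\cap\GL_n(\Z_p))$ with $m(\mathcal{L}_g)$: modulo $p$, the reduction of $\mathcal{L}_g$ consists of block-upper-triangular matrices whose diagonal blocks correspond to the coincident values among the $l_i$, and the fraction of these that are invertible over $\F_p$ is an explicit product of $|\GL_{m_k}(\F_p)|/p^{m_k^2}$ terms, bounded above and below by positive constants depending only on $n$ and $p$. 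Exactly the same type of bound applies to $\GL_n(\Z_p)\subset\Mat_n(\Z_p)$. Taking the ratio gives $m(K\cap gKg^{-1})\asymp p^{-\sum_{i>j}(l_i-l_j)}$.

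Finally, reindexing the double sum,
\[
\sum_{i>j}(l_i-l_j)=\sum_i l_i\bigl((i-1)-(n-i)\bigr)=\sum_i l_i(2i-n-1),
\]
which equals the exponent in $\chi_{2\rho}(g)=\prod_i p^{-l_i(n+1-2i)}=p^{\sum_i l_i(2i-n-1)}$ (using $|p^{l_i}|_p=p^{-l_i}$). Inverting the index produces the claimed bound $m(KgK)\asymp \chi_{2\rho}(g)$. The only delicate bookkeeping is the invertibility comparison in the third step when several $l_i$ coincide and the mod-$p$ block structure becomes coarser; however this only alters the implied constant, which we are permitted to depend on $n$ and $p$.
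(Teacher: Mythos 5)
Your proof is correct. The paper itself does not write out an argument; it just refers to Silberger's book (Lemma 4.1.1), so there is no in-text proof to compare against. Your self-contained derivation is a clean and standard one: since $m(K)=1$, the mass of the double coset equals the number of left $K$-cosets it contains, which is $[K:K\cap gKg^{-1}]$; then $K\cap gKg^{-1}$ is identified explicitly as the ``block-parahoric'' $\{k\in K: k_{ij}\in p^{l_i-l_j}\Z_p\text{ for }i>j\}$; and its measure relative to $K$ is computed by noting that the reduction mod $p$ of the ambient lattice $\mathcal{L}_g$ consists of block-upper-triangular matrices over $\F_p$, of which a positive proportion (bounded away from $0$ uniformly in $g$, depending only on $n,p$) are invertible. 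The final reindexing $\sum_{i>j}(l_i-l_j)=\sum_i l_i(2i-n-1)$ matches $\chi_{2\rho}(g)=p^{\sum_i l_i(2i-n-1)}$ (using $|p^{l_i}|_p=p^{-l_i}$ and $2\rho_i=n+1-2i$), so the conclusion follows. Your remark that passing from $\GL_n$ to $\PGL_n$ does not change the index is also justified, since the center $\Z_p^\times$ of $\GL_n(\Z_p)$ sits inside both $\GL_n(\Z_p)$ and $\GL_n(\Z_p)\cap g\,\GL_n(\Z_p)g^{-1}$. Incidentally, your careful sign bookkeeping also reveals a small typo in the paper's display \cref{ball-size}, whose exponent should read $-\sum_i l_i(n+1-2i)$ (equivalently $\sum_i l_i(2i-n-1)$) to agree with $\chi_{2\rho}(g)$; this does not affect any downstream use in the paper, which works with $\chi_{2\rho}$ directly.
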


We end this subsection with a discussion of spherical transform of a certain spherical function, which will be needed for latter purposes.

First, we want a measurement of how far are parameters from $i\a^*$. In our context, the relevant parameter is as follows. For dominant $\mu\in\a^*_\C$ parameterizing a unitary representation, we define 
\begin{equation}\label{defn-theta}
\theta(\mu):=\max_i \{|\Re(\mu_i)|\} = \max\{\Re(\mu_1),-\Re(\mu_n)\}.
\end{equation} 

We may assume that $0\le \theta(\mu)\le (n-1)/2$ since it is true for every spherical unitary representation.
Notice that $\theta(\mu)=0$ if and only if $\mu \in i\a^*$. Such Langlands parameters are called \emph{tempered}. 

\begin{remark}
For completeness, we write the relation between $\theta$ and the integrability parameter $q$ from \cref{sec:GGN}. 
By \cite[Lemma~3.2]{ghosh2013aut}, 
given $2\le q < \infty$, the following are equivalent: \begin{itemize}
    \item For every $\varepsilon>0$ it holds that $\eta_\mu\in L^{q+\varepsilon}(G)$.
    \item For every $k=1,\dots,n-1$,
        \[\sum_{i=1}^k \Re(\mu_i) \le (1-2/q) \sum_{i=1}^k \rho_i,\]
        where $\rho_i$ is the $i$-th coordinate of $\rho$\footnote{We remark that \cite[Lemma~3.2]{ghosh2013aut} has a typo which is fixed here.}.
\end{itemize}
Denote the maximal $q$ which satisfies the above equivalent conditions by $q(\mu)$. 
Then, in general, we have 
\[q(\mu)\ge\tilde{q}(\mu):=\frac{2(n-1)}{(n-1)-2\theta(\mu)},\] 
while for $n=2$ or $n=3$ it holds that $\tilde{q}(\mu) = q(\mu)$. 
\end{remark}


Given an integer $l\ge 0$, consider the finite set of tuples $0\le l_1\le\dots\le l_n$ such that $\sum_{i=1}^n l_i =l$. Each such sequence defines a \emph{different} element $\diag(p^{l_1},p^{l_2},\dots,p^{l_n})=\diag(1,p^{l_2-l_1},\dots,p^{l_n-l_1})\in A^+$.  
We define 
\begin{equation}\label{defn-mpl}
    M(p^l):= \bigsqcup_{\substack{0\le l_1\le\dots\le l_n\\\sum_{i=1}^n l_i =l}}K\diag(p^{l_1},\dots,p^{l_n})K=\bigsqcup_{\substack{0\le l_1\le\dots\le l_n\\\sum_{i=1}^n l_i =l}}K\diag(1,p^{l_2-l_1},\dots,p^{l_n-l_1})K.
\end{equation}
By applying \cref{lem: Ball size} we obtain:
\begin{equation}\label{ball-size}
m(K\diag(p^{l_1},\dots ,p^{l_n})K) \asymp p^{\sum_{i=1}^n l_i(n+1-2i)}.
\end{equation}
Summing over all the possible choices of $l_1\le \dots\le l_n$ such that $l_1+\dots +l_n=l$, we deduce that 
\begin{equation}\label{size-mpl}
    m(M(p^l)) \asymp p^{l(n-1)}
\end{equation}
where most of the mass is concentrated on the double coset with $l_1=\dots =l_{n-1}=0$, $l_n=l$.

We also define 
\begin{equation}\label{defn-hpl}
h_{p^l} := \frac{1}{m(M(p^l))}{\One_{M(p^l)}}\in C_c^\infty(K\backslash G / K)
\end{equation}
which is the normalized characteristic function of $M(p^l)$.
This operator will correspond to the usual \emph{Hecke operator} $T^*(p^l)$ which we will define in \cref{subsec:Hecke operators}.


\begin{lemma}\label{lem:bounds on lambda from theta}
It holds that for $\mu\in \a_{\C}^*$
\[
|\tilde{h}_{p^l}(\mu)|\ll_\delta p^{l(\theta(\mu) - (n-1)/2+\delta)},
\]
for every $\delta>0$. 

Alternatively, if we write \[\lambda_\mu(p^l) := \tilde{h}_{p^l}(\mu)m(M(p^l))p^{-l(n-1)/2},\] then we have
\[
|\lambda_\mu(p^l)|\ll_\delta p^{l(\theta(\mu)+\delta)},
\]
for every $\delta>0$.
\end{lemma}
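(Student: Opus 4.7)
My plan is to unfold $\tilde h_{p^l}(\mu)$ as a finite sum over the $A^+$-representatives in $M(p^l)$, bound each term using \cref{lem:upper bound spherical functions} and \cref{lem: Ball size}, and then optimise the resulting exponent over the admissible tuples $(l_1,\dots,l_n)$. Since $\eta_\mu$ and $\theta(\mu)$ depend only on the Weyl orbit of $\mu$, I may assume $\mu$ is dominant, so both $(\rho_i)$ and $(\Re\mu_i)$ are non-increasing. Bi-$K$-invariance of $\eta_\mu$ together with the definition \eqref{defn-hpl} of $h_{p^l}$ gives
\[
\tilde h_{p^l}(\mu)\;=\;\frac{1}{m(M(p^l))}\sum_{\substack{0\le l_1\le\cdots\le l_n\\ l_1+\cdots+l_n=l}} \eta_\mu(g_l)\,m(Kg_lK),\qquad g_l:=\diag(p^{l_1},\dots,p^{l_n}).
\]
Combining the two lemmas and using $\chi_\nu(g_l)=p^{-\sum_i l_i\nu_i}$ yields, for any $\delta>0$,
\[
|\eta_\mu(g_l)|\,m(Kg_lK)\;\ll_\delta\;\chi_{(1+\delta)\rho+\Re(\mu)}(g_l)\;=\;p^{-\sum_i l_ic_i},\qquad c_i:=(1+\delta)\rho_i+\Re(\mu_i).
\]

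The central step is a uniform upper bound on $-\sum_i l_ic_i$ over the admissible tuples. Since $(c_i)$ is non-increasing (by dominance) while $(l_i)$ is non-decreasing with total $l$, an Abel summation $l_i=\sum_{j\le i}m_j$ with $m_j\ge 0$ converts the problem into the linear programme
\[
\max\Bigl\{\sum_{j=1}^n m_j D_j \;:\; m_j\ge 0,\ \sum_{j=1}^n m_j(n-j+1)=l\Bigr\},\qquad D_j:=-\sum_{i\ge j}c_i,
\]
whose optimum $l\cdot\max_j D_j/(n-j+1)$ is attained at an extreme point supported on a single index $j$. Using $\sum_{i\ge j}\rho_i=-(n-j+1)(j-1)/2$, the ratio evaluates to $(1+\delta)(j-1)/2-(n-j+1)^{-1}\sum_{i\ge j}\Re(\mu_i)$; since $|\Re(\mu_i)|\le\theta(\mu)$ uniformly, this is at most $(1+\delta)(j-1)/2+\theta(\mu)\le(n-1)/2+\theta(\mu)+O(\delta)$.

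To finish, the number of admissible tuples is the partition count of $l$ into at most $n$ parts, namely $O(l^{n-1})=p^{o(l)}$, which is absorbed into $p^{l\delta}$. Dividing by $m(M(p^l))\asymp p^{l(n-1)}$ (cf.\ \eqref{size-mpl}) yields the first claim
\[
|\tilde h_{p^l}(\mu)|\;\ll_\delta\; p^{l(\theta(\mu)-(n-1)/2+\delta)},
\]
and the alternative formulation follows immediately from the identity $\lambda_\mu(p^l)=\tilde h_{p^l}(\mu)\,m(M(p^l))\,p^{-l(n-1)/2}$. I expect the LP/rearrangement step to be the main technical point: one has to reconcile the opposing monotonicity of $(l_i)$ and $(c_i)$, and exploit both $\sum_i\Re(\mu_i)=0$ and dominance to land the final dependence on $\theta(\mu)$ rather than on an individual component of $\Re(\mu)$.
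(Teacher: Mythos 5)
Your proof is correct and takes essentially the same route as the paper: unfold $\tilde h_{p^l}(\mu)$ over the Cartan double cosets, apply \cref{lem:upper bound spherical functions} together with \cref{lem: Ball size} to bound each summand by $\chi_{\rho(1+\delta)+\Re(\mu)}(g_l)$, and absorb the $O(l^{n-1})$ tuples into a $p^{l\delta}$ factor after dividing by $m(M(p^l))\asymp p^{l(n-1)}$. The one difference is presentational: where the paper simply observes that each summand is at most $p^{l((n-1)/2+\theta(\mu)+O(\delta))}$ (immediate from $\rho_i\ge-(n-1)/2$, $|\Re(\mu_i)|\le\theta(\mu)$, $l_i\ge0$ and $\sum_i l_i=l$), your Abel-summation/linear-programming reformulation derives the same bound with more machinery than is needed, and contrary to your closing remark it does not in fact use $\sum_i\Re(\mu_i)=0$.
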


\begin{proof}
Note that using the $W$-invariance of $\tilde{h}_{p^l}(\mu)$ it suffices to consider $\mu$ to be dominant.

From \cref{defn-mpl} and the definition of $h_{p^l}$ we have
\[
\tilde{h}_{p^l}(\mu) = \frac{1}{m(M(p^l))}\sum_{l_1,\dots ,l_n} m\left(K\diag(p^{l_1},\dots,p^{l_n})K\right)\eta_\mu\left(\diag(p^{l_1},\dots,p^{l_n})\right),
\]
where the sum is over $0 \le l_1\le\dots\le l_n$ with $l_1+\dots +l_n=l$.
We use \cref{lem:upper bound spherical functions}, \cref{size-mpl}, and \cref{ball-size} to bound the above display equation by
\[\ll_\delta \frac{1}{p^{(n-1)l}} \sum_{l_1,\dots ,l_n} \chi_{\rho(1+\delta)+\Re (\mu)}\left(\diag(p^{l_1},\dots,p^{l_n})\right).\]
Each summand above is bounded by
\[p^{l\left(\frac{n-1}{2}+\delta+\theta(\mu)\right)}.\]
Thus we obtain that $\tilde{h}_{p^l}(\mu)$ is bounded by
\[p^{l\left(-\frac{n-1}{2}+\delta+\theta(\mu)\right)}\sum_{l_1,\dots ,l_n}1.\]
Noting that the last sum is bounded by $l^n\ll_\delta p^{l\delta}$ we conclude.
\end{proof}

\begin{remark}\label{rem:trivial-bound-spherical-transform}
If $\mu$ is Langlands' parameter of a unitary representation then trivially we have $|\tilde{h}_{p^l}(\mu)|\le 1$.
\end{remark}

\subsection{The Paley--Wiener Theorem}
\label{subsec:Paley-Weiner}

In this subsection we will assume that $v= \infty$ and discuss the Paley-Wiener theorem for spherical functions. This is a common tool to localize the spectral side of a trace formula (e.g. see proof of Weyl's law in \cite{muller2007weyl}). See \cite[Section~3]{Duistermaat1979spectra} for details of the results in this subsection.

We define the \emph{Abel--Satake transform} (also known as the \emph{Harish-Chandra transform}) to be the map $C_c(K \backslash G / K)\to C_c(A)$ defined by
\[
f\mapsto \calS f : a\mapsto \Delta(a)^{1/2} \intop_{N}f(an)\d n .
\]
Since $\calS f$ is left $K \cap A$-invariant, it is actually a map on $A^0$, the connected component of the identity of $A$. We have the exponent map $\exp\colon\a\to A^0$ , given by \[(\alpha_1,\dots ,\alpha_n)\to \diag(\exp(\alpha_1),\dots ,\exp(\alpha_n)).\] 
This gives an identification of $\a$ with $A^0$. So we may as well consider
\[
\calS f \in C_c(\a)
\]
after pre-composing with $\exp$ map.

It holds that $\calS f$ is $W$-invariant and Gangolli  showed that 
\[\calS\colon C_c^
\infty(K \backslash G / K) \to C_c^\infty(\a)^{W}\]
is an isomorphism of topological algebras (see \cite[3.21]{Duistermaat1979spectra}).
Harish-Chandra showed that, if we denote the Fourier--Laplace transform $C_c(\a)\to C(\a_\C^*)$ by the map
\[h\mapsto \hat{h}:\mu\mapsto \intop_\a h(\alpha) e^{\mu \alpha}\d\alpha,\]
then it holds that 
\[\tilde{h} = \widehat{\calS(h)}.\]
Gangolli (see \cite[eq. 3.22]{Duistermaat1979spectra}) also proved the following important result.

\begin{prop}\label{thm:Gangolli}
Let $h\in C_c^\infty(\a)^{W}$ be such that $\supp (h) \subset \{\alpha \in \a\mid\|\alpha\|\le b\}$, then \[\supp (\calS^{-1} h) \subset B_b:=K \{\exp \alpha\mid\alpha \in \a, \|\alpha\|\le b\}K.\]
\end{prop}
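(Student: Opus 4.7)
The plan is to use a contour-shift argument in the spherical inversion formula, analogous to the standard proof of the Euclidean Paley--Wiener theorem. Write $f := \calS^{-1} h$. Since $h \in C_c^\infty(\a)^W$ is supported in $\{\|\alpha\| \le b\}$, its Fourier--Laplace transform $\hat h$ extends to an entire $W$-invariant function on $\a_\C^*$ obeying the classical Paley--Wiener estimate
\[
|\hat h(\mu)| \ll_N (1+\|\mu\|)^{-N}\, e^{b\|\Re\mu\|}, \qquad \forall\,N\ge 0.
\]
By Harish-Chandra's identity $\tilde f = \widehat{\calS f} = \hat h$, the spherical transform of $f$ inherits this exponential-type bound.

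By bi-$K$-invariance of $f$, it suffices to show that $f(\exp \alpha) = 0$ whenever $\alpha$ is in the closed positive Weyl chamber of $\a$ with $\|\alpha\| > b$. Fix such an $\alpha$, apply the spherical inversion formula \eqref{eq:spherical inversion formula}, and attempt to deform the contour from $i\a^*$ to $\nu + i\a^*$ for $\nu \in \a^*$ tending to infinity in an appropriate direction. The integral representation $\eta_\mu(\exp\alpha) = \int_K \chi_{\mu+\rho}(a(k\exp\alpha))\,\d k$ together with Kostant's convexity theorem controls $|\eta_\mu(\exp\alpha)|$ on the shifted contour, while the Plancherel density $d(\mu)$ grows only polynomially by \eqref{eq:estimate of c function}. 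Combined with the Paley--Wiener bound on $\hat h$, these inputs should produce an integrand that decays faster than any polynomial as $\|\nu\| \to \infty$, forcing $f(\exp\alpha) = 0$.

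The principal obstacle lies in executing the contour shift rigorously. The density $d(\mu)$ is essentially $|c(\mu)|^{-2}$ and so has poles along the positive-root hyperplanes of $\a_\C^*$, while the naive pointwise bound $|\eta_\mu(\exp\alpha)| \le e^{(\Re\mu + \rho)^+(\alpha)}$ depends on the Weyl-chamber representative $(\Re\mu + \rho)^+$ and does not decay uniformly as $\Re\mu$ leaves the dominant chamber. Both obstructions are handled simultaneously by exploiting the $W$-invariance of the integrand $\hat h(\mu)\overline{\eta_\mu(\exp\alpha)}d(\mu)$: symmetrizing over the Weyl group before shifting the contour allows the $c$-function residues at a root hyperplane and its Weyl-conjugate to cancel pairwise (the Gindikin--Karpelevich cancellation), and it reduces the bound on $\eta_\mu$ effectively to a single controlled summand. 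This is the technical heart of Gangolli's theorem, and carrying it out in detail is exactly what distinguishes the present statement from the classical Paley--Wiener theorem in $\R^d$.
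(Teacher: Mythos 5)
The paper does not supply a proof of this proposition; it is cited directly from Duistermaat--Kolk--Varadarajan \cite[eq.~3.22]{Duistermaat1979spectra}, who attribute it to Gangolli. So there is no in-paper argument to compare against, only the named reference.

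Your sketch correctly identifies the strategy (spherical inversion plus a contour shift) and the two genuine obstacles (singularities of $d(\mu)$ on root hyperplanes, and the loss of a uniform exponential bound on $\eta_\mu$ once $\Re\mu$ leaves the dominant chamber). However, the mechanism you propose to overcome them --- symmetrize over $W$ and let ``the $c$-function residues at a root hyperplane and its Weyl-conjugate cancel pairwise'' --- is not how Gangolli's proof works and would not close the argument on its own. The actual proof substitutes the Harish-Chandra series expansion $\eta_\mu(\exp\alpha)=\sum_{w\in W}c(w\mu)\Phi_{w\mu}(\exp\alpha)$ (valid for $\alpha$ in the open positive chamber) into the inversion integral. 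The $W$-invariance of $\hat h$ together with the factorization $d(\mu)\asymp c(\mu)^{-1}c(-\mu)^{-1}$ then collapses the $|W|$-fold sum into an integral of the single term $\hat h(\mu)\,\Phi_\mu(\exp\alpha)\,c(-\mu)^{-1}$ (up to normalization). The crucial inputs, which your proposal omits, are: (i) by the Gindikin--Karpelevich product formula, $c(-\mu)^{-1}$ is \emph{holomorphic} (not merely has residues that cancel) on the tube where $\Re(-\mu)$ is dominant; and (ii) $\Phi_\mu(\exp\alpha)=e^{(\mu-\rho)(\alpha)}\sum_{q}\Gamma_q(\mu)e^{-q(\alpha)}$ converges there with coefficients $\Gamma_q(\mu)$ of controlled, polynomial growth. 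Only with (i) and (ii) can one shift $\Re\mu\to -t\alpha$ and read off the decay $e^{t(b-\|\alpha\|)}\to 0$. The bound you invoke via Kostant convexity, $|\eta_\mu(\exp\alpha)|\ll e^{\max_{w}\langle\Re\mu, w\alpha\rangle}$, does not give the needed decay after the shift: for $\SL(n)$ with $n>2$ one has $-\mathrm{id}\notin W$, so $\max_w\langle -t\alpha, w\alpha\rangle = -t\min_w\langle\alpha,w\alpha\rangle$ is in general strictly larger than $-t\|\alpha\|^2$, and the naive estimate fails to beat $e^{tb\|\alpha\|}$. In short: the outline points in the right direction, but the technical heart --- the Harish-Chandra expansion, the holomorphy of $c(-\mu)^{-1}$ on the closed tube including the walls, and the uniform control of $\Gamma_q(\mu)$ --- is exactly what you have left out, and the ``residue-cancellation'' picture does not substitute for it.
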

We record the classical Paley--Wiener theorem, which asserts that for $h\in C_c^\infty(\a)^{W}$ with support in $\{\alpha \in \a\mid\|\alpha\|\le b\}$ we have
\begin{equation}\label{paley-wiener-thm}
|\hat{h}(\mu)|\ll_{N,h} \exp(b\|\Re (\mu) \|)(1+\|\mu\|)^{-N}
\end{equation}
for every $N\ge 0$.

The following lemma is standard (compare, e.g., \cite[Subsection~6.2]{Duistermaat1979spectra}). We give a proof because the lemma is crucial in our work.

\begin{lemma}\label{lem:k_0 conditions}
Let $\varepsilon\to 0$. There exists a function $k_\varepsilon \in C_c(K\backslash G/K)$ that satisfies the following properties:
\begin{enumerate}
    \item $k_\varepsilon$ is supported on $B_\varepsilon:= K\{\exp \alpha\mid\alpha\in \a, \n{a}\le \varepsilon \}K$.
    \item We have $\intop_G k_\varepsilon(g)\d g =1$.
    \item We have $\n{k_\varepsilon}_\infty \ll \varepsilon^{-d}$ and $\n{k_\varepsilon}_2 \ll \varepsilon^{-d/2}$.
    \item The spherical transform satisfies for every $\mu \in a_\C^*$ with $\theta(\mu)\le (n-1)/2$,
    \begin{equation*}\label{eq:spectral decay}
     |\tilde{k}_\varepsilon(\mu)| \ll_N (1+ \varepsilon\|\mu\|)^{-N},
    \end{equation*}
    for all $N>0$.
    \item There is a constant $C>0$ such that for $\mu \in a_\C^*$ satisfying $\|\mu \|\le C\varepsilon^{-1}$ it holds that
    $|\tilde{k}_\varepsilon(\mu)| \gg 1.$
\end{enumerate}

\end{lemma}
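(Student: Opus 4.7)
The plan is to construct $k_\varepsilon$ as the inverse Abel--Satake transform of an appropriately scaled bump function on $\a$, and then read off each of the five properties from properties of the bump.

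Fix once and for all a nonnegative, $W$-invariant $\phi_0 \in C_c^\infty(\a)^{W}$ supported in the unit ball of $\a$, not identically zero (e.g.\ $\phi_0(\alpha) = f(\|\alpha\|^2)$ for a suitable nonnegative bump $f$). For each small $\varepsilon > 0$, set
\[
\phi_\varepsilon(\alpha) := c_\varepsilon\, \varepsilon^{-(n-1)}\phi_0(\alpha/\varepsilon),
\]
and define $k_\varepsilon := \calS^{-1}(\phi_\varepsilon) \in C_c^\infty(K\backslash G/K)$. The normalizing constant $c_\varepsilon$ is chosen so that $\int_G k_\varepsilon\,\d g = 1$; since $\int_G k_\varepsilon\,\d g = \tilde{k}_\varepsilon(-\rho) = \hat\phi_\varepsilon(-\rho) = c_\varepsilon \int_\a \phi_0(\beta) e^{-\varepsilon\rho\beta}\,\d\beta \to c_\varepsilon\int_\a\phi_0$ as $\varepsilon\to 0$, and $\int\phi_0 > 0$, we have $c_\varepsilon \asymp 1$, so this rescaling does not affect the remaining estimates.

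The support property (1) is then \cref{thm:Gangolli} applied with $b = \varepsilon$. The normalization (2) is built in. For the spectral decay (4), I would use $\tilde{k}_\varepsilon = \widehat{\calS k_\varepsilon} = \hat\phi_\varepsilon$, combined with the identity $\hat\phi_\varepsilon(\mu) = c_\varepsilon\hat\phi_0(\varepsilon\mu)$ (change of variables). The classical Paley--Wiener bound \eqref{paley-wiener-thm} for $\phi_0$ gives
\[
|\hat\phi_0(\varepsilon\mu)| \ll_N \exp\bigl(\varepsilon\|\Re\mu\|\bigr)\bigl(1 + \varepsilon\|\mu\|\bigr)^{-N},
\]
and since $\theta(\mu) \le (n-1)/2$ bounds $\|\Re\mu\|$ uniformly, the exponential factor is $O(1)$ as $\varepsilon \to 0$. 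Property (5) follows by continuity of $\hat\phi_0$ at $0$: since $\hat\phi_0(0) = \int\phi_0 > 0$, there exist $C, c' > 0$ with $|\hat\phi_0(\nu)| \ge c'$ for $\|\nu\| \le C$, which upon rescaling reads $|\tilde{k}_\varepsilon(\mu)| \gg 1$ for $\|\mu\| \le C\varepsilon^{-1}$.

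The only subtle part is the pair of bounds in (3), and it is where I expect to spend the most care. Starting from the spherical inversion \eqref{eq:spherical inversion formula} together with $|\eta_\mu(g)| \le 1$ for $\mu \in i\a^*$ (matrix coefficients of unit $K$-fixed vectors) and the density estimate \eqref{eq:estimate of c function}, one gets
\[
\|k_\varepsilon\|_\infty \le \intop_{i\a^*}|\tilde{k}_\varepsilon(\mu)|\,d(\mu)\,\d\mu \ll_N \intop_{i\a^*}\bigl(1+\varepsilon\|\mu\|\bigr)^{-N}\bigl(1+\|\mu\|\bigr)^{n(n-1)/2}\,\d\mu.
\]
Substituting $\nu = \varepsilon\mu$ (with $\d\mu = \varepsilon^{-(n-1)}\d\nu$), splitting the $\nu$-integral into $\|\nu\|\le 1$ and $\|\nu\|\ge 1$, and choosing $N$ large enough to make the tail converge, both pieces contribute $\ll \varepsilon^{-(n-1)}\cdot\varepsilon^{-n(n-1)/2} = \varepsilon^{-d}$ since $d = (n-1) + n(n-1)/2 = (n-1)(n+2)/2$. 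The $L^2$-bound uses the spherical Plancherel formula in exactly the same way, with $(1+\varepsilon\|\mu\|)^{-2N}$ in place of $(1+\varepsilon\|\mu\|)^{-N}$, yielding $\|k_\varepsilon\|_2^2 \ll \varepsilon^{-d}$. The main obstacle is purely bookkeeping: ensuring that the exponent in $\varepsilon$ works out to precisely $-d$ (and $-d/2$), which relies on the exact value of the Plancherel density exponent $d-(n-1)$ from \eqref{eq:estimate of c function} combining correctly with the Jacobian of the rescaling on $\a^*$.
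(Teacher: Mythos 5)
Your proof is correct and takes essentially the same route as the paper: construct $k_\varepsilon$ as $\calS^{-1}$ of an $\varepsilon$-rescaled bump on $\a$, read off support from Gangolli, spectral decay from Paley--Wiener, the lower bound from continuity of $\hat\phi_0$ at $0$, and the $L^\infty$ bound from the inversion formula and the Plancherel density estimate. The one place you diverge from the paper is the $L^2$ estimate in property (3): you deduce $\|k_\varepsilon\|_2^2 \ll \varepsilon^{-d}$ directly from the spherical Plancherel formula, whereas the paper takes the shortcut $\|k_\varepsilon\|_2^2 \le \|k_\varepsilon\|_\infty^2\,\vol(B_\varepsilon) \ll \varepsilon^{-2d}\cdot\varepsilon^d$; both yield the same bound, and the paper's route is slightly quicker since it reuses the $L^\infty$ bound and property (1) that are already in hand.
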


\begin{proof}
Choose a fixed $h \in C_c^\infty(\a)^{W}$, having the properties that
\begin{itemize}
    \item $h$ is non-negative,
    \item $\hat{h}(0)= 1$,
    \item $\supp(h)\subset \{\alpha \in \a\mid\|\alpha \|\le 1/2\}$. 
\end{itemize}
By the Paley--Wiener theorem, as in \cref{paley-wiener-thm}, an the third property above we have \[|\hat{h}(\mu)|\ll_{h,N}{{\exp(\|\Re(\mu)\|/2)}} (1+\|\mu\|)^{-N}.\]
In addition, by continuity of $h$ and the second property above we can find a constant $C>0$ such that $|\hat{h}(\mu)|\ge 1/2$ for $\|\mu\|\le C$.

We define $h_\varepsilon(\alpha) := \varepsilon^{{-(n-1)}}h(\alpha/\varepsilon)$.
Then we have $\hat{h}_\varepsilon(\mu) = \hat{h}(\varepsilon\mu)$.  

Finally, we define 
\[k_\varepsilon := C_\varepsilon^{-1} \calS^{-1}(h_\varepsilon),\quad\text{where } C_\varepsilon := \intop_G\calS^{-1}(h_\varepsilon)(g) \d g.\] 
Hence, $\tilde{k}_\varepsilon=C_\varepsilon^{-1}\hat{h}_\varepsilon$.

Let us now prove the different properties of $k_\varepsilon$. Property $(2)$ follows from the normalization. 
By \cref{thm:Gangolli}, $k_\varepsilon$ is supported on $B_\varepsilon$, proving property $(1)$.

To estimate $C_\varepsilon$, we first notice that the spherical function ${\eta_{-\rho}}$ is simply the constant function $1$. So we have \[C_\varepsilon=\intop_G\calS^{-1}(h_\varepsilon)(g) \d g = \intop_G\calS^{-1}(h_\varepsilon)(g)\overline{\eta_{-\rho}}(g) \d g = \hat{h}_\varepsilon(-\rho) = \hat{h}(-\varepsilon\rho).\]
As $\hat{h}(0) = 1$, we have $C_\varepsilon\asymp 1$ as $\varepsilon\to 0$.
This implies that 
\[|\tilde{k}_\varepsilon(\mu)| = |C_\varepsilon^{-1} \hat{h}_\varepsilon(\mu)|= |C_\varepsilon^{-1} \hat{h}(\varepsilon \mu)|\ll_N {{\exp(\varepsilon\|\Re(\mu)\|/2)}}(1+\varepsilon\|\mu\|)^{-N},\]
so property $(4)$ holds. 

Similarly, for $\|\mu\|\le C\varepsilon^{-1}$, 
\[|\tilde{k}_\varepsilon(\mu)| = |C_\varepsilon^{-1} \hat{h}(\varepsilon\mu)| \ge C_\varepsilon^{-1}/2 \gg 1,
\]
so property $(5)$ holds.

Finally, using the spherical inversion formula as in \cref{eq:spherical inversion formula}, we have
\[|k_\varepsilon(g)|\le  \intop_{i\a^*} |\tilde{k}_\varepsilon(\mu)| |\eta_\mu(g)| d(\mu) \d \mu.\]
Using the fact that $|\eta_\mu(g)| \le 1$ for every $g\in G$ and $\mu \in i\a^*$ unitary, property $(4)$, and \cref{eq:estimate of c function} we obtain that the above is bounded by
\[\ll_N \intop_{i\a^*} (1+\varepsilon \|\mu\|)^{-N} (1+\|\mu\|)^{d-n+1} \d \mu.\]
The above integral can be estimated by making $N$ large enough as
\[\ll\intop_0^\infty(1+\varepsilon x)^{-N} x^{d-1} \d x \ll \varepsilon^{-d}.\]
This shows that $\|k_\varepsilon\|_\infty \ll \varepsilon^{-d}$. The bound on $\|k_\varepsilon\|_2$ follows from the bound on $\|k_\varepsilon\|_\infty$, property $(1)$, and the fact that $\vol(B_\varepsilon)\asymp \varepsilon^d$.
\end{proof}

\section{Preliminaries - Global Theory}\label{sec:global preliminaries}

\subsection{Adelic formulation}\label{subsec:adelic formulation}
In this section, we describe global preliminaries in adelic language that are needed for the proof. Temporarily in this section, $p$ will denote a generic finite prime of $\Q$.

Let $\bbA := \R \times \prod_{p}^\prime \Q_p$ be the adele ring of $\Q$, where $\prod^\prime$ means that if $x=(x_\infty,\dots,x_p,\dots)\in\bbA$ then $x_p\in\Z_p$ for almost all $p$. 

We recall the notations $K_\infty = \PO_n(\R)$ and $K_p = \PGL_n(\Z_p)$. We denote $K_{\bbA} := K_\infty \times \prod_{p} K_p$ which is a hyper-special maximal compact subgroup of $\PGL_n(\bbA)$. We also denote
\[\X_{\bbA} := \PGL_n(\Q) \backslash \PGL_n(\bbA) / K_{\bbA}.\]

Recall that $\X := \SL_n(\Z) \backslash \H^n \cong \SL_n(\Z) \backslash \SL_n(\R) / \SO_n(\R)$. 

Let $\varphi\colon \SL_n(\R) \to \PGL_n(\R)$ be the natural quotient map. This map defines an action of $\SL_n(\R)$ on $\PGL_n(\R)/K_\infty$, which is easily seen to be transitive, and since $\varphi^{-1}(\PO_n(\R))=\SO_n(\R)$ we can identify $\H^n = \SL_n(\R)/\SO_n(\R) \cong \PGL_n(\R)/ K_\infty$. By considering the left action of $\SL_n(\Z)$ on the two spaces we can identify 
\[
\X \cong \PSL_n(\Z) \backslash \PGL_n(\R) / K_\infty.
\]
Similarly, by considering the transitive right action of $\SL_n(\R)$ on $\PSL_n(\Z) \backslash \PGL_n(\R)$ we may identify $\SL_n(\Z) \backslash \SL_n(\R) \cong \PGL_n(\Z) \backslash \PGL_n(\R)$ and 
\[
\X \cong \PGL_n(\Z) \backslash \PGL_n(\R) / \PSO_n(\R).
\]
It is simpler for us to work with the space 
\[\X_0 := \PGL_n(\Z) \backslash \PGL_n(\R) / K_\infty,\]
which is a quotient space of $\X$ by the group $\PGL_n(\Z)/\PSL_n(\Z)$ of size $2$.

The following \cref{lem: local-global space equivalence} allows us to identify 
$\X_0$ with $\X_{\bbA}$.
The lemma is well known, but essential for this work, so we provide a proof for completeness.

\begin{lemma}\label{lem: local-global space equivalence}
We have
\begin{equation*}
\X_0  \cong \X_\bbA
\end{equation*}
as topological spaces.
\end{lemma}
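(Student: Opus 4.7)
The plan is to build an explicit homeomorphism $\X_0 \to \X_\bbA$ out of the natural archimedean embedding $\iota\colon \PGL_n(\R) \hookrightarrow \PGL_n(\bbA)$, $h \mapsto (h; e, e, \dots)$. Composing $\iota$ with the quotient $q\colon \PGL_n(\bbA) \to \X_\bbA$, I aim to show that $q\circ\iota$ is continuous and surjective, factors through $\X_0$, and descends to a continuous open bijection $\X_0 \xrightarrow{\sim} \X_\bbA$.

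The first substantive step is the strong approximation identity
\[\PGL_n(\bbA) = \PGL_n(\Q)\cdot\PGL_n(\R)\cdot K_\bbA^f, \qquad K_\bbA^f := \prod_p K_p,\]
which is equivalent to the surjectivity of $q\circ\iota$. I would deduce this from the $\GL_n$-version, $\GL_n(\bbA) = \GL_n(\Q)\cdot\GL_n(\R)\cdot\prod_p \GL_n(\Z_p)$, which in turn is the class number one property of $\Z$: every full-rank $\hat{\Z}$-lattice of $\bbA^{f,n}$ lies in the $\GL_n(\Q)$-orbit of $\hat{\Z}^n$ (Smith normal form). The passage from $\GL_n$ to $\PGL_n$ uses the local surjectivity $\GL_n(\Z_p)\twoheadrightarrow K_p$, which follows by clearing denominators of any lift of an element of $K_p$ to $\GL_n(\Q_p)$ and rescaling to adjust the determinant.

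The next step is to identify the fibers of $q\circ\iota$. If $\iota(h_1)$ and $\iota(h_2)$ lie in the same $(\PGL_n(\Q),K_\bbA)$-double coset, so that $(h_1;e,\dots) = \gamma(h_2;e,\dots)k$ for some $\gamma\in\PGL_n(\Q)$ and $k = (k_\infty;k_p)\in K_\bbA$, then comparing at each finite prime forces $\gamma = k_p^{-1} \in K_p$, whence $\gamma\in \PGL_n(\Q)\cap K_\bbA^f = \PGL_n(\Z)$ (the latter equality reducing again to the $\GL_n$-fact $\GL_n(\Q)\cap\prod_p\GL_n(\Z_p)=\GL_n(\Z)$). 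At the archimedean place the equation then becomes $h_1 = \gamma h_2 k_\infty$, which is precisely the $\X_0$-equivalence.

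Finally, for the topological upgrade, I would note that the open subset $U = \PGL_n(\R)\times K_\bbA^f \subset \PGL_n(\bbA)$ surjects onto $\X_\bbA$ under the open quotient map $q$, and the induced equivalence on $U$ is exactly the one defining $\X_0$ (after absorbing the irrelevant $K_\bbA^f$ coordinate). Hence $q|_U$ factors as a continuous open bijection $\X_0\to\X_\bbA$, which is therefore a homeomorphism. The main obstacle is executing the descent from $\GL_n$ to $\PGL_n$ cleanly; both the local surjectivity $\GL_n(\Z_p)\to K_p$ and the intersection $\PGL_n(\Q)\cap K_\bbA^f = \PGL_n(\Z)$ require careful tracking of the center and the determinant.
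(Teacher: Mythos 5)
Your proposal is correct and follows essentially the same approach as the paper: both rest on the class-number-one decomposition $\GL_n(\bbA) = \GL_n(\Q)\,\GL_n(\R)\prod_p\GL_n(\Z_p)$, descend it to $\PGL_n$, identify the stabilizer/fiber as $\PGL_n(\Z)$, and finish with the standard openness of the quotient map. The only cosmetic difference is in the bookkeeping of the center: the paper invokes $\bbA^\times = \Q^\times\R^\times\prod_p\Z_p^\times$ to project the decomposition to $\PGL_n$, whereas you use the local surjectivity $\GL_n(\Z_p)\twoheadrightarrow K_p$ and the intersection $\PGL_n(\Q)\cap K_\bbA^f = \PGL_n(\Z)$ (which, note, is not literally a direct ``reduction'' to $\GL_n(\Q)\cap\prod_p\GL_n(\Z_p)=\GL_n(\Z)$ but a short argument in its own right: scale a lift $\gamma$ to have coprime integer entries, then $\gamma\in\GL_n(\Z_p)\cdot p^{\Z}$ for all $p$ together with integrality forces $\det\gamma=\pm1$). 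Either variant works.
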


\begin{proof}
Let $K_f = \prod_p K_p$ embedded naturally in $\GL_n(\bbA)$. It is enough to prove that 
\[ 
\PGL_n(\Z) \backslash \PGL_n(\R) \cong \PGL_n(\Q) \backslash \PGL_n(\bbA) / K_f.
\]
By the fact that $\GL_n$ over $\Q$ has class number $1$ (see \cite[Proposition 8.1]{platonov1994algebraic}) we have
\[ \GL_n(\bbA) = \GL_n(\Q) \GL_n(\R) \prod_p \GL_n(\Z_p).\]
Alternatively, this follows from the fact that 
\begin{equation}\label{eq:local class number 1}
    \GL_n(\R) \times \GL_n(\Q_p) = \GL_n(\Z[1/p])\left(\GL_n(\R) \times \GL_n(\Z_p)\right),
\end{equation}
where $\GL_n(\Z[1/p])$ is embedded diagonally in the left hand side.
Using the case $n=1$ which states
\[
\bbA^\times = \Q^\times \R^\times \prod_p \Z_p^\times,
\]
we get 
\[ \PGL_n(\bbA) = \PGL_n(\Q) (\PGL_n(\R) \times K_f).\]
We deduce that the right action of $\PGL_n(\R)$ on $\PGL_n(\Q) \backslash \PGL_n(\bbA) / K_f$ is onto. The stabilizer of this action is $\PGL_n(\Z)$, so we get the desired homeomorphism.
\end{proof}

\begin{remark}
Alternatively, it holds that 
\begin{equation}\label{eq:GLn vs PGL_n local}
\X_0 \cong \GL_n(\Z)\R^\times \backslash \GL_n(\R) / \O_n(\R)    
\end{equation}
and
\begin{equation}\label{eq:GLn vs PGL_n global}
\X_\bbA \cong \GL_n(\Q)\R^\times \backslash \GL_n(\bbA) / \O_n(\R)\times \prod_p \GL_n(\Z_p).
\end{equation}
\end{remark}

\begin{remark}\label{rem:local-global X}
A similar proof using the action on $\PSL_n(\R) \subset \PGL_n(\R)$ identifies $\X$ with the adelic space $\PGL_n(\Q) \backslash \PGL_n(\bbA) / (\PSO_n(\R)\times K_f)$ (i.e., $K_\infty = \PO_n(\R)$ is replaced with $\PSO_n(\R)$).
\end{remark}


\subsection{Hecke Operators}
\label{subsec:Hecke operators}

We want to consider Hecke operators on $L^2(\X_\bbA)$ (equivalently, on $L^2(\X_0)$) from a representation theoretic point of view. This is standard (e.g., \cite{clozel2001hecke}), but since we work on $\PGL(n)$, which is not simply connected, some modifications are needed.

Let $l\ge0$ and $p$ be any finite prime. Consider the infinite set \[R(p^l) = \{ x \in \Mat_n(\Z)\mid \det(x) = p^l\}.\]

Recall that for every ring $R$ we have a natural projection $\GL_n(R)\to \PGL_n(R)$, and we will write $\tilde{R}(p^l)$ for the projection of $R(p^l)$ to $\PGL_n(\Z[1/p]) \subset \PGL_n(\Q)$.  Notice that $\tilde{R}(p^l)$ is both left and right $\tilde{R}(1)=\PSL_n(\Z)$-invariant. Let $A(p^l)$ be a finite set of representatives for $\tilde{R}(p^l)/\tilde{R}(1)$. It is possible to explicitly describe the set $A(p^l)$ (\emph{e.g.}, see \cite[Lemma~9.3.2]{goldfeld2006aut} for \emph{right} cosets), but we will not need this explicit description.

\begin{lemma}\label{lem:double coset comparison}
It holds that $K_p \tilde{R}(p^l) K_p = M(p^l)$, where 
$M(p^l) \subset \PGL_n(\Q_p)$ is (as in \cref{subsec:bounds on spherical functions}) the disjoint union of the double cosets of the form 
\[
K_p \diag(p^{l_1},\dots ,p^{l_n}) K_p,
\]
with $ 0 \le l_1 \le\dots \le l_n$ and $\sum l_i = l$. 

Moreover, the elements of $A(p^l)$ can be taken as representatives of the left $K_p$-cosets of $M(p^l)$.
\end{lemma}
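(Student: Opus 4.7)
The plan is to prove the two assertions of the lemma separately. The first assertion, $K_p\,\tilde{R}(p^l)\,K_p = M(p^l)$, should follow from the Smith Normal Form theorem over $\Z$, while the second, that the elements of $A(p^l)$ serve as left $K_p$-coset representatives for $M(p^l)$, should follow from the class-number-one property of $\GL_n/\Q$ already used in the proof of \cref{lem: local-global space equivalence}.

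For the first assertion, I would take an arbitrary $A \in R(p^l)$ and invoke the Smith Normal Form over $\Z$ to write $A = U\,\diag(p^{l_1},\dots,p^{l_n})\,V$ with $U,V \in \GL_n(\Z)$ and $0 \le l_1 \le \dots \le l_n$, $\sum_i l_i = l$ (any signs absorbed into $U$ or $V$). Since $\GL_n(\Z) \subset \GL_n(\Z_p)$ projects into $K_p$, the image of $A$ in $\PGL_n(\Q_p)$ lies in $K_p\,\diag(p^{l_1},\dots,p^{l_n})\,K_p \subset M(p^l)$, proving $\tilde{R}(p^l) \subset M(p^l)$ and hence $K_p\,\tilde{R}(p^l)\,K_p \subset M(p^l)$. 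The reverse inclusion is immediate because each diagonal representative $\diag(p^{l_1},\dots,p^{l_n})$ with $l_i \ge 0$ and $\sum_i l_i = l$ is itself the image of an integer matrix of determinant $p^l$, hence lies in $\tilde{R}(p^l)$.

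For the second assertion, I would study the natural map $\Phi \colon \tilde{R}(p^l)/\tilde{R}(1) \to M(p^l)/K_p$ defined by $r\,\tilde{R}(1) \mapsto r\,K_p$, which is well-defined since $\tilde{R}(1) = \PSL_n(\Z) \subset K_p$. Surjectivity uses the class-number-one identity $\PGL_n(\Q_p) = \PGL_n(\Z[1/p])\,K_p$, obtained from \cref{eq:local class number 1} by restricting to the $p$-adic factor: given $m \in M(p^l)$, write $m = \gamma k$ with $\gamma \in \PGL_n(\Z[1/p])$ and $k \in K_p$, so $m K_p = \gamma K_p$, and then verify $\gamma \in \tilde{R}(p^l)$ by choosing a lift of $\gamma$ in $\GL_n(\Z[1/p])$ whose entries and determinant can be normalized to lie in $\Z$ and equal $p^l$ respectively (using $\Z[1/p] \cap \Z_p = \Z_{(p)}$, together with the fact that the scalar needed to move such a lift into $\GL_n(\Z_p)$ is forced to be a $p$-adic unit by a determinant count). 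Injectivity is obtained by lifting $r_1, r_2 \in \tilde{R}(p^l)$ with $r_1 K_p = r_2 K_p$ to $\tilde{r}_i \in R(p^l)$ with $\det \tilde{r}_i = p^l$, and observing that $s := \tilde{r}_2^{-1}\tilde{r}_1$ lies simultaneously in $\SL_n(\Z[1/p])$ (clear from $\det \tilde{r}_2 = p^l$ and Cramer's rule) and in $\GL_n(\Z_p)$ (the $p$-adic scaling is again forced into $\Z_p^\times$), so its entries lie in $\Z[1/p] \cap \Z_{(p)} = \Z$, yielding $s \in \SL_n(\Z) = R(1)$.

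The main obstacle, although minor, is the careful bookkeeping of scalar normalizations in $\PGL_n$, where the determinant is only defined modulo $n$-th powers. In particular, for $n$ even one cannot use multiplication by $-I$ to flip the sign of $\det$, so a small extra observation is required to ensure a normalized integer lift has $\det = +p^l$ rather than $-p^l$. I expect this to reduce to a routine sign/valuation check but it is the step where the projectivization subtlety most directly intrudes on the otherwise formal strong-approximation argument.
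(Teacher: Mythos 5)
Your proposal follows the same overall strategy as the paper and is correct, with one cosmetic variant: for the inclusion $K_p\,\tilde{R}(p^l)\,K_p \subset M(p^l)$ you invoke Smith Normal Form over $\Z$, while the paper applies the Cartan decomposition in $\GL_n(\Q_p)$ directly to an integral matrix. At a single prime these are the same tool, but the SNF version keeps the argument over $\Z$ and is slightly more elementary; it is a perfectly good substitute. For the second assertion you use the same class-number-one input as the paper, and your explicit injectivity argument (via $s = \tilde r_2^{-1}\tilde r_1 \in \SL_n(\Z[1/p]) \cap \GL_n(\Z_p)$) is actually a useful addition: the paper merely asserts that the map $\tilde{R}(p^l)/\tilde{R}(1) \to M(p^l)/K_p$ is an embedding.

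The sign subtlety you flag at the end is genuine and, interestingly, the paper's own proof glosses over it as well: after normalizing the lift of $\gamma \in \PGL_n(\Z[1/p])$ to lie in $\Mat_n(\Z)$, one only knows a priori that $\det = \pm p^l$. The fix is exactly as routine as you guessed, but let me make it concrete, since it is the one place a reader could stall. If the primitive integral lift $\tilde\gamma$ has $\det\tilde\gamma = -p^l$, do not insist on keeping $\gamma$; replace the coset representative $\gamma$ by $\gamma\bar u$, where $u = \diag(-1,1,\dots,1) \in \GL_n(\Z)$ and $\bar u$ is its image in $\PGL_n$. Since $\GL_n(\Z) \subset \GL_n(\Z_p)$, we have $\bar u \in K_p$, so $\gamma\bar u K_p = \gamma K_p$, while the new normalized integral lift $\tilde\gamma u$ has $\det(\tilde\gamma u) = p^l$, hence lies in $R(p^l)$. (For $n$ odd one can more cheaply pass to $-\tilde\gamma$.) The point is that you are only trying to realize the left $K_p$-coset, not the particular element $\gamma$, inside $\tilde R(p^l)$, and right multiplication by $\PGL_n(\Z)\subset K_p$ is free.

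One small slip: you wrote $\Z[1/p]\cap\Z_p = \Z_{(p)}$ in the surjectivity step; the correct identity is $\Z[1/p]\cap\Z_p = \Z$, which is in fact what you need and what you correctly write in the injectivity step as $\Z[1/p]\cap\Z_{(p)} = \Z$. This does not affect the argument.
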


\begin{proof}
For $ 0 \le l_1 \le\dots\le l_n$ and $\sum l_i =l$ it holds that $\diag(p^{l_1},\dots ,p^{l_n})\in R(p^l)$. By projecting to $\PGL_n(\Q_p)$ it follows that $M(p^l)\subset K_p \tilde{R}(p^l) K_p$. 

We check the other direction. For each element $\gamma  \in R(p^l)$ (i.e., $\gamma \in \Mat_n(\Z)$ with $\det(\gamma)= p^l$) we can find, using the Cartan decomposition in $\GL_n(\Q_p)$, two elements $k_1,k_2\in \GL_n(\Z_p)$ and $a = \diag(p^{l_1},\dots,p^{l_n})$ with $l_1\le\dots\le l_n$ such that $k_1 \gamma k_2= a$. Therefore, $a\in \Mat_n(\Z_p)$, hence $l_1\ge 0$. Moreover, by comparing the determinants we have $\sum l_i = l$. By mapping to $\PGL_n(\Q_p)$ we deduce that $\tilde{R}(p^l)\subset M(p^l)$ and $K_p \tilde{R}(p^l) K_p \subset M(p^l)$. 

Now, the natural embedding $\tilde{R}(p^l)\to M(p^l)$ extends to a natural embedding \[\tilde{R}(p^l)/\tilde{R}(1) \to M(p^l)/K_p.\] We need to show that this map is surjective. By \cref{eq:local class number 1}, the action of $\PGL_n(\Z[1/p])$ on $\PGL_n(\Q_p)/K_p$ is transitive. Therefore, each left coset $M(p^l)/K_p$ has a representative $\gamma \in \PGL_n(\Z[1/p])$. Lifting to $\GL_n(\Q_p)$ and applying the Cartan decomposition, we can write $\gamma = k \diag(p^{l_1},\dots,p^{l_n}) k'$, for some $k,k' \in \GL_n(\Z_p)$ and $0 \le l_1\le\dots\le l_n$ with $\sum l_i = l$. Therefore $\gamma \in \GL_n(\Z[1/p])\cap \GL_n(\Z_p) \subset \Mat_n(\Z)$. Finally, $\det(\gamma) = p^l$ implies that $\gamma \in R(p^l)$, as needed.
\end{proof}

\begin{defn}
Let $l\ge 0$. The \emph{Hecke operator} $T^*(p^l)$ acting on $L^2(\X_0)$ is defined as 
\[\left(T^*(p^l)\varphi\right)(x):=\frac{1}{|A(p^l)|} \sum_{\gamma\in A(p^l)} \varphi(\gamma^{-1} x).\]
\end{defn}
By the facts that $A(p^l)$ are representatives for left  $\PGL_n(\Z)$-cosets of $\PGL_n(\Z) \tilde{R}(p^l)$ which is right and left $\PGL_n(\Z)$-invariant, and $\varphi$ is left $\PGL_n(\Z)$-invariant, the operator $T^*(p^l)$ is well-defined, and does not depend on the choice of $A(p^l)$.

\begin{remark}
In analytic number theory, one usually defines the Hecke operator $\tilde{T}(p^l)$ as \[
\left(\tilde{T}(p^l)\varphi\right)(x)=\frac{1}{p^{l(n-1)/2}}\sum_{\gamma \in \tilde{R}(1)\backslash \tilde{R}(p^l)} \varphi(\gamma x),
\]
see \emph{e.g.}, \cite[\S 9.3.5]{goldfeld2006aut}. If we define $T(p^l) := \frac{p^{l(n-1)/2}}{|\tilde{R}(1)\backslash \tilde{R}(p^l)|}\tilde{T}(p^l)$ 
then $T^*(p^l)$ is indeed the adjoint of $T(p^l)$.
\end{remark}

\begin{remark}
Alternatively, using \cref{eq:GLn vs PGL_n local}, we could have defined a Hecke operator $T^*_\GL(p^l)$ on the space $L^2(\GL_n(\Z)\R^\times \backslash \GL_n(\R) / O_n(\R))$ by 
\[
\left(T^*_\GL(p^l)\varphi\right)(x) = \frac{1}{|R(p^l)/R(1)|}\sum_{\gamma \in R(p^l)/R(1)}\varphi(\gamma^{-1}x)
\]
This definition agrees with the other definition under the equivalence \cref{eq:GLn vs PGL_n local}.
\end{remark}

Using \cref{lem: local-global space equivalence} we may lift $\varphi \in L^2(\X_0)$ to a function $\varphi_\bbA \in L^2(\X_\bbA)$,
by 
\[\varphi_{\bbA}(g_\infty,(e)_p) := \varphi(g_\infty)\]
where $(e)_p:=(1,1,\dots)\in\prod_p\GL_n(\Q_p)$.
We extend it to be left $\PGL_n(\Q)$-invariant and right $K_\bbA$-invariant. Using such identification we can define certain averaging operators on $L^2 (\X_{\bbA})$ using local functions $h_v \in C_c(K_v \backslash \PGL_n(\Q_v) / K_v)$, such as
\[
(R(h_v) \varphi_\bbA)(\dots,x_v,\dots) := \intop_{\PGL_n(\Q_v)}h_v(y)\varphi_\bbA(\dots,x_vy,\dots) \d y.
\]
Recall \cref{defn-hpl}
\[h_{p^l} := \frac{1}{m(M(p^l))}{\One_{M(p^l)}} \in C_c(K_p\backslash \PGL_n(\Q_p) / K_p).\]
We show that the operators $T^*(p^l)$ and $R(h_{p^l})$ are, in fact, classical and adelic versions, respectively, of one another. 

\begin{lemma}\label{global-local-hecke}
It holds that 
$(T^*(p^l)\varphi)_{\bbA} = R(h_{p^l})\varphi_\bbA$.
\end{lemma}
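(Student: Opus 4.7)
The plan is to show directly that both sides, evaluated at a test point of the form $(g_\infty, (e)_p) \in \PGL_n(\bbA)$, give the same value; the equality then propagates to all of $\X_\bbA$ by the joint left $\PGL_n(\Q)$- and right $K_\bbA$-invariance of both sides together with the decomposition $\PGL_n(\bbA) = \PGL_n(\Q)(\PGL_n(\R)\times K_f)$ that was established in the proof of \cref{lem: local-global space equivalence}.

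First I will unfold $R(h_{p^l})\varphi_\bbA$ at the point $(g_\infty, (e)_p)$. Since the Haar measure on $\PGL_n(\Q_p)$ gives $K_p$ volume $1$ and, by \cref{lem:double coset comparison}, $M(p^l)$ is the disjoint union of the left cosets $\gamma K_p$ for $\gamma \in A(p^l)$, the total measure is $m(M(p^l)) = |A(p^l)|$. Using the right $K_p$-invariance of $\varphi_\bbA$ we get
\[
R(h_{p^l})\varphi_\bbA(g_\infty,(e)_p) \;=\; \frac{1}{|A(p^l)|}\sum_{\gamma \in A(p^l)} \varphi_\bbA\bigl(g_\infty,\gamma,(e)_{p'\neq p}\bigr),
\]
where $\gamma$ in the $p$-adic slot is the image of the chosen representative from $\PGL_n(\Z[1/p])$.

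The key step is then to migrate the $\gamma$ from the $p$-adic place to the archimedean place using the diagonal embedding $\PGL_n(\Q)\hookrightarrow\PGL_n(\bbA)$. Because $\gamma$ (and hence $\gamma^{-1}$) has determinant $\pm p^{\pm l}$, it lies in $K_{p'}=\PGL_n(\Z_{p'})$ at every finite prime $p'\neq p$. Applying the left $\PGL_n(\Q)$-invariance of $\varphi_\bbA$ to multiply by $\gamma^{-1}$ diagonally, the $p$-adic component becomes $\gamma^{-1}\gamma=e$, the archimedean component becomes $\gamma^{-1}g_\infty$, and at each $p'\neq p$ we get a factor $\gamma^{-1}\in K_{p'}$ which is absorbed by right $K_\bbA$-invariance. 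This yields
\[
\varphi_\bbA\bigl(g_\infty,\gamma,(e)_{p'\neq p}\bigr) \;=\; \varphi_\bbA\bigl(\gamma^{-1}g_\infty,(e)_p,(e)_{p'\neq p}\bigr) \;=\; \varphi(\gamma^{-1}g_\infty).
\]
Summing gives $R(h_{p^l})\varphi_\bbA(g_\infty,(e)_p) = (T^*(p^l)\varphi)(g_\infty) = (T^*(p^l)\varphi)_\bbA(g_\infty,(e)_p)$, which is what we wanted.

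There is no serious obstacle; the only point requiring care is the bookkeeping that $\gamma\in\PGL_n(\Z[1/p])$ really does lie in $K_{p'}$ at primes $p'\neq p$ (so that right $K_\bbA$-invariance can be invoked to clean up the non-$p$-adic finite places), and that the normalization $m(M(p^l))=|A(p^l)|$ matches the $1/|A(p^l)|$ in the definition of $T^*(p^l)$. Both follow immediately from \cref{lem:double coset comparison} and the chosen normalization $m(K_p)=1$.
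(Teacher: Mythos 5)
Your proof is correct and uses the same ingredients as the paper's: the coset decomposition $M(p^l)=\bigsqcup_\gamma \gamma K_p$ and the equality $m(M(p^l))=|A(p^l)|$ from \cref{lem:double coset comparison}, the fact that representatives $\gamma\in A(p^l)\subset\PGL_n(\Z[1/p])$ land in $K_{p'}$ at every finite $p'\neq p$, and the left $\PGL_n(\Q)$- and right $K_\bbA$-invariance of $\varphi_\bbA$. The only difference is that you expand $R(h_{p^l})\varphi_\bbA$ and migrate $\gamma$ from the $p$-adic slot to the archimedean one, whereas the paper begins with $(T^*(p^l)\varphi)_\bbA$ and migrates in the opposite direction; this is the same argument read backwards.
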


\begin{proof}
From the definitions of the Hecke operator and the adelic lift above, we have
\begin{equation*}
|A(p^l)|(T^*(p^l)\varphi)_{\bbA} (g_\infty,(e)_q) 
= \sum_{\gamma\in A(p^l)} \varphi_{\bbA}(\gamma^{-1} g_\infty,(e)_q)
= \sum_{\gamma\in A(p^l)} \varphi_{\bbA}(g_\infty,(\gamma)_p,(e)_{q\ne p}), 
\end{equation*}
where the second equality follows from the left $\PGL_n(\Q)$-invariance and the right $K_\bbA$-invariance of $\varphi_{\bbA}$. Once again using the right $K_\bbA$-invariance of $\varphi_\bbA$ we can write the above as
\[\intop_{\PGL_n(\Q_p)}\varphi_{\bbA}(g_\infty,(y)_p,(e)_{q\ne p})\sum_{\gamma\in A(p^l)}\One_{\gamma K_p}(y) \d y.\]
According to \cref{lem:double coset comparison},
\[
\sum_{\gamma \in A(p^l)}  \One_{\gamma K_p} = \One_{M(p^l)}.
\]
In addition, it holds that $|A(p^l)| = m(M(p^l))$. 
Therefore, for $g\in \PGL_n(\bbA)$ of the form $g= (g_\infty,(e)_q)$, it holds that 
\begin{align*}
(T^*(p^l)\varphi)_{\bbA} (g) 
&= \frac{1}{m(M(p^l))}
\intop_{y\in \PGL_n(\Q_p)}\varphi_{\bbA}(gy)\One_{M(p^l)}(y) \d y \\
& = (R(h_{p^l}) \varphi_\bbA)(g),
\end{align*}
as needed.
\end{proof}


\begin{remark}\label{rem:hecke on X}
One can similarly define Hecke operators on $\X$, by identifying \[\X = \PGL_n(\Z) \backslash \PGL_n(\R) / \PSO_n(\R).\]
\end{remark}

\subsection{Discrete Spectrum and Weak Weyl Law}\label{subsec:discrete-spectrum}

We will need to use Langlands' spectral decomposition of $L^2(\X_{\bbA})$. In this subsection we describe the discrete part of the spectrum.

The discrete spectrum $L^2_\disc(\PGL_n(\Q) \backslash \PGL_n(\bbA))$ consists of the irreducible representations $\pi$ of $\PGL_n(\bbA)$ occurring discretely in $L^2(\PGL_n(\Q) \backslash \PGL_n(\bbA))$.
By abstract representation theory, we may write
\[
L^2_\disc(\PGL_n(\Q) \backslash \PGL_n(\bbA)) \cong \oplus_\pi V_\pi,
\]
where $V_\pi$ is the $\pi$-isotypic component. 
By the multiplicity one theorem (\cite{shalika1974multiplicity} and \cite{moeglin1989residual}), each representation $\pi$ of $\PGL_n(\bbA)$ appears in the decomposition with multiplicity at most $1$, so $V_\pi$ spans a representation isomorphic to $\pi$. Moreover, $\pi$ is isomorphic to a tensor product of representations $\pi_v$ of $\PGL_n(\Q_v)$ for $v\le\infty$ (see \cite{flath1979decomposition}).

We restrict our attention to spherical representations $\pi$, that is, those having a non-zero $K_{\bbA}$-invariant vector. Since $\pi$ is equivalent to a tensor product of local representations $\pi_v$, and each local representation has a one dimensional $K_v$-invariant subspace, the $K_{\bbA}$-invariant subspace $\pi^{K_\bbA}$ of $\pi$ has dimension $1$. We choose once and for all a unit \[\varphi_\pi \in V_\pi^{K_\bbA}\] for each $\pi$ with $V_\pi \ne \{0\}$.

We denote the set of all such $\varphi_\pi$ by $\calB_{n}$. Then formally we have
\[
L^2_\disc(\X_\bbA) \cong \bigoplus_{\varphi\in \calB_{n}} \C{\varphi}.
\]
The discrete spectrum $\calB_n$ decomposes naturally into two parts, the \emph{cuspidal part} $\calB_{n,\cusp}$ and the \emph{residual part} $\calB_{n,\res}$. 

Recalling from \cref{subsec:spherical representations}, for every place $v$ we may attach the Langlands parameter $\mu_{\varphi,v}= \mu_{\pi_v}$ to $\varphi$.
When $v=\infty$ we denote 
\begin{equation}\label{defn-nu}
\nu_\varphi := \|\mu_{\varphi,\infty}\|.
\end{equation}
By \cite[Equation~3.17]{Duistermaat1979spectra} $\varphi$ is an eigenfunction of the Laplace--Beltrami operator and its eigenvalue is 
\[\|\rho\|^2 - \|\Re( \mu_{\varphi,\infty}) \|^2 + \|\Im (\mu_{\varphi,\infty})\|^2.\]
In particular, for $\nu_\varphi\ge 1$ the Laplace eigenvalue of $\varphi$ is $\asymp \nu_\varphi^2$.

We will also denote
\[
\theta_{\varphi,p} := \theta(\mu_{\varphi,p}),
\]
according to \cref{defn-theta}.

We define
\begin{equation}\label{defn-ft}
    \calF_T := \{\varphi \in \calB_{n,\cusp}\mid \nu_\varphi \le T\},
\end{equation}
We record the statement of \emph{Weak Weyl law} due to Donnelly, which gives an upper bound of the cardinality $\calF_T$ as $T\to\infty$.
\begin{prop}[\cite{donnelly1982cuspidal}]\label{prop:Weyl law upper bound}
We have
$|\calF_T| \ll T^d$ as $T\to\infty$.
\end{prop}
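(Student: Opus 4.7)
The plan is to run a pre-trace inequality against a positive test function concentrated near the identity, in the spirit of Donnelly's original argument. Let $\varepsilon := c/T$ for a sufficiently small absolute constant $c>0$ (tuned to the constant $C$ of \cref{lem:k_0 conditions}), and set $k := k_\varepsilon * k_\varepsilon^*$, where $k_\varepsilon$ is the test function of \cref{lem:k_0 conditions} and $k_\varepsilon^*(g) := \overline{k_\varepsilon(g^{-1})}$. Then $k$ is bi-$K_\infty$-invariant, supported in $B_{2\varepsilon}$, and by Cauchy--Schwarz satisfies $\|k\|_\infty \le \|k_\varepsilon\|_2^2 \ll \varepsilon^{-d} = T^d$. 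Its spherical transform is $\tilde{k}(\mu) = |\tilde{k}_\varepsilon(\mu)|^2$, which is non-negative at every spherical unitary parameter appearing in the Plancherel decomposition, and satisfies $\tilde{k}(\mu) \gg 1$ whenever $\|\mu\| \le T$ by property $(5)$.

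Next, form the automorphic kernel $K(x,y) := \sum_{\gamma \in \PGL_n(\Z)} k(x^{-1}\gamma y)$ on $\X_0 \times \X_0$, the integral kernel of the convolution operator $R(k)$ on $L^2(\X_0)$. Langlands' spectral decomposition (\cref{subsec:discrete-spectrum}) yields the pre-trace identity
\[
K(x,x) = \sum_{\varphi \in \calB_{n,\cusp}} \tilde{k}(\mu_{\varphi,\infty}) |\varphi(x)|^2 + \sum_{\varphi \in \calB_{n,\res}} \tilde{k}(\mu_{\varphi,\infty}) |\varphi(x)|^2 + (\text{continuous contribution}),
\]
in which every summand on the right is non-negative. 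Geometrically, for $x$ in a fixed bounded region of $\X_0$ the set $\{\gamma \in \PGL_n(\Z) : d(x,\gamma x) \le 2\varepsilon\}$ has cardinality $O(1)$ once $\varepsilon$ is small, so $K(x,x) \ll \|k\|_\infty \ll T^d$. Restricting the spectral side to $\varphi \in \calF_T$, where each weight $\tilde{k}(\mu_{\varphi,\infty})$ is $\gg 1$, and integrating over any bounded region $\Omega \subset \X_0$ yields
\[
\sum_{\varphi \in \calF_T} \int_\Omega |\varphi(x)|^2 \,dx \;\ll\; T^d \, m(\Omega).
\]

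To conclude, I need a uniform cuspidal concentration statement: there exists a compact $\Omega \subset \X_0$ such that $\int_\Omega |\varphi|^2 \ge 1/2$ for every $L^2$-normalized cuspidal eigenfunction, independently of $\nu_\varphi$. This is the only non-routine ingredient and the main obstacle. The mechanism is that cuspidality kills the zeroth Fourier coefficient of $\varphi$ along each unipotent radical, so repeated integration by parts against the invariant Laplacian converts decay far into the cusp into a polynomial factor in $\nu_\varphi$; a Sobolev embedding in the cusp coordinates then produces a pointwise bound on $|\varphi|^2$ deep in the cusp which, when integrated, decays super-polynomially in the cusp depth while losing only a fixed polynomial in $\nu_\varphi$. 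Choosing $\Omega$ once and for all to include every cusp up to a sufficient (absolute) depth then gives the required concentration. Combined with the displayed inequality, this yields $|\calF_T| \ll T^d$, as claimed.
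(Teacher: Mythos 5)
The paper does not prove this proposition; it is cited as a result of Donnelly \cite{donnelly1982cuspidal}, and your pre-trace-inequality strategy is indeed essentially the method used there. So in spirit you have the right approach. However, there is a genuine gap in the one step you yourself flag as non-routine.

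The gap is the claimed uniform cuspidal concentration on a compact set $\Omega$ that is \emph{independent of $T$}. Your own heuristic already shows this cannot work as stated: you argue that the mass in the cusp decays super-polynomially in the cusp depth but only ``loses a fixed polynomial in $\nu_\varphi$,'' i.e.\ you are asserting a bound of the shape $\int_{\Ht(x) > Y}|\varphi(x)|^2\,dx \ll \nu_\varphi^{A} Y^{-N}$. With $Y$ absolute this is $\ll \nu_\varphi^{A}$, which is large for large $\nu_\varphi$, so it does not give $\int_\Omega |\varphi|^2 \ge 1/2$ on a fixed $\Omega$. Already for $n=2$ one can see the difficulty: the $n$-th Fourier coefficient of a Maass cusp form with spectral parameter $r$ carries $L^2$-mass concentrated around $y \asymp r/n$, so an appreciable fraction of $\|\varphi\|_2^2$ lives at heights up to $\asymp \nu_\varphi$, not in a fixed compact region.

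The fix (and, I believe, what Donnelly actually does) is to take $\Omega = \Omega_T$ to be the portion of a Siegel domain up to cusp-height $\asymp T$ (or $T^{1+\delta}$); then the exponential decay of cusp forms beyond height $\asymp \nu_\varphi$ gives $\int_{\Omega_T}|\varphi|^2 \ge 1/2$ for every $\varphi\in\calF_T$. This costs you nothing on the geometric side, but that needs to be checked explicitly rather than taken for granted: as $x$ moves into the cusp the number of $\gamma\in\PGL_n(\Z)$ with $\dist(x,\gamma x)\le 2\varepsilon$ grows (translations by the unipotent stabilizer of the cusp become $\varepsilon$-short), so $K(x,x)$ is no longer $O(1)\cdot \|k\|_\infty$ uniformly on $\Omega_T$. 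One must verify that the growth of the near-point count is at most polynomial in the cusp height and is dominated by the polynomial decay of the invariant measure, so that $\int_{\Omega_T}K(x,x)\,dx \ll \|k\|_\infty \ll T^d$ still holds. With both modifications in place — $\Omega_T$ growing linearly in $T$, and the geometric bound integrated over $\Omega_T$ — your argument closes and gives $|\calF_T|\ll T^d$. As written, with an absolute $\Omega$, the concentration step is incorrect and the proof does not go through.
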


As a matter of fact, using the corresponding lower bounds by M\"uller in \cite{muller2007weyl} and Lindenstrauss--Venkatesh in \cite{lindenstrauss2007existence}, we know that as $T\to \infty$
\begin{equation}\label{eq:full Weyl law}
|\calF_T| = C T^d(1+o(1)),
\end{equation}
for some explicit constant $C$, but we will not need this stronger result.

\subsection{The Generalized Ramanujan Conjecture and Sarnak's Density Hypothesis}
\label{subsec:GRC and density}

Let $\varphi\in\B_n$ be a spherical discrete series. For every $h_v\in C_c(K_v\backslash \PGL_n(\Q_v)/ K_v)$ one has the operator $R(h_v)$, as defined in \cref{subsec:adelic formulation}, and it holds that
\[
R(h_v) \varphi = \tilde{h}_v(\mu_{\varphi,v})\varphi,
\]
where $\tilde{h}_v$ denotes the spherical transform of $h_v$.
%
When $v=p$, combining \cref{global-local-hecke} and \cref{lem:bounds on lambda from theta} we obtain that for every $x\in \X$,
\[
T^*(p^l)\varphi(x) = \lambda_{\varphi}(p^l)p^{-l(n-1)/2}\varphi(x)
\]
such that for all $\delta>0$
\[
|\lambda_\varphi(p^l)|\ll_\delta p^{l(\theta_{\varphi,p}+\delta)}.
\]

Let $\varphi\in \calB_{n,\cusp}$ be a spherical cusp form. The \emph{Generalized Ramanujan Conjecture} (GRC) predicts that the Langlands parameter of $\varphi$ at every place $v$ are tempered, which is equivalent in our notations to
\[
\theta_{\varphi,v}=0,\quad v\le\infty
\]
The GRC at the place $v=p$ implies essentially the sharpest bounds on the Hecke eigenvalues, in the following form. For every $p$ prime, $l\ge 0$, and $\delta>0$,
\[
|\lambda_{\varphi}(p^l)|\ll_\delta p^{l\delta},
\]
as $p^l\to\infty$.

The GRC is out of reach of current technology, even for $n=2$. However, we have various bounds towards it; see \cite{sarnak2005notes} for a detailed discussion.

The bounds of Hecke eigenvalues can be understood in terms of the bounds of $\theta_{\varphi,p}$. For $n=2$, the best bounds are due to Kim--Sarnak \cite{kim2003functoriality}, for $n=3$ and $n=4$, the same are due to Blomer--Brumley \cite[Theorem 1]{blomer2011ramanujan}, and for $n\ge 5$, they are due to Luo--Rudnick--Sarnak \cite{luo1999generalized}. For $\GL(n)$, these bounds are given by 
\begin{equation}\label{bound-towards-ramanujan}
|\theta_{\varphi,p}|\, \le \theta_n
\end{equation}
where
\[\theta_2=\frac{7}{64},\theta_3=\frac{5}{14},\theta_4=\frac{9}{22},\]
and
\begin{equation}\label{eq:luo-rudnick-sarnak}
\theta_n = \frac{1}{2}-\frac{1}{n^2+1},\quad n\ge 5.    
\end{equation}

Our problem requires estimates of the Hecke eigenvalues which is stronger than \cref{eq:luo-rudnick-sarnak}. On the other hand, we do not require a strong bound of individual Hecke eigenvalue, but only the GRC \emph{on average}. 
In general, one expects \emph{Sarnak's Density Hypothesis} to hold; see \cite{sarnak1991diophantine,sarnak1991bounds,golubev2020sarnak,blomer2019density,jana2021application,assing2022density} for various aspects of this hypothesis. 

The hypothesis asserts that for every $\delta>0$, for a nice enough finite family $\calF$ of cusp forms, one has 
\begin{equation}\label{average-GRC}
\sum_{\varphi\in\calF}|\lambda_\varphi(p^l)|^2 \ll_\delta \left(p^{l}|\calF|\right)^{\delta}\left(|\calF| + p^{l(n-1)}\right)
\end{equation}
uniformly in $p,l$ and as $|\calF|\to \infty$.

Informally, the above says that \emph{larger Hecke eigenvalues occur with smaller density}.
Note that the above follows from GRC. On the other hand, the occurrence of the summand $p^{l(n-1)}$ represents as if the trivial eigenfunction appeared in $\calF$. The hypothesis above is an interpolation between the two cases. As a matter of fact, we expect that it will hold for natural families of \emph{discrete forms}, not only for \emph{cusp forms}. 

In this paper we will work on a specific kind of family $\calF$, namely, $\calF_T$ as defined in \cref{defn-ft}
whose cardinality is $\asymp T^{d}$ via \cref{eq:full Weyl law}. 
We will need a hypothesis in the following form.
\begin{conj}[Density hypothesis, Hecke eigenvalue version]
\label{conj:density conj}
Let $p$ be a fixed prime. For every $l\ge 0$ and $T\ge 1$ one has
\[\sum_{\varphi\in\calF_T} \left|\lambda_\varphi(p^l)\right|^2
\ll_{p,\delta}\left(Tp^l\right)^{\delta}\left(T^{d}+p^{l(n-1)}\right),\]
for every $\delta>0$
\end{conj}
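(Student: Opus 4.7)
The natural strategy is the Arthur--Selberg (pre-)trace formula applied to the adelic test function
\[f = f_\infty \otimes (h_{p^l} * h_{p^l}^*) \otimes \bigotimes_{q \ne p,\infty} \mathbf{1}_{K_q},\]
where $f_\infty = k_\varepsilon * k_\varepsilon^*$ with $\varepsilon \asymp T^{-1}$ and $k_\varepsilon$ the bump from \cref{lem:k_0 conditions}. Properties (4)--(5) of that lemma make $\tilde f_\infty$ non-negative with $\tilde f_\infty(\mu_{\varphi,\infty}) \gg 1$ throughout $\calF_T$, while by \cref{lem:bounds on lambda from theta} the $p$-adic factor contributes spectral weight $|\tilde h_{p^l}(\mu_{\varphi,p})|^2 \asymp p^{-l(n-1)}\,|\lambda_\varphi(p^l)|^2$ on each $\varphi$. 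Consequently, the discrete spectral side of the trace formula dominates $p^{-l(n-1)}\sum_{\varphi \in \calF_T}|\lambda_\varphi(p^l)|^2$, modulo Eisenstein contributions.

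On the geometric side I would first isolate the identity term, which is $\vol(\X_\bbA) f_\infty(e) f_p(e) \ll T^d \cdot p^{-l(n-1)}$, using $\|k_\varepsilon\|_2^2 \ll T^d$ and $\|h_{p^l}\|_2^2 \asymp p^{-l(n-1)}$. After multiplying through by $p^{l(n-1)}$ this produces precisely the $T^d$ summand of the conjectured bound. The $p^{l(n-1)}$ summand I expect to trace back to the non-tempered spectrum (residual and Eisenstein): such forms are parameterized by proper Levi subgroups and so satisfy a strictly lower-dimensional Weyl law $\ll T^{d'}$ with $d' < d$, while their Hecke eigenvalues can be as large as $p^{l(n-1)/2}$; the product has size $\ll p^{l(n-1)}T^{d'}\ll T^{\delta}(T^d + p^{l(n-1)})$ after balancing. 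This contribution can be treated inductively using the Moeglin--Waldspurger description of the residual spectrum together with the $L^2$-bounds on truncated Eisenstein series discussed in \cref{subsec:L2 bounds on Eisenstein}.

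The hard part is bounding the non-identity, non-central part of the geometric side,
\[\sum_{\gamma} O_\gamma(f_\infty)\, O_\gamma(h_{p^l}*h_{p^l}^*),\]
where $\gamma$ ranges over $\PGL_n(\Q)$-classes meeting $\supp(f)$, i.e.\ $\gamma \in \PGL_n(\Z[1/p])$ of $p$-height $\le 2l$ whose archimedean component lies within $O(\varepsilon)$ of the identity. One must prove these contribute $\ll (Tp^l)^\delta (T^d p^{-l(n-1)} + 1)$. For $n = 2, 3$ this is precisely what the explicit Kuznetsov formulas of the respective rank accomplish (Blomer; Blomer--Buttcane--Raulf), since the arithmetic side is rewritten as hyper-Kloosterman sums that admit power-saving estimates. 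For general $n$, one would need either a sufficiently explicit $\GL(n)$-Kuznetsov formula with workable generalized Bessel transforms, or direct bounds on $p$-adic orbital integrals via lattice-point counts for characteristic polynomials in archimedean and $p$-adic balls. This last counting problem is essentially equivalent to Sarnak's density hypothesis in higher rank and is the principal obstruction --- which is precisely why the statement is formulated here as a conjecture rather than a theorem.
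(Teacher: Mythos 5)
The statement you were handed is a \emph{conjecture}, not a theorem: the paper does not prove it, and neither do you. It is assumed as a hypothesis (Sarnak's density hypothesis for $\GL(n)$), and the entire point of Theorem~2(2) of the paper is that the optimal Diophantine exponent follows \emph{conditionally} on it. So there is no ``paper proof'' to compare your argument against, and you correctly identify this in your final sentence.

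That said, your outline is a reasonable reconstruction of the surrounding context. The known cases are exactly as you describe: for $n=2$ the paper records \cref{subconvex-density-2}, which follows from \cite[Lemma~1]{blomer2014sato}, and for $n=3$ it records \cref{subconvex-density-3}, from \cite[Theorem~3.3]{buttcane2020plancherel}; both ultimately rest on Kuznetsov-type formulas in the respective rank, as you say. The paper's own contribution in \cref{subsec:GRC and density} is not a proof but a reformulation: \cref{lem:density conjecture equivalence} proves \cref{conj:density conj} equivalent to the Langlands-parameter counting statement \cref{conj:density conj alternative}, which the paper regards as the more conceptual version. Your heuristic that the identity term in the trace formula produces the $T^{d}$ summand while the non-tempered part produces the $p^{l(n-1)}$ summand matches the way the paper motivates the estimate (``the occurrence of the summand $p^{l(n-1)}$ represents as if the trivial eigenfunction appeared in $\calF$''). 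One caveat: the chain of bounds $p^{l(n-1)}T^{d'} \ll T^{\delta}(T^{d}+p^{l(n-1)})$ does not hold for generic $d'<d$; the balancing you need is actually a weighted AM--GM between the Weyl exponent of the relevant Levi and the exponent of the Hecke eigenvalue, and one must verify the relevant convexity inequality shape by shape (the paper carries out exactly such a shape-by-shape verification in \cref{sec:proof of main theorem}, though for a related quantity). But since the statement is unproven in any case, this imprecision does not change the verdict: your write-up correctly diagnoses the status of the conjecture and the nature of the principal obstruction (bounds on the non-identity geometric side, or equivalently lattice-point / orbital-integral counts), which is precisely why it appears in the paper as a conjecture.
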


\begin{remark}
\cref{conj:density conj} should be compared to the orthogonality conjecture which asserts that
\[\sum_{\varphi\in\calF_T}\lambda_\varphi(p_1^{l_1})\overline{\lambda_\varphi(p_2^{l_2})}\sim\delta_{p_1^{l_1}=p_2^{l_2}}T^d,\]
as $T\to\infty$; see \emph{e.g.}, \cite[Theorem 1, Theorem 7]{jana2021application}, \cite[Theorem 2]{blomer2019density} for more details.
\end{remark}

Motivated by the original density hypothesis of Sarnak (see \cite{sarnak1991diophantine}) one can propose an analogous density hypothesis in terms of the Langlands parameters in higher rank; see \cite{blomer2019density,jana2021application}. In fact, \cref{conj:density conj} is nothing but a reformulation of \emph{Sarnak's density hypothesis for higher rank} which we describe below.

\begin{conj}[Density Conjecture, Langlands parameter version]\label{conj:density conj alternative}
For every $0\le \theta_0 \le (n-1)/2$ and $T\ge 1$ one has
\[
|\{\varphi \in \calF_T\mid \theta_{\varphi,p} \ge \theta_0 \}| \ll_{\delta,p} T^{d \left(1-\frac{2\theta_0}{n-1}\right)+\delta},
\]
for every $\delta>0$
\end{conj}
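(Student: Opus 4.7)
Since this statement is conjectural for general $n$, my plan is to reduce it to the Hecke-eigenvalue formulation \cref{conj:density conj}, which is more amenable to trace-formula methods and has been established for small $n$. For the reduction, observe that by \cref{lem:bounds on lambda from theta} together with the fact (visible from the measure computation preceding \cref{size-mpl}) that the double coset $K\diag(1,\dots,1,p^l)K$ carries the dominant mass in $M(p^l)$, one has, up to $p^{l\delta}$ factors, a two-sided estimate $|\lambda_\varphi(p^l)| \asymp p^{l\theta_{\varphi,p}}$ realizing the extremal behavior for forms at the threshold. Assuming \cref{conj:density conj} and dropping all but the $\varphi$ with $\theta_{\varphi,p} \ge \theta_0$ yields
\[
|\{\varphi \in \calF_T \mid \theta_{\varphi,p} \ge \theta_0\}| \cdot p^{2l\theta_0} \ll (Tp^l)^{\delta}\bigl(T^d + p^{l(n-1)}\bigr).
\]
Choosing $l$ so that $p^{l(n-1)} \asymp T^d$, i.e.\ $l \asymp (d/(n-1))\log_p T$, balances the right-hand side at $\asymp T^{d+\delta'}$; dividing by $p^{2l\theta_0} \asymp T^{2d\theta_0/(n-1)}$ produces the claimed exponent $d(1 - 2\theta_0/(n-1))$ in \cref{conj:density conj alternative}.

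To attack \cref{conj:density conj} itself, the natural tool is an amplified pretrace formula. One takes the smooth spectral test function $k_\varepsilon$ from \cref{lem:k_0 conditions} with $\varepsilon \asymp T^{-1}$, convolves with the Hecke operator $T^*(p^l)$ from \cref{subsec:Hecke operators}, evaluates the resulting kernel on the diagonal in $\X_0$, and integrates over a compact region. The spectral expansion produces the weighted sum $\sum_{\varphi \in \calB_n} |\lambda_\varphi(p^l)|^2 |k_\varepsilon \ast \varphi(x)|^2$ plus Eisenstein contributions that can be bounded using the $L^2$-estimates referenced in \cref{subsec:L2 bounds on Eisenstein}, while on the geometric side the trace formula decomposes as a sum over conjugacy classes: the identity contributes the expected $T^d$ main term, the central Hecke-orbit contributes $p^{l(n-1)}$, and residual terms arise from non-central semisimple and unipotent classes which one needs to bound uniformly in both $T$ and $p^l$.

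The main obstacle is exactly this last uniform bound for $n \ge 4$. For $n = 2, 3$ the required density statements have been obtained by Blomer \cite{blomer2013applications} and Blomer--Buttcane--Raulf \cite{blomer2014sato} using the Kuznetsov formula, which bypasses the regular-elliptic conjugacy classes; no satisfactory analogue of Kuznetsov is available in full generality for higher rank. A direct attack via the Arthur--Selberg trace formula would require sharp orbital-integral estimates uniform in the amplifier $p^l$ that go well beyond current technology, and alternative approaches via Langlands functoriality would transfer the problem to classical groups only at the cost of assuming further deep conjectures. This explains why the statement is recorded here as a hypothesis and is used only as an assumption for the conditional part of \cref{thm:kappa theorem intro}.
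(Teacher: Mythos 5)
Your overall strategy---treating the statement as a conjecture and reducing it to the Hecke-eigenvalue version \cref{conj:density conj}---is the same as the paper's (this is one direction of \cref{lem:density conjecture equivalence}), and your closing discussion of why \cref{conj:density conj} itself is out of reach for $n\ge 4$ is accurate. However, the reduction as you wrote it has a genuine gap: you assert a \emph{two-sided} estimate $|\lambda_\varphi(p^l)| \asymp p^{l\theta_{\varphi,p}}$ (up to $p^{l\delta}$) for the single value of $l$ with $p^{l(n-1)}\asymp T^d$, and you justify the lower bound by the observation that the coset $K\diag(1,\dots,1,p^l)K$ carries the dominant mass of $M(p^l)$. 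That mass computation controls the normalization of $h_{p^l}$, not the size of the spherical function evaluated there; \cref{lem:bounds on lambda from theta} gives only the upper bound. The pointwise lower bound is simply false for individual $l$: the eigenvalue $\lambda_\varphi(p^l)$ is an oscillating Weyl-group sum in the Satake parameters and can be anomalously small (even zero) for particular $l$, so dropping all but the forms with $\theta_{\varphi,p}\ge\theta_0$ and dividing by $p^{2l\theta_0}$ is not justified.

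The repair is exactly what the paper does: instead of choosing one $l$, sum \cref{conj:density conj} over all $l$ with $p^{l(n-1)}\le T^d$, obtaining $\sum_{\varphi\in\calF_T}\sum_{l}|\lambda_\varphi(p^l)|^2\ll T^{d+\delta}$, and then invoke the \emph{averaged} lower bound of Blomer (\cite[Lemma~4]{blomer2019density}), which states that for $k\ge n+1$,
\[
\sum_{l=0}^{k}\left|\lambda_\varphi(p^l)\right|^2 \gg_p p^{2k\theta_{\varphi,p}}.
\]
This recovers $\sum_{\varphi\in\calF_T} T^{d\,2\theta_{\varphi,p}/(n-1)}\ll T^{d+\delta}$, from which restricting to $\theta_{\varphi,p}\ge\theta_0$ gives the claimed count. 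The averaging over $l$ is not a cosmetic difference---it is the step that circumvents the failure of the pointwise lower bound, and your argument does not work without it.
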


Following \cite{blomer2019density} 
here we prove the equivalence of \cref{conj:density conj} and \cref{conj:density conj alternative}.

\begin{prop}\label{lem:density conjecture equivalence}
\cref{conj:density conj} and \cref{conj:density conj alternative} are equivalent.
\end{prop}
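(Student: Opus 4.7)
The plan is to prove both implications directly: the direction \cref{conj:density conj alternative} $\Rightarrow$ \cref{conj:density conj} follows quickly from the pointwise upper bound in \cref{lem:bounds on lambda from theta}, while the reverse direction requires a complementary lower bound on Hecke eigenvalues in terms of $\theta_{\varphi,p}$.

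First I would prove \cref{conj:density conj alternative} $\Rightarrow$ \cref{conj:density conj} by a dyadic decomposition of $\calF_T$ according to $\theta_{\varphi,p}$. Fix a parameter $J$ (eventually a power of $\log(Tp^l)$), set $\theta_0^{(j)} := j(n-1)/(2J)$ for $j=0,\dots ,J$, and write $\calF_T = \bigsqcup_{j=0}^{J-1} \calF_T^{(j)}$ with $\calF_T^{(j)}:=\{\varphi\in\calF_T : \theta_{\varphi,p}\in[\theta_0^{(j)},\theta_0^{(j+1)})\}$. By \cref{lem:bounds on lambda from theta}, each $\varphi\in\calF_T^{(j)}$ satisfies $|\lambda_\varphi(p^l)|^2 \ll_\delta p^{2l(\theta_0^{(j+1)}+\delta)}$, while \cref{conj:density conj alternative} gives $|\calF_T^{(j)}|\ll_\delta T^{d(1 - 2\theta_0^{(j)}/(n-1))+\delta}$. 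Multiplying and summing yields
\[
\sum_{\varphi\in\calF_T}|\lambda_\varphi(p^l)|^2 \ll_\delta (Tp^l)^{\delta}\sum_{j=0}^{J-1} p^{2l\theta_0^{(j)}}\, T^{d(1-2\theta_0^{(j)}/(n-1))}.
\]
Since the exponent is linear in $\theta_0^{(j)}$, the maximum is attained at the endpoints $j=0$ (contributing $T^d$) or $j=J-1$ (contributing $p^{l(n-1)}$), giving the bound $(Tp^l)^{\delta}(T^d+p^{l(n-1)})$ of \cref{conj:density conj} after adjusting $\delta$.

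For the reverse direction \cref{conj:density conj} $\Rightarrow$ \cref{conj:density conj alternative}, the essential input is a matching \emph{lower bound}: for every $\varphi$ with $\theta_{\varphi,p}\ge\theta_0$ one has
\[
\sum_{l\in[L,2L]}|\lambda_\varphi(p^l)|^2 \gg_\delta L\, p^{2L\theta_0-L\delta}.
\]
This follows by writing $\lambda_\varphi(p^l)$ via Macdonald's formula as a symmetric polynomial in the Satake parameters $\alpha_i = p^{\mu_{\varphi,p,i}}$, one of which has modulus $p^{\theta_{\varphi,p}}\ge p^{\theta_0}$; averaging $|\lambda_\varphi(p^l)|^2$ over the dyadic window $[L,2L]$ kills the oscillation between distinct Satake parameters and leaves this dominant term visible. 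Granting this and summing \cref{conj:density conj} over $l\in[L,2L]$, a Chebyshev-type comparison gives
\[
|\{\varphi\in\calF_T : \theta_{\varphi,p}\ge\theta_0\}|\cdot L\, p^{2L\theta_0-L\delta} \ll (Tp^L)^{\delta}\, L\,(T^d+p^{L(n-1)}).
\]
Choosing $L$ with $p^{L(n-1)}\asymp T^d$, i.e.\ $L\asymp (d\log T)/((n-1)\log p)$, the right-hand side becomes $T^{d+O(\delta)}L$, and division by $Lp^{2L\theta_0}$ produces
\[
|\{\varphi\in\calF_T : \theta_{\varphi,p}\ge\theta_0\}|\ll T^{d-2L\theta_0\log_T p + O(\delta)}=T^{d(1-2\theta_0/(n-1))+O(\delta)},
\]
which is \cref{conj:density conj alternative} after absorbing the $\delta$.

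The main obstacle is the Hecke eigenvalue lower bound used in the second direction. While \cref{lem:bounds on lambda from theta} supplies the upper bound for free, a matching lower bound for a single $l$ is obstructed by possible destructive interference among the Satake parameters when several of them have comparable modulus. The standard remedy, which is exactly the one employed in \cite{blomer2019density,jana2021application}, is precisely the dyadic averaging over $l\in[L,2L]$ described above; the resulting smoothed statement isolates the dominant Satake parameter of modulus $p^{\theta_{\varphi,p}}$ and is what makes the equivalence go through.
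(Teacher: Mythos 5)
Your two-directional argument follows the paper's blueprint closely: the easy direction (\cref{conj:density conj alternative} implies \cref{conj:density conj}) is a $\theta$-binning argument, which you phrase as a dyadic decomposition and the paper phrases as partial summation, but these are the same idea and both boil down to the linear-in-$\theta_0$ exponent being maximized at the endpoints. For the hard direction both proofs sum \cref{conj:density conj} over a range of $l$ chosen so that $p^{(n-1)l}$ never exceeds $T^d$, feed in a Hecke-eigenvalue lower bound, and finish with Chebyshev. The one substantive gap is the lower bound itself: you assert $\sum_{l\in[L,2L]}|\lambda_\varphi(p^l)|^2\gg_\delta L\,p^{2L\theta_0-L\delta}$ for $\theta_{\varphi,p}\ge\theta_0$ and gesture at Macdonald's formula and dyadic averaging, but you do not prove it, and isolating the dominant Satake parameter when several have comparable modulus is precisely the subtle part. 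The paper instead invokes \cite[Lemma~4]{blomer2019density}, which gives the cleaner cumulative estimate $\sum_{l=0}^{k}|\lambda_\varphi(p^l)|^2\gg_p p^{2k\theta_{\varphi,p}}$ for $k\ge n+1$; this drops straight into the argument (sum \cref{conj:density conj} over all $l$ with $p^{l(n-1)}\le T^d$ rather than over a dyadic window) and sidesteps any oscillation analysis. You should either supply a genuine proof of your dyadic-window lower bound or swap it for the cumulative version, which is what the cited references actually provide.
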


\begin{proof}
Assume that \cref{conj:density conj} holds. Given $T$ sufficiently large, summing over $l$ such that $p^{l(n-1)}\le T^d$ we get 
\[\sum_{\varphi\in\calF_T} \sum_{l: p^{l(n-1)} \le T^d}\left|\lambda_\varphi(p^l)\right|^2
\ll_{p,\delta}T^{d+\delta}.\] 
By \cite[Lemma~4]{blomer2019density}, for $k\ge n+1$ 
we have
\[
\sum_{l=0}^{k}\left|\lambda_\varphi(p^l)\right|^2 \gg_p p^{2k \theta_{\varphi,p}}.
\]
Therefore, for $T\ge p^{(n-1)(n+1)/d}$,
\[
\sum_{l:p^{l(n-1)}\le T^d}\left|\lambda_\varphi(p^l)\right|^2 \gg_p T^{d\frac{2\theta_{\varphi,p}}{n-1}},
\]
and
\[
\sum_{\varphi\in\calF_T} T^{d\frac{2\theta_{\varphi,p}}{n-1}}
\ll_{p,\delta}T^{d+\delta},\] 
and this implies \cref{conj:density conj alternative}.

For the other direction, assume \cref{conj:density conj alternative}. By \cref{lem:bounds on lambda from theta}, 
\[\sum_{\varphi\in\calF_T} \left|\lambda_\varphi(p^l)\right|^2 \ll_\delta \sum_{\varphi\in\calF_T} p^{l (2\theta_{\varphi,p}+ \delta)}.
\]
Using partial summation, this is bounded by the main term
\begin{align*}
&\ll_{p,\delta}\intop_{0}^{(n-1)/2} |\{\varphi \in \calF_T \mid \theta_{\varphi,p} \ge \theta_0 \}| p^{l(2\theta_0+\delta)}\d \theta_0 \\ &\ll_{p,\delta} (p^lT)^\delta\intop_{0}^{(n-1)/2} T^{d\left(1-\frac{2\theta_0}{n-1}\right)} p^{2l\theta_0}\d \theta_0
\end{align*}
plus the secondary terms
\begin{align*}
    |\calF_T|p^{l\delta}+ |\{\varphi \in \calF_T \mid  \theta_{\varphi,p}\ge (n-1)/2 \}|p^{l(n-1+\delta)}.
\end{align*}
For $0\le \theta_0 \le \frac{n-1}{2}$ we have
\[
T^{d\left(1-\frac{2\theta_0}{n-1}\right)}p^{2l\theta_0} \ll T^d+p^{l(n-1)}.
\]
Therefore, the main term is bounded by $(p^lT)^{\delta}(T^d+p^{l(n-1)})$. Similarly, using \cref{conj:density conj alternative}, the secondary terms are also bounded by the same value.
We therefore get \cref{conj:density conj}.
\end{proof}

\cref{conj:density conj} is actually a \emph{convexity estimate}, just as \cref{conj:density conj alternative}. As a matter of fact, one can replace in it $\calB_{n,\cusp}$ by $\calB_{n}$, as we will essentially show in \cref{subsec:pf- discrete spectrum} 
that the residual spectrum also satisfies this bound. 
However, for the cuspidal spectrum itself one should expect better than what \cref{conj:density conj} asserts, \emph{i.e.} the \emph{subconvex} estimates, which are indeed available for $n=2$ \cite[Lemma 1]{blomer2014sato} and $n=3$ \cite[Theorem~3.3]{buttcane2020plancherel}. For $n\ge 3$ Blomer \cite{blomer2019density} has proved a subconvex estimate for Hecke congruence subgroups in the level aspect. Recently, Blomer and Man \cite{blomer2022kloosterman} (improving upon the work of Assing and Blomer \cite{assing2022density}) proved subconvex estimates for principal congruence subgroups, again in the level aspect.

We describe the results for $n=2,3$ here which we will use latter. Although, we only need the convexity estimates for our proofs, we record the strongest known estimates. Let $m$ be of the form $p^l$ for some fixed prime\footnote{The results below are also true for the extension of $\lambda_\varphi$ to a multiplicative function on $\N$.} $p$.

\begin{prop}\label{subconvex-density-2}
Let $n=2$. We have
\[\sum_{\varphi\in\calF_T} \left|\lambda_\varphi(m)\right|^2
\ll_{\delta}\left(Tm\right)^{\delta}\left(T^{d}+m^{1/2}\right),\]
for every $\delta>0$.
\end{prop}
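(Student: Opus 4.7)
The plan is to apply the Kuznetsov trace formula for $\GL(2)/\Q$, which is the standard tool for establishing second-moment bounds on Hecke eigenvalues with near-square-root savings in the Hecke parameter. Choose a non-negative spectral test function $h$ on $\R$, even, holomorphic in a strip, that dominates the indicator of $[-T,T]$ and has Plancherel mass of order $T^2$ concentrated in that window (e.g.\ a Gaussian $h(t)=\exp(-t^2/T^2)$ is a typical choice). Applied with Hecke indices $(m,m)$, the Kuznetsov formula equates a spectral sum
\[\sum_{\varphi\in\calB_{2,\cusp}}\frac{|\lambda_\varphi(m)|^2}{L(1,\mathrm{sym}^2\varphi)}h(\nu_\varphi)+\text{(Eisenstein contribution)}\]
to a geometric expression consisting of a diagonal term plus an off-diagonal sum of Kloosterman sums twisted by a Bessel transform $\mathcal{B}h$.

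The first step would be to pass from this weighted sum to the unweighted sum over $\calF_T$. The Eisenstein contribution is non-negative, so it can be dropped for a lower bound; the denominators satisfy the Hoffstein--Lockhart--type estimate $L(1,\mathrm{sym}^2\varphi)\gg_\delta(1+\nu_\varphi)^{-\delta}$, which costs only a $T^\delta$. Thus the spectral side controls $T^{-\delta}\sum_{\varphi\in\calF_T}|\lambda_\varphi(m)|^2$ from below, and it suffices to estimate the geometric side from above. The diagonal (delta) term gives a contribution $\asymp \int|h(t)|^2\,t\tanh(\pi t)\,\mathrm{d}t \asymp T^2$, which matches the $T^d=T^2$ term on the right of the stated inequality.

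The main work will be the Kloosterman side $\sum_{c\ge1}c^{-1}S(m,m;c)\,\mathcal{B}h(4\pi m/c)$. Using the integral representations of $\mathcal{B}h$ together with stationary-phase analysis, $\mathcal{B}h(y)$ is essentially supported on $y\asymp T$ (with rapid decay outside) and of size $O(T^{1/2})$ there, so the effective range of summation is $c\asymp m/T$. Applying Weil's bound $|S(m,m;c)|\le \tau(c)(m,c)^{1/2}c^{1/2}$ gives
\[\sum_{c\asymp m/T}\frac{|S(m,m;c)|}{c}|\mathcal{B}h(4\pi m/c)|\ll_\delta (mT)^\delta\,m^{1/2},\]
which is the promised subconvex term. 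Combining the two contributions yields the bound $(Tm)^\delta(T^2+m^{1/2})$. The principal technical obstacle is the uniform analysis of $\mathcal{B}h$: one must track its size and oscillation across the transitional range (small argument versus large argument of the Bessel kernel) so that the Weil square-root saving is captured without losing spurious factors of $T$ or $m$; this is carried out carefully in \cite{blomer2014sato} and in the survey literature on the Kuznetsov formula.
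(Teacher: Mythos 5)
The paper's proof of this proposition is a one-line citation to \cite[Lemma~1]{blomer2014sato} together with the Hoffstein--Lockhart lower bound for $L(1,\mathrm{sym}^2 u_j)$ discussed there; your sketch reconstructs precisely the Kuznetsov-formula argument that underlies that cited lemma, with the diagonal giving $T^2=T^d$ and the off-diagonal (Weil bound plus Bessel-transform localization at $c\asymp m/T$) giving $m^{1/2}$, so it agrees with the paper's proof in substance and simply opens up the reference that the paper invokes.
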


The result follows from \cite[Lemma~1]{blomer2014sato} and the discussion following it for the bound on $L(1,\operatorname{sym}^2u_j)$. It is stronger than the density estimate given in \cref{conj:density conj}. In fact, it leads to the following subconvex bound using the same methods as in \cref{lem:density conjecture equivalence}:
for every $0\le \theta_0 \le 1/2$ and $T\ge 1$ and we have
\begin{equation}\label{subconvex-density-alt-2}
|\{\varphi \in \calF_T \mid \theta_{\varphi,p} \ge \theta_0 \}| \ll_{\delta,p} T^{2 (1-4\theta_0)+\delta},
\end{equation}
for every $\delta>0$.
\cref{subconvex-density-alt-2} is slightly stronger than \cite[Proposition~1]{blomer2014sato}.

\begin{prop}\label{subconvex-density-3}
Let $n=3$. We have 
\begin{align*}
\sum_{\varphi\in\calF_T} \left|\lambda_\varphi(m)\right|^2
\ll_\delta \left(Tm\right)^{\delta}\left(T^{5}+m^{5/4}\right),
\end{align*}
for every $\delta>0$.
\end{prop}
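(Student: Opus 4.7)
The plan is to invoke the $\GL_3$ arithmetic Plancherel (Kuznetsov-type) formula of Buttcane, \cite[Theorem~3.3]{buttcane2020plancherel}, in order to detect the sum spectrally and estimate it via a geometric side. Begin by selecting a non-negative spherical Paley--Wiener test function $h$ on $\a_\C^*$ satisfying $h(\mu_\varphi) \ge 1$ whenever $\nu_\varphi \le T$; for instance take $h = |\tilde{k}_\varepsilon|^2$ with $\varepsilon \asymp 1/T$ from \cref{lem:k_0 conditions}. By positivity,
\[
\sum_{\varphi\in\calF_T}|\lambda_\varphi(m)|^2 \;\le\; \sum_{\varphi\in\calB_{3,\cusp}}h(\mu_\varphi)|\lambda_\varphi(m)|^2,
\]
and this latter sum may be augmented by the non-negative continuous-spectrum Plancherel contributions without harm.

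Apply Buttcane's formula to $\sum_\varphi h(\mu_\varphi)\lambda_\varphi(m)\overline{\lambda_\varphi(m)}$ together with the analogous Eisenstein contributions. The spectral side equals a geometric side consisting of a diagonal term (from the identity element of the Weyl group) and off-diagonal contributions indexed by non-trivial Weyl elements of $\GL_3$ attached to $\GL_3$ Kloosterman sums. The diagonal term, which pairs $m$ with itself through Hecke multiplicativity, contributes $\asymp (Tm)^\delta T^5$: the factor $T^5$ is the Weyl-law volume of the spherical spectral ball coming from the Plancherel density $d(\mu)$ (as in \eqref{eq:estimate of c function}), and the identity pairing produces at most logarithmic growth in $m$.

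The remaining task is to bound the Weyl-element Kloosterman contributions. For each non-trivial Weyl element $w$, the geometric contribution is a sum over moduli of a product of $\GL_3$ Kloosterman sums weighted by the $w$-Bessel transform of $h$. Since $m=p^l$, arithmetic support forces the relevant moduli to be supported on powers of $p$. Using Weil/Stevens-type bounds on $\GL_3$ Kloosterman sums together with the rapid decay of the Bessel transform (inherited from the Paley--Wiener properties of $h$ afforded by \cref{lem:k_0 conditions}), one obtains, after summing over moduli, a bound of the form $\ll (Tm)^\delta m^{5/4}$. The exponent $5/4$ comes out of balancing the pointwise Kloosterman bounds against the volume of the moduli range attached to the long Weyl element; this is the crux of the argument and is where the subconvex exponent, as opposed to the convexity exponent $m^{n-1}=m^2$, is gained.

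Combining the diagonal bound $\ll (Tm)^\delta T^5$ with the off-diagonal bound $\ll (Tm)^\delta m^{5/4}$ yields
\[
\sum_{\varphi\in\calF_T}|\lambda_\varphi(m)|^2 \;\ll_\delta\; (Tm)^\delta\bigl(T^5 + m^{5/4}\bigr),
\]
as required. The main obstacle throughout is the delicate control of the long-Weyl-element Kloosterman contribution, for which we rely essentially on Buttcane's refined Kuznetsov/Plancherel machinery for $\GL_3$.
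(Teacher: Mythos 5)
Your proposal correctly identifies Buttcane's \cite[Theorem~3.3]{buttcane2020plancherel} as the key input, which matches the paper's proof. However, there is a genuine gap: a $\GL(3)$ Kuznetsov/arithmetic Plancherel spectral expansion naturally produces a \emph{harmonic-weighted} cuspidal sum, with each term weighted by an arithmetic factor (essentially $1/L(1,\mathrm{Ad}\,\varphi)$ in the standard normalization). To pass from that weighted sum to the unweighted quantity $\sum_{\varphi\in\calF_T}|\lambda_\varphi(m)|^2$ in the proposition, one must control the adjoint $L$-value at $s=1$, and this is precisely the second ingredient the paper cites — the upper bound on $L(1,\mathrm{Ad}\,\varphi)$ from \cite[Equation~(22)]{blomer2014sato}. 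Your argument never addresses the arithmetic weights at all; without that step the quantity you bound is not the one in the statement, and the two are not trivially comparable (individual $L(1,\mathrm{Ad}\,\varphi)$ can be as large as $(T m)^{\delta}$, but this needs to be invoked, and it is exactly what contributes the $(Tm)^\delta$ loss in the final bound).

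A secondary concern: your treatment of the geometric side is asserted rather than derived. You claim the non-trivial Weyl-element Kloosterman contributions ``balance out'' to give $m^{5/4}$ via Weil/Stevens bounds and Bessel-transform decay, but the exponent $5/4$ (beating the convexity exponent $m^{2}$) is precisely the non-trivial content of Buttcane's theorem, obtained through delicate analysis of the $\GL(3)$ Bessel functions and Kloosterman sums. Since you are already invoking \cite[Theorem~3.3]{buttcane2020plancherel}, you should quote the estimate it actually provides rather than re-derive it heuristically; if you intend to re-derive it from the trace-formula identity, the balancing argument for the long Weyl element needs an explicit computation, not an appeal to plausibility.
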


The result can be obtained from \cite[Theorem~3.3]{buttcane2020plancherel} and upper bound of the adjoint $L$-value as in \cite[Equation~(22)]{blomer2014sato}\footnote{\cref{conj:density conj} for $n=3$ can actually be deduced from \cite{blomer2013applications}.}. This result can be used to prove a considerably stronger statement than \cref{conj:density conj alternative}.


\begin{prop}\label{subconvex-density-alt-3}
Let $n=3$ and $p$ fixed. For every $0\le \theta_0 \le 1$ and $T\ge 1$ we have
\[
|\{\varphi \in \calF_T \mid \theta_{\varphi,p} \ge \theta_0 \}| \ll_{\delta,p} T^{5 (1-8\theta_0/5)+\delta},
\]
for every $\delta>0$.
\end{prop}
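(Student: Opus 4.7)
The plan is to run the same argument as the second half of \cref{lem:density conjecture equivalence} but feeding in the \emph{subconvex} bound \cref{subconvex-density-3} in place of the convexity bound \cref{conj:density conj}. The key observation is that \cref{subconvex-density-3} has a ``cuspidal'' term $T^5$ and a ``residual'' term $m^{5/4}$, and balancing these two terms determines the exponent.

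First I would invoke \cite[Lemma~4]{blomer2019density}: for $k \ge n+1 = 4$ and every $\varphi \in \calB_{3,\cusp}$ one has
\[
\sum_{l=0}^{k} |\lambda_\varphi(p^l)|^2 \gg_p p^{2k\theta_{\varphi,p}}.
\]
This lets us convert a pointwise lower bound on $\theta_{\varphi,p}$ into a lower bound on a partial sum of Hecke-eigenvalue squares. Next I would choose the truncation parameter $k$ so that the two terms in \cref{subconvex-density-3} are of the same size: setting $p^{k(n-1)} = p^{2k} \asymp T^{5 \cdot 4/5} = T^{4}$, i.e.\ $k = \lfloor \tfrac{4}{n-1} \log_p T\rfloor = \lfloor 2 \log_p T \rfloor$, guarantees $T^5 \asymp p^{5k/4}$ at the balancing point, which is precisely where the exponent $8\theta_0 = 2k\theta_0 / \log_p T \cdot \log_p T$ arises.

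With this choice of $k$, for every $\varphi$ in the set $\calE_{T,\theta_0} := \{\varphi \in \calF_T \mid \theta_{\varphi,p}\ge \theta_0\}$ the lemma above gives $\sum_{l\le k}|\lambda_\varphi(p^l)|^2 \gg_p T^{8\theta_0}$. Summing over $\calE_{T,\theta_0}$ and swapping the order of summation,
\[
|\calE_{T,\theta_0}|\, T^{8\theta_0} \ll_p \sum_{l=0}^{k} \sum_{\varphi \in \calF_T} |\lambda_\varphi(p^l)|^2 \ll_{\delta} \sum_{l=0}^{k} (T p^l)^\delta \bigl(T^5 + p^{5l/4}\bigr),
\]
by \cref{subconvex-density-3}. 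Each summand is $\ll_\delta T^{\delta'}(T^5 + T^{5})= T^{5+\delta'}$ by our choice of $k$, and the number of terms is $O(\log T)$, which is absorbed into an arbitrarily small power of $T$. We conclude
\[
|\calE_{T,\theta_0}| \ll_{\delta,p} T^{5 - 8\theta_0 + \delta},
\]
which is the claimed bound. The only real subtlety is the balancing step: one must verify that the chosen $k$ is large enough (at least $n+1 = 4$) for $T \gg 1$, and that summing the secondary $\delta$-losses over $l$ does not spoil the exponent --- both are routine. There is no substantial obstacle beyond those already resolved in \cref{lem:density conjecture equivalence}; the gain from $5/4$ (vs.\ the convexity exponent $n-1 = 2$) on the Hecke side feeds directly through the same machinery to give the improved $8\theta_0$ (vs.\ the convexity $4\theta_0$) in the conclusion.
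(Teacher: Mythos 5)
Your strategy is exactly the paper's: truncate the $l$-sum, invoke \cite[Lemma~4]{blomer2019density} to recover $\theta_{\varphi,p}$, and apply \cref{subconvex-density-3}. However, the balancing step contains an arithmetic error that, if left uncorrected, would actually give only the convexity exponent, not the claimed subconvex one.

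You set the truncation by $p^{k(n-1)} = p^{2k} \asymp T^4$, i.e.\ $p^k \asymp T^2$, $k = \lfloor 2\log_p T\rfloor$. But $p^{k(n-1)}$ is the \emph{convexity} secondary term; the secondary term in \cref{subconvex-density-3} is $m^{5/4} = p^{5k/4}$, and the correct balance is $p^{5k/4} \asymp T^5$, i.e.\ $p^k \asymp T^4$, so $k = \lfloor 4\log_p T\rfloor$ — which is what the paper does (it sums over $l$ with $p^l \le T^4$). Your claim ``guarantees $T^5 \asymp p^{5k/4}$'' is then false: with your $k$ one has $p^{5k/4} \asymp T^{5/2}$. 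More seriously, with $k = 2\log_p T$ Blomer's lemma gives $\sum_{l\le k}|\lambda_\varphi(p^l)|^2 \gg p^{2k\theta_{\varphi,p}} = T^{4\theta_{\varphi,p}}$, not $T^{8\theta_0}$, so your chain of inequalities in fact only delivers $|\calE_{T,\theta_0}| \ll T^{5 - 4\theta_0 + \delta}$, which is weaker than the convexity bound $T^{5-5\theta_0+\delta}$ from \cref{conj:density conj alternative}. Replacing $k = 2\log_p T$ by $k = 4\log_p T$ throughout fixes the computation: then $p^{2k\theta_{\varphi,p}} = T^{8\theta_{\varphi,p}}$, the right-hand side is still $\ll (\log T)\,(T p^k)^\delta (T^5 + T^5) \ll T^{5+\delta'}$, and you obtain the stated $T^{5-8\theta_0+\delta}$. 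Everything else — the swapping of summation, absorbing the $\log T$ and $\delta$-losses, and the use of multiplicativity via Blomer's Lemma 4 — is correct and matches the paper.
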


\begin{proof}
We use the same arguments as in \cref{lem:density conjecture equivalence}. For $T$ large enough we sum the estimate in \cref{subconvex-density-3} with $m=p^l$ and $l$ such that $p^l \le T^4$. Then using \cite[Lemma 4]{blomer2019density} we get
\[
\sum_{\varphi\in\calF_T} T^{8\theta_{\varphi,p}}
\ll_{p,\varepsilon}T^{5+\varepsilon},
\]
and this implies the proposition.
\end{proof}
This substantially improves both \cite[Theorem~1]{blomer2014sato} and \cite[Theorem~2]{blomer2014sato}.

\subsection{Eisenstein Series}
\label{subsec:Eisenstein series}

Let $P$ be a standard parabolic in $G:=\PGL(n)$ 
attached to an ordered partition $n=n_1+\dots+n_r$. Let $M$ be the corresponding Levi subgroup and $N$ be the corresponding unipotent radical; see \cref{subsec:spherical representations}, where we denote a general parabolic by $Q$.

Let $T_M$ be the connected component of the identity of the $\R$-points in a maximal torus in the center of $M$.
Let $M(\bbA)^1$ 
be the kernel of all algebraic characters of $M$ (see \cite[Chapter~3]{arthur2005intro}). It holds that 
$M(\bbA)= M^1(\bbA)\times T_M$.
Following \cite[Chapter~7]{arthur2005intro} we give a brief sketch of the construction of the Eisenstein series on $G(\bbA)$, constructed inductively from the elements of $L^2_\disc(M(\Q) \backslash M(\bbA)^1)$. 

We follow the construction of induced representation as in \cref{subsec:spherical representations}. Given an representation $\pi$ of $M(\bbA)$ occurring in $L^2_\disc(M(\Q)\backslash M(\bbA)^1)$ and $\lambda\in\a_{P,\C}^*$ we consider the representation $\pi_\lambda:=\pi\otimes\chi_\lambda$ of $M(\bbA)$ and extend it to $P(\bbA)$ via the trivial representation on $N(\bbA)$. Then we consider the normalized parabolic induction \[\calI_{P,\pi}(\lambda):=\ind_{P(\bbA)}^{G(\bbA)}\pi_\lambda.\]
The representation will be unitary when $\lambda \in i\a_{P}^*$. 

When $\pi$ is the right regular representation then we denote $\calI_{P,\pi}$ as $\calI_{P}$. One can realize the $\calI_{P}(\lambda)$ on the Hilbert space $\calH_{P}$ (which is $\lambda$-independent) defined by the space of functions
\[\varphi\colon N(\bbA)M(\Q)T_M\backslash G(\bbA)\to \C\]
such that for every $x \in G(\bbA)$ the function
$\varphi_x\colon m \to \varphi(mx)$
belongs to $L^2_{\disc}(M(\Q)\backslash M(\bbA)^1)$, and
\[\|\varphi\|^2:=\int_{M(\Q)\backslash M(\bbA)^1}\int_{K_\bbA}|\varphi(mk)|^2\d k\d m <\infty.\]

Let $\calH_{P}^0\subset \calH_P$ be the subset of $K_\bbA$-finite vectors. For each element $\varphi\in\calH_{P}^0$ and $\lambda\in\a^*_{P,\C}$ we define an Eisenstein series as a function on $G(\Q)\backslash G(\bbA)$ by
\[\Eis_P(\varphi,\lambda)(x):=\sum_{\gamma\in P(\Q)\backslash G(\Q)}\varphi(\gamma x)\chi_{\rho_P+\lambda}(a(\gamma x)),\quad x\in G(\bbA).\]
The above sum converges absolutely for sufficiently dominant $\lambda$ and can be meromorphically continued for all $\lambda\in\a^*_{P,\C}$ by the work of Langlands. Moreover, $\calI_{P,\pi}(\lambda)$ intertwines with $\Eis_P$, in the sense that,
\[\Eis_P(\calI_{P}(\lambda)(g)\varphi,\lambda)(x)=\Eis_P(\varphi,\lambda)(xg),\quad x\in\X_\bbA, g\in G(\bbA).\]

In particular, if $\Eis_P(\varphi,\lambda)$ is $K_\bbA$-invariant then $\varphi\in\calH_{P}^{K_\bbA}$. Consequently, $\varphi$ can be realized as an element of $L^2_{\disc}(M(\Q)\backslash M(\bbA)^1)^{K_\bbA\cap M(\bbA)}$.

We have the decomposition 
\[\calH_{P}^{K_\bbA}=\bigoplus_{\pi}\calH_{P,\pi}^{K_\bbA},\]
where $\pi$ runs over the isomorphism classes of irreducible sub-representations of $M(\bbA)$ occurring in the subspace $L^2_\disc(M(\Q)\backslash M(\bbA)^1)$. 
By the Iwasawa decomposition $G(\bbA) = M(\bbA)N(\bbA)K_{\bbA}$, we have an isomorphism of vector spaces $\calH_P^{K_\bbA} \cong L^2_{\disc}(M(\Q)\backslash M(\bbA)^1)^{K_{M,\bbA}}\ne \{0\}$, where $K_{M,\bbA} = K_{\bbA}\cap M(\bbA)$. Similarly, we have $\calH_{P,\pi}
^{K_\bbA} \cong \pi^{K_M,\bbA}$, which is one-dimensional by the multiplicity one theorem. In this case we choose $\varphi_\pi \in \calH_{P,\pi}^{K_\bbA}$ to be a vector of norm $1$. We let $\calB_{P}$ be the set of such vectors $\varphi$, when we go over all the possible representation $\pi$ of $M(\bbA)$ appearing in $L^2_\disc(M(\Q) \backslash M(\bbA)^1)$ with $\pi^{K_{M,\bbA}}\ne 0$.

Let us describe the last set more explicitly. 
Each irreducible representation of $M(\bbA)$ appearing in the decomposition of $L^2_\disc(M(\Q)\backslash G(\bbA))$ has a central character $\chi$ of the center $Z(M(\bbA))$, trivial on $Z(M(\bbA)^1)\cap M(\Q)$. If the representation has a $K_{M,\bbA}$-invariant vector then $\chi$ must also be trivial on $Z(M(\bbA)^1)\cap K_{M,\bbA}$. 
Every central character of $\GL_{n_i}(\bbA)$ which is trivial on $\GL_{n_i}(\Q)$ and the maximal open compact subgroup $O_{n_i}(\R)\times \prod_p \GL_{n_i}(\Z_p)$ is trivial. 
Since $M$ is essentially a product of $\GL_{n_i}$, 
we deduce that $\chi$ is trivial. Therefore, $\calB_P$ is in bijection with irreducible subrepresentations of
\[
L^2_{\disc}((\PGL_{n_1}(\Q)\times\dots \times \PGL_{n_r}(\Q)) \backslash (\PGL_{n_1}(\bbA)\times\dots \times \PGL_{n_r}(\bbA))),
\]
having a $K_{n_1,\bbA}\times\dots \times K_{n_r,\bbA}$-invariant vector, where $K_{n_i,\bbA}$ is the maximal compact subgroup in $\PGL_{n_i}(\bbA)$.

The last space decomposes into a linear span of $\varphi_1\otimes \dots \otimes \varphi_r$, $\varphi_i \in \calB_{n_i}$.
We conclude that $\calB_P$ is in bijection with $\calB_{n_1}\times\dots \times \calB_{n_r}$.

If $\mu_{\varphi_i,v}$ is the Langlands parameter of $\varphi_i$ as the place $v$ then we embed it in $\a_{\C}^*$ in $(n_1+\dots +n_{i-1}+1,\dots ,n_1+\dots +n_i)$-th coordinates. Consequently, the Langlands parameter of $\Eis_P(\varphi,\lambda)$ at the place $v$ is 
\[\mu_{\varphi,\lambda, v} := (\mu_{\varphi_i,v},\dots, \mu_{\varphi_r,v})+\lambda \in \a_\C^*,\] which follows from \cref{langlands-paramteres-induction}.
In particular, for $\lambda \in i\a_{P}^*$ we have\footnote{Notice that $\theta_{\varphi,\lambda,p}$ does not depend on $\lambda$, so we discard it from the notations.}
\begin{equation}\label{eq:theta of Eis}
\theta_{\varphi,v}=\theta_{\varphi,\lambda,v}:=\theta(\mu_{\varphi,\lambda,p}) = \max_i{\theta_{\varphi_i,p}},   
\end{equation}
where $\theta_{\varphi,v}$ is as defined in \cref{defn-theta}.
We also have
\begin{equation}\label{eq:nu of Eis}
\nu_{\varphi,\lambda}^2:=\|\mu_{\varphi,\lambda,\infty}\|^2 = \nu_{\varphi_1}^2+\dots +\nu_{\varphi_r}^2+\|\lambda\|^2,
\end{equation}
where $\nu_\varphi$ is as defined in \cref{defn-nu}.
We also abbreviate (and slightly abuse notations) $\nu_{\varphi} = \nu_{\varphi,0}$.

If $h_v\in C_c^\infty(K_v \backslash \PGL_n(\Q_v) / K_v)$, then
\[R(h_v)\Eis_P(\varphi,\lambda) = \Eis_P(\calI_{P,\pi}(\lambda)(h_v)\varphi,\lambda)=\tilde{h}(\mu_{\varphi,\lambda,v})\Eis_P(\varphi,\lambda),\]
where $R(h_v)$ is defined as in \cref{subsec:adelic formulation}. 

In particular, if $f\in C_c^\infty(K_\bbA\backslash \PGL_n(\bbA) / K_\bbA)$ is of the form
\begin{equation}\label{eq:form of f}
    f(g) = f((g_v)_v) = f_\infty(g_\infty)f_p(g_p)\prod_{q\ne p} \One_{K_q}(g_q),
\end{equation}
then we have
\begin{equation}\label{eq:action on Eis}
    R(f)\Eis_P(\varphi,\lambda) =\Eis_P(\calI_{P,\pi}(h)\varphi,\lambda) \\
    = \tilde{f}_\infty(\mu_{\varphi,\lambda,\infty})
    \tilde{f}_p(\mu_{\varphi,\lambda,p})\Eis_P(\varphi,\lambda).
\end{equation}

\subsection{Spectral Decomposition}

We can describe Langlands spectral decomposition following \cite[Section~7]{arthur2005intro}.

We denote by $C_c^\infty(K_{\bbA}\backslash \PGL_n(\bbA) / K_{\bbA})$ the space that is spanned by functions of the form $f = \prod_{v\le \infty} f_v$, with $f_v \in C_c^\infty(K_v \backslash \PGL_n(\Q_v)/ K_v)$, and $f_v = \One_{K_v}$ for almost every $v$. 
Given $f\in C_c^\infty(K_{\bbA}\backslash \PGL_n(\bbA) / K_{\bbA})$, we will consider the operator 
\[R(f)\colon L^2(\X_\bbA)\to L^2(\X_\bbA),\]
defined by 
\begin{equation*}
    R(f)\varphi(x) 
    := \intop_{\PGL_n(\bbA)} f(y)\varphi(xy)  \d y 
    =  \intop_{\X_\bbA }K_f(x,y)\varphi(y)\d y,
\end{equation*} 
where 
\begin{equation}\label{defn-aut-kernel}
K_f(x,y) := \sum_{\gamma \in \PGL_n(\Q)} f(x^{-1}\gamma y).
\end{equation}
Note that the compact support of $f$ ensures that the above sum is finite.


Finally, we record the spectral decomposition of the automorphic kernel $K_f$.
\begin{equation}\label{eq: Spectral decomposition}
K_f(x,y) = \sum_P C_P \sum_{\varphi \in \calB_P} \intop_{i\a_P^*}  \Eis_P(\calI_{P,\pi_\varphi}(\lambda)(f)\varphi,\lambda)(x)\overline{\Eis_P(\varphi,\lambda)(y)} \d \lambda,
\end{equation}

Notice that when $P=G$ then $\calB_G=\calB_n$, there is no integral over $\lambda$, and $\Eis_P(\varphi,\lambda)$ is simply $\varphi$. The constants $C_P$ are certain explicit constants with $C_G=1$, and are slightly different than in \cite{arthur2005intro}, since we normalize the measure of $\a_\C$ differently.

We will also need the $L^2$-spectral expansion in the following form.
\begin{prop}\label{prop: spectral decomposition 2}
Let $f\in C_c^\infty(K_\bbA\backslash \PGL_n(\bbA) / K_\bbA)$ be as in \cref{eq:form of f}. 
For $x_0\in \X_{\bbA}$, let $F_{x_0} \in C_c^\infty(X_\bbA)$ be
\[
F_{x_0}(x) := K_f(x_0,x)
\]
Then 
\[
\|F_{x_0}\|_2^2 = \sum_P C_P \sum_{\varphi \in \calB_P}\intop_{i\a_P^*} |\tilde{f}_\infty(\mu_{\varphi,\lambda,\infty})|^2|\tilde{f}_p(\mu_{\varphi,\lambda,p})|^2 |\Eis_P(\varphi,\lambda)(x_0)|^2 \d \lambda,
\]
where the notations are as in \cref{eq: Spectral decomposition}.
\end{prop}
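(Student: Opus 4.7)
The plan is to combine the spectral decomposition of $K_f$ given in \cref{eq: Spectral decomposition} with the Hecke action formula \cref{eq:action on Eis}, and then apply Parseval's identity for the Langlands Plancherel decomposition of $L^2(\X_\bbA)$.

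First, because $f$ has the factorized form \cref{eq:form of f}, formula \cref{eq:action on Eis} implies
\[\Eis_P(\calI_{P,\pi_\varphi}(\lambda)(f)\varphi,\lambda) = \tilde{f}_\infty(\mu_{\varphi,\lambda,\infty})\tilde{f}_p(\mu_{\varphi,\lambda,p})\Eis_P(\varphi,\lambda).\]
Plugging this into \cref{eq: Spectral decomposition} and setting the first argument to $x_0$, we obtain the expansion
\[F_{x_0}(y) = \sum_P C_P\sum_{\varphi\in\calB_P}\intop_{i\a_P^*}\tilde{f}_\infty(\mu_{\varphi,\lambda,\infty})\tilde{f}_p(\mu_{\varphi,\lambda,p})\Eis_P(\varphi,\lambda)(x_0)\overline{\Eis_P(\varphi,\lambda)(y)}\d\lambda.\]
Note that $F_{x_0}\in L^2(\X_\bbA)$: since $f$ has compact support, the sum in \cref{defn-aut-kernel} defining $K_f(x_0,\cdot)$ is locally finite and is supported on the image of $x_0\cdot\supp(f)$ in $\X_\bbA$, which is compact. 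Thus $F_{x_0}$ is a bounded, compactly supported function.

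The previous display is precisely the Langlands spectral expansion of $F_{x_0}$, with spectral coefficient (against $\overline{\Eis_P(\varphi,\lambda)}$) equal to $\tilde{f}_\infty(\mu_{\varphi,\lambda,\infty})\tilde{f}_p(\mu_{\varphi,\lambda,p})\Eis_P(\varphi,\lambda)(x_0)$. Applying Parseval's identity for the Plancherel isometry of $L^2(\X_\bbA)$ (cf.~\cite[Section~7]{arthur2005intro}) yields
\[\|F_{x_0}\|_2^2 = \sum_P C_P\sum_{\varphi\in\calB_P}\intop_{i\a_P^*}|\tilde{f}_\infty(\mu_{\varphi,\lambda,\infty})|^2|\tilde{f}_p(\mu_{\varphi,\lambda,p})|^2|\Eis_P(\varphi,\lambda)(x_0)|^2\d\lambda,\]
which is exactly the claimed formula. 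The reindexing $(\varphi,\lambda)\mapsto(\overline{\varphi},-\lambda)$ is a measure-preserving involution on $\calB_P\times i\a_P^*$ interchanging $\Eis_P(\varphi,\lambda)$ with its complex conjugate, so the distinction between expanding against $\Eis_P(\varphi,\lambda)$ versus $\overline{\Eis_P(\varphi,\lambda)}$ is invisible under $|\cdot|^2$.

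The main technical point is the justification of Parseval for the generalized spectral decomposition, namely the completeness and ``$\delta$-orthogonality'' of the Eisenstein series at different parameters; this is a standard consequence of Langlands' theory. As an alternative route that avoids invoking Parseval as a black box, one may compute $\|F_{x_0}\|_2^2 = \intop_{\X_\bbA}K_f(x_0,y)\overline{K_f(x_0,y)}\d y$ directly, insert the spectral expansion of $K_f$ for both factors, and collapse the resulting double integral via the orthogonality relations for $\Eis_P(\varphi,\lambda)$ interpreted as delta measures in the spectral parameter $\lambda$.
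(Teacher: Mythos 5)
Your proof is correct, but it takes a genuinely different route from the paper's. You apply the Hecke action to the kernel expansion of $K_f$, identify the resulting spectral coefficients of $F_{x_0}$, and invoke the Plancherel/Parseval isometry for $L^2(\X_\bbA)$ to pass from the coefficients to the $L^2$-norm. The paper instead sidesteps Parseval entirely: it sets $f_1 := f * f^*$ with $f^*(g) = \overline{f(g^{-1})}$, and shows by a direct unfolding computation (exchanging the $\PGL_n(\Q)$-sum with the $\X_\bbA$-integral to land on a $\PGL_n(\bbA)$-integral) that $\|F_{x_0}\|_2^2 = K_{f_1}(x_0,x_0)$. One then applies the kernel decomposition \cref{eq: Spectral decomposition} a single time to $K_{f_1}$ and evaluates on the diagonal, using the fact that $\tilde{(f_1)}(\mu) = |\tilde{f}(\mu)|^2$ (since $\tilde{f^*} = \overline{\tilde{f}}$) to obtain the stated identity. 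The paper's convolution-square trick is more economical: it needs only the kernel decomposition as already stated, whereas your route requires the orthogonality relations/Plancherel theorem for Eisenstein series as a separate input (which you acknowledge invoking ``as a black box''). Your alternative route in the last paragraph — expanding both copies of $K_f$ and collapsing via $\delta$-orthogonality — is still heavier than the paper's, which uses unfolding rather than orthogonality to collapse the double integral. Both approaches are sound; the paper's is simply more self-contained relative to the ingredients it has set up.
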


\begin{proof} 
Denote $f^*(g) := \overline{f(g^{-1})}$, and $f_1 := f*f^*$. 
We notice that for notations as in \cref{eq: Spectral decomposition}, using \cref{eq:action on Eis}
\begin{equation}\label{eq:sp pf 1}
\Eis_P(\calI_{P,\pi_\varphi}(f_1)\varphi,\lambda) = \Eis_P(\calI_{P,\pi_\varphi}(f)\calI_{P,\pi_\varphi}(f^*)\varphi,\lambda) = |\tilde{f}_\infty(\mu_{\varphi,\lambda,\infty})|^2|\tilde{f}_p(\mu_{\varphi,\lambda,p})|^2\Eis_P(\varphi,\lambda).    
\end{equation}
We next claim that 
\begin{equation}\label{eq:sp pf 2}
\|F_{x_0}\|_2^2 = K_{f_1}(x_0,x_0).
\end{equation}
Then the proof follows from \cref{eq: Spectral decomposition}, \cref{eq:sp pf 1} and \cref{eq:sp pf 2}.

To see \cref{eq:sp pf 2} we note that,
\begin{align*}
\|F_{x_0}\|_2^2 = \intop_{\X_\bbA} F_{x_0}(x)\overline{F_{x_0}(x)}\d x= \intop_{\X_\bbA}  \sum_{\gamma \in \PGL_n(\Q)} f(x_0^{-1}\gamma x) \sum_{\gamma_1 \in \PGL_n(\Q)}\overline{f(x_0^{-1}\gamma_1 x)}\d x
\end{align*}
Both of the above sums are finite. Exchanging order summation and integration, and unfolding the $\X_\bbA$ integral we obtain the above equals
\[ \intop_{\PGL_n(\bbA)} \sum_{\gamma \in \PGL_n(\Q)} f(x_0^{-1} \gamma g) f^{*}(g^{-1} x_0)\d g.\]
Once again exchanging finite sum with a compact integral we obtain
\[\sum_{\gamma \in \PGL_n(\Q)} (f*f^*)(x_0^{-1} \gamma x_0),\]
which concludes the proof.
\end{proof}

We will now use \cref{prop: spectral decomposition 2} to prove a version of a \emph{local weak Weyl Law}, which will be needed in our proof.

\begin{prop}\label{prop:Weyl Law local}
Let $\Omega\subset \X_A$ be a compact subset. Then for every $x_0 \in \Omega$ it holds that
\[
\sum_P C_P \sum_{\varphi\in \calB_P, \nu_\varphi \le T} \intop_{\lambda \in i\a_P^*, |\lambda| \le T} |\Eis_P(\varphi,\lambda)(x_0)|^2 \d \lambda \ll_\Omega T^{d},\]
as $T$ tends to infinity.
\end{prop}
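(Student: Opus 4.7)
The plan is to apply \cref{prop: spectral decomposition 2} with a test function whose spherical transform is bounded below on the desired spectral region, and then to bound the resulting $L^2$-norm of the automorphic kernel geometrically.

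First, I would set $\varepsilon = c/T$ for a sufficiently small absolute constant $c>0$, to be pinned down momentarily, and take $f \in C_c^\infty(K_\bbA \backslash \PGL_n(\bbA)/K_\bbA)$ of the form \cref{eq:form of f} with $f_\infty = k_\varepsilon$ (the function from \cref{lem:k_0 conditions}) and $f_q = \One_{K_q}$ for every finite place $q$ (including $q=p$). Because each $\eta_\mu$ is bi-$K_q$-invariant with $\eta_\mu(e) = 1$, a direct computation gives $\tilde{f}_q(\mu) = m(K_q)\eta_\mu(e) = 1$ for every finite $q$, so \cref{prop: spectral decomposition 2} reduces to
\[
\|F_{x_0}\|_2^2 \;=\; \sum_P C_P \sum_{\varphi\in\calB_P} \intop_{i\a_P^*} |\tilde{k}_\varepsilon(\mu_{\varphi,\lambda,\infty})|^2\,|\Eis_P(\varphi,\lambda)(x_0)|^2 \d\lambda.
\]

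Next I would establish the spectral lower bound. By \cref{eq:nu of Eis}, whenever $\nu_\varphi \le T$ and $\|\lambda\| \le T$ one has $\|\mu_{\varphi,\lambda,\infty}\|^2 \le \nu_\varphi^2 + \|\lambda\|^2 \le 2T^2$. Choosing $c \le C/\sqrt{2}$, with $C$ the constant from property $(5)$ of \cref{lem:k_0 conditions}, guarantees $\|\mu_{\varphi,\lambda,\infty}\| \le C\varepsilon^{-1}$ for all such $(\varphi,\lambda)$, so $|\tilde{k}_\varepsilon(\mu_{\varphi,\lambda,\infty})|\gg 1$. Since every summand on the spectral side is non-negative, restricting to this range yields
\[
\|F_{x_0}\|_2^2 \;\gg\; \sum_P C_P \sum_{\substack{\varphi\in\calB_P\\\nu_\varphi\le T}} \intop_{\|\lambda\|\le T} |\Eis_P(\varphi,\lambda)(x_0)|^2 \d\lambda,
\]
which is exactly (a lower bound for) the quantity we wish to control.

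Finally, I would bound $\|F_{x_0}\|_2^2 \ll_\Omega T^d$ geometrically. Arguing as in the proof of \cref{prop: spectral decomposition 2}, $\|F_{x_0}\|_2^2 = K_{f*f^*}(x_0,x_0) = \sum_{\gamma\in\PGL_n(\Q)} (f*f^*)(x_0^{-1}\gamma x_0)$. The convolution $f*f^*$ is supported in $B_{2\varepsilon} \times \prod_{q<\infty} K_q$ and, by Young's inequality and property $(3)$ of \cref{lem:k_0 conditions},
\[
\|f*f^*\|_\infty \;\le\; \|f\|_2^2 \;=\; \|k_\varepsilon\|_2^2 \;\ll\; \varepsilon^{-d}.
\]
It remains to count the $\gamma\in\PGL_n(\Q)$ with $x_0^{-1}\gamma x_0$ in this support. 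Since $\PGL_n(\Q)$ is discrete in $\PGL_n(\bbA)$ and $\Omega$ is compact, lifting $x_0\in\Omega$ to a compact set in $\PGL_n(\bbA)$ shows that $x_0 \cdot (\mathrm{supp}(f*f^*))\cdot x_0^{-1}$ sits in a fixed compact subset of $\PGL_n(\bbA)$, whence the number of such $\gamma$ is $O_\Omega(1)$, uniformly for all $\varepsilon$ smaller than some $\varepsilon_0(\Omega)$. Combining, $\|F_{x_0}\|_2^2 \ll_\Omega \varepsilon^{-d} \asymp T^d$, finishing the proof.

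The main (and mild) obstacle is the uniformity of the lattice-point count in the last step across $x_0\in\Omega$; this is a standard consequence of discreteness of $\PGL_n(\Q)$ in $\PGL_n(\bbA)$ together with compactness of $\Omega$, but has to be articulated carefully since the stabilizers of individual points $x_0$ may differ.
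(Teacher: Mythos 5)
Your proof is correct and follows essentially the same route as the paper's: take $f_\infty = k_\varepsilon$ with $\varepsilon \asymp T^{-1}$, $f_q = \One_{K_q}$ at finite places, bound the spectral side from below via property~$(5)$ of \cref{lem:k_0 conditions}, and bound $\|F_{x_0}\|_2^2 = K_{f*f^*}(x_0,x_0)$ geometrically by an $O_\Omega(1)$ lattice count times $\|f*f^*\|_\infty \le \|k_\varepsilon\|_2^2 \ll \varepsilon^{-d}$. The only cosmetic difference is that you invoke Young's inequality where the paper invokes Cauchy--Schwarz for the pointwise bound on the self-convolution; these are the same estimate.
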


\begin{proof}
Let $\varepsilon \asymp T^{-1}$ with a sufficiently small implied constant. Choose $f_\infty =k_\varepsilon$ where $k_\varepsilon$ is of the from described in \cref{lem:k_0 conditions}, and $f_p = \One_{K_p}$.
Construct $f_1$ and $F_{x_0}$ as in (the proof of) \cref{prop: spectral decomposition 2}. Note that $f_{1,p}$ is again $\One_{K_p}$ and $f_{1,\infty}$ is supported on $K_\infty B_{2\varepsilon} K_\infty$, which follows from property $(1)$ of \cref{lem:k_0 conditions}.
Then we claim that for $x_0 \in \Omega$
\[
\| F_{x_0}\|^2_2\,\, \ll_\Omega \|f_\infty\|_2^2\,\, \ll T^{d}.
\]
Notice that the second estimate follows from property $3$ in \cref{lem:k_0 conditions}.

First, we choose some fixed liftings of $x_0,x\in \PGL_n(\bbA)$, whose $p$-coordinates $x_{0,p},x_p$ are in $K_p$, and their $\infty$-coordinates are in a fixed fundamental domain of $\PGL_n(\Z) \backslash \PGL_n(\R)$. This is possible by \cref{lem: local-global space equivalence}.

If $f_1(x_0^{-1} \gamma x) \ne0$ it implies that $\One_{K_p}(x_{0,p}^{-1} \gamma x_p)\ne 0$ for all $p$. Hence, $\gamma \in K_p$ for all $p$, which implies that $\gamma\in\PGL_n(\Z)$. In addition,  $f_{1,\infty}(x_{0,\infty}^{-1}\gamma x_\infty)\ne 0 $, which implies that $x_{0,\infty}^{-1}\gamma x_\infty \in K_\infty B_{2\varepsilon} K_\infty$. 
Clearly, the number of $\gamma \in \PGL_n(\Z)$ with $x_{0,\infty}^{-1}\gamma x_\infty \in B_{2\varepsilon}$ is $\ll_\Omega 1$.

Using the proof of \cref{prop: spectral decomposition 2} we obtain
\[\| F_{x_0}\|^2_2\,\, \ll_\Omega \|f_{1,\infty}\|_{\infty}.\]
Applying Cauchy--Schwarz we see that the above is bounded by $\|f_\infty\|_2^2$, as needed.

First notice that for $\nu_\varphi \le T$ and $\|\lambda \|\le T$ it holds that 
\[\|\mu_{\varphi,\lambda,\infty}\| =\nu_{\varphi,\lambda} \ll T \asymp \varepsilon^{-1},\]
with a sufficiently small implied constant.
Thus using property $(5)$ of \cref{lem:k_0 conditions} we get that
\[|\tilde{f}_\infty(\mu_{\varphi,\lambda,\infty})| \gg 1.\]
Therefore we have
\begin{align*}
    &\sum_P C_P \sum_{\varphi\in \calB_P, \nu_\varphi \le T} \intop_{\lambda \in i\a_P^*, |\lambda| \le T} |\Eis_P(\varphi,\lambda)(x_0)|^2 \d \lambda \\
    \ll& \sum_P C_P \sum_{\varphi\in \calB_P} \intop_{\lambda \in i\a_P^*} |\tilde{f}_\infty(\mu_{\varphi,\lambda,\infty})|^2|\Eis_P(\varphi,\lambda)(x_0)|^2 \d \lambda.
\end{align*}
Applying \cref{prop: spectral decomposition 2} we conclude.
\end{proof}

\subsection{The Residual Spectrum and Shapes}
\label{subsec:residual sepctrum}
M{\oe}glin and Waldspurger in \cite{moeglin1989residual} described the residual spectrum $\calB_{n,\res}$ as follows. Let $ab=n$ with $b>1$ and let $\varphi \in \calB_{a,\cusp}$. 
Consider the standard parabolic subgroup $P$ corresponding to the ordered partition $(a,\dots ,a)$ of $n$, and let $\varphi'\in \calB_P$ correspond to $(\varphi,\dots ,\varphi)\in \calB_a\times\dots \times \calB_a$. Construct the Eisenstein series $\Eis_P(\varphi',\lambda)$ for $\lambda\in \a_{P,\C}^*$. This is a meromorphic function, and it has a (multiple) residue at the point 
\[\lambda = \rho_b = ((b-1)/2,(b-3)/2,\dots ,-(b-1)/2)\in \a_{P,\C}^*,\]
where we temporarily identified $\a_{P,\C}^*$ with a subset of $\C^b$ and recall that it embeds in $\a_\C^*$ by repeating each value $a$ times. The residue can be calculated as 
\[
\psi' = \lim_{\lambda\to \rho_b} (\lambda_1-\lambda_2-1)\cdot\dots \cdot(\lambda_{b-1}-\lambda_{b}+1)\Eis_P(\varphi',\lambda).
\]
After normalization, $\psi=\psi'/\|\psi'\|$ is an element of $\calB_{n,\res}$, and every element of $\calB_{n,\res}$ can be constructed this way for some $ab=n$, $b>1$, $\varphi\in \calB_a$. Thus we deduce that 
\begin{equation*}
\calB_{n}=\bigsqcup_{a\mid n}\calB_{a,\cusp}.
\end{equation*}

From the above description it follows that on the level of Langlands parameters we have
\[\mu_{\psi,v}= \mu_{\varphi',\rho_b,v} = 
(\mu_{\varphi,v},\dots ,\mu_{\varphi,v})+\rho_b.
\]
In particular, we have 
\[
\theta_{\psi,p}= \theta_{\varphi,p}+(b-1)/2.
\]
and if $\nu_\varphi\gg 1$ then $\nu_{\psi}\asymp \nu_\varphi$.

Each $\varphi \in \calB_P$ is parameterized by a \emph{shape} $(a_1,b_1),\dots,(a_r,b_r)$ where $a_i,b_i\ge1$ with $\sum_{i=1}^r a_ib_i=n$, and $P$ corresponds to the ordered  
partition $n=\sum_{i=1}^r a_ib_i$. Moreover, if $\varphi$ corresponds to $(\varphi_1,\dots ,\varphi_r)$, $\varphi_i\in \calB_{n_i}$, then $\varphi_i$ corresponds to a cuspidal representation in $\calB_{a_i,\cusp}$.

Given a shape $S=((a_1,b_1),\dots ,(a_r,b_r))$, we let $\calB_S \subset \calB_P$ be the set of forms $\varphi \in \calB_P$ of shape $S$. We have the following estimates.

\begin{lemma}\label{lem:residual theta bound}
For every $\varphi \in \calB_S$, it holds that
\[
\theta_{\varphi,p}\le \max_i\{(b_i-1)/2+\theta_{a_i,p}\},
\]
where $\theta_{a_i,p}$ is the best known bound towards the GRC at the place $p$ for the cuspidal spectrum of $\PGL(a_i)$.
\end{lemma}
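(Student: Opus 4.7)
The plan is straightforward: this bound is a direct combination of two ingredients already established earlier in this section, namely the M\oe{}glin--Waldspurger description of the residual spectrum (and its effect on $\theta$) recorded in \cref{subsec:residual sepctrum}, and the computation of $\theta$ for parabolically induced representations given in \cref{eq:theta of Eis}.

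First I would unpack the data carried by $\varphi\in\calB_S$. Writing $n_i:=a_ib_i$, the shape datum means that $\varphi$ corresponds to an $r$-tuple $(\varphi_1,\dots,\varphi_r)$ with $\varphi_i\in\calB_{n_i}$, and each $\varphi_i$ is, by the definition of the shape, the (normalized) residual form built from some cuspidal $\tilde\varphi_i\in\calB_{a_i,\cusp}$ via the M\oe{}glin--Waldspurger construction with multiplicity parameter $b_i$; when $b_i=1$ this is vacuous and $\varphi_i=\tilde\varphi_i$.

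Next I would invoke the explicit Langlands-parameter formula recorded immediately above the statement, namely $\mu_{\varphi_i,p}=(\mu_{\tilde\varphi_i,p},\dots,\mu_{\tilde\varphi_i,p})+\rho_{b_i}$, which upon taking the $\theta$-invariant yields
\[
\theta_{\varphi_i,p}=\theta_{\tilde\varphi_i,p}+\frac{b_i-1}{2}.
\]
Bounding the cuspidal contribution by the best known exponent towards GRC on $\GL(a_i)$ gives $\theta_{\tilde\varphi_i,p}\le\theta_{a_i,p}$. Finally, since $\varphi\in\calB_P$ is obtained from $(\varphi_1,\dots,\varphi_r)$ by normalized parabolic induction along $P$, \cref{eq:theta of Eis} gives $\theta_{\varphi,p}=\max_i\theta_{\varphi_i,p}$, and combining with the previous display yields the claimed inequality.

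There is no substantive analytic obstacle here; the argument is essentially bookkeeping. The only point requiring a little care is the two-layered nature of the induction hidden in the shape data: an inner M\oe{}glin--Waldspurger residue on each $\GL(n_i)$-Levi factor (contributing $(b_i-1)/2$) followed by an outer parabolic induction along $P$ (responsible for the maximum over $i$), together with the observation that the formula continues to hold uniformly in the degenerate case $b_i=1$ where no residue is taken.
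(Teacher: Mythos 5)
Your proof is correct and takes essentially the same approach as the paper: the paper's one-line justification (``the lemma follows from the claim above about the behavior of $\theta$ of residual forms and \cref{eq:theta of Eis}'') is precisely the two-step bookkeeping you carry out explicitly, first applying the M\oe{}glin--Waldspurger shift $\theta_{\varphi_i,p}=\theta_{\tilde\varphi_i,p}+(b_i-1)/2$ on each factor and then taking the maximum over $i$ via \cref{eq:theta of Eis}.
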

Indeed, the lemma follows from the claim above about the behavior of $\theta$ of residual forms and \cref{eq:theta of Eis}.

The following estimate combines Weyl's law for a given shape.
\begin{lemma}\label{lem:Weyl law residual}
We have
\[
|\{\varphi \in \calB_S \mid \nu_\varphi \le T\}| \ll T^{d_S},
\]
where $d_S = \sum_{i=1}^r (a_i+2)(a_i-1)/2$.
\end{lemma}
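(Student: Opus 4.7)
The plan is to reduce the count over $\calB_S$ to a product of counts over cuspidal spectra of smaller general linear groups, and then invoke the weak Weyl law (\cref{prop:Weyl law upper bound}) in each factor.

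First, I would unpack the parametrization of $\calB_S$. By the description of $\calB_P$ for the standard parabolic $P$ attached to the partition $(n_1, \dots, n_r)$ with $n_i = a_i b_i$, the set $\calB_P$ is in bijection with $\calB_{n_1} \times \dots \times \calB_{n_r}$, where $\varphi$ corresponds to a tuple $(\varphi_1, \dots, \varphi_r)$. Restricting to $\calB_S$ forces each $\varphi_i$ to lie in $\calB_{n_i}$ with shape component $(a_i, b_i)$, i.e., in the M\oe glin--Waldspurger decomposition $\calB_{n_i} = \bigsqcup_{a | n_i} \calB_{a,\cusp}$, the form $\varphi_i$ is the residual form built from a cuspidal $\tilde\varphi_i \in \calB_{a_i, \cusp}$ via the $(a_i, b_i)$ construction. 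So $\calB_S$ is in bijection with $\calB_{a_1, \cusp} \times \dots \times \calB_{a_r, \cusp}$.

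Next, I would translate the archimedean size condition $\nu_\varphi \le T$ into conditions on the cuspidal data $\tilde\varphi_i$. By \cref{eq:nu of Eis} we have $\nu_\varphi^2 = \sum_{i=1}^r \nu_{\varphi_i}^2$. From the explicit M\oe glin--Waldspurger formula $\mu_{\psi,\infty} = (\mu_{\tilde\varphi,\infty}, \dots, \mu_{\tilde\varphi,\infty}) + \rho_b$ recalled in \cref{subsec:residual sepctrum}, applied componentwise, we obtain $\nu_{\varphi_i} \asymp \sqrt{b_i}\, \nu_{\tilde\varphi_i} + O(1)$. In particular, $\nu_\varphi \le T$ forces $\nu_{\tilde\varphi_i} \ll T/\sqrt{b_i} \ll T$ for every $i$.

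Finally I would apply \cref{prop:Weyl law upper bound} to the cuspidal spectrum of $\PGL(a_i)$, whose symmetric space has dimension $d_{a_i} = (a_i+2)(a_i-1)/2$. That gives
\[
|\{\tilde\varphi_i \in \calB_{a_i, \cusp} : \nu_{\tilde\varphi_i} \ll T\}| \ll T^{(a_i+2)(a_i-1)/2}.
\]
Multiplying across $i = 1, \dots, r$ yields the bound
\[
|\{\varphi \in \calB_S \mid \nu_\varphi \le T\}| \ll \prod_{i=1}^r T^{(a_i+2)(a_i-1)/2} = T^{d_S},
\]
as required. There is no real obstacle here; the only things to verify carefully are the parametrization of $\calB_S$ through cuspidal building blocks and the scaling $\nu_{\varphi_i} \asymp \sqrt{b_i}\, \nu_{\tilde\varphi_i}$ for residual forms, both of which are immediate from the material already recalled in \cref{subsec:Eisenstein series} and \cref{subsec:residual sepctrum}.
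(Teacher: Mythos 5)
Your proof is correct and takes essentially the same route as the paper: you use the bijection $\calB_S \cong \calB_{a_1,\cusp}\times\dots\times\calB_{a_r,\cusp}$, translate $\nu_\varphi \le T$ into $\nu_{\tilde\varphi_i} \ll T$ for each cuspidal factor via \cref{eq:nu of Eis} and the shift by $\rho_{b_i}$, and then multiply the bounds from \cref{prop:Weyl law upper bound}. The only cosmetic difference is that you record the relation as $\nu_\varphi^2 = \sum_i \nu_{\varphi_i}^2$ with $\nu_{\varphi_i}\asymp\sqrt{b_i}\,\nu_{\tilde\varphi_i}+O(1)$, whereas the paper writes it as $\nu_\varphi+1\asymp\max_i\nu_{\varphi_i}+1$; since $r$ and the $b_i$ are bounded, these are equivalent.
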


\begin{proof}
As described above, there is a bijection between $\calB_S$ and $\B_{a_1,\cusp}\times\dots \times \calB_{a_r,\cusp}$. Moreover, under this bijection, by combining the estimates for $\nu$ of residual forms above and \cref{eq:nu of Eis} we have 
\[
\nu_\varphi +1 \asymp \max_{i=1}^r{\nu_{\varphi_i}}+1.
\]
The estimate now follows from \cref{prop:Weyl law upper bound}.
\end{proof}

\subsection{Local \texorpdfstring{$L^2$}{L2}-Bounds of Eisenstein Series}
\label{subsec:L2 bounds on Eisenstein}

Let $\varphi\in \calB_P$ and $\lambda \in i\a_P^*$, and let $\Eis_P(\varphi,\lambda) \in C^\infty(\X)$ be the corresponding Eisenstein series. It is known that the Eisenstein series grow polynomially near the cusp. It is a natural and challenging problem to find \emph{good} pointwise upper bound of $\Eis_P(\varphi,\lambda)$. 

A more tractable approach is to take a compact subset $\Omega\subset \X$ of positive measure and ask the size of $\|\Eis_P(\varphi,\lambda)|_\Omega\|_{2}$
as $\nu_{\varphi,\lambda}\to\infty$. In this paper we need an upper bound of $\|\Eis_P(\varphi,\lambda)|_\Omega\|^2_{2}$ on an average over $\lambda$ in a long interval. 
One may deduce certain bounds of such an average from the local Weyl law (cf. \cref{prop:Weyl Law local}) or via the improved $L^\infty$-bounds in \cite{blomer2016supnorm-pgln}, but such bounds are not sufficient for our purposes. 

One expects that $\|\Eis_P(\varphi,\lambda)|_\Omega\|_{2}$ remains essentially bounded in $\nu_{\varphi,\lambda}$, which is an analogue of the Lindel\"of hypothesis for the $L$-functions.
More precisely, we expect that 
\begin{equation}\label{conj:pointwise}
   \intop_{\Omega}|\Eis_P(\varphi,\lambda)(x)|^2 \d x \ll_{\Omega} \log^{n-1}(1+\nu_{\varphi,\lambda}).
\end{equation}
Note that for $n=2$ the above is classically known. We refer to \cite{jana2022eisenstein-average} for a detailed discussion.

We remark that a recent result of Assing--Blomer \cite[Theorem 1.5]{assing2022density} on optimal lifting for $\SL_n(\Z/q\Z)$ also requires such a bound on the local $L^2$-growth but in a non-archimedean aspect; see \cite[Hypothesis 1]{assing2022density}.

Proving \cref{conj:pointwise} seems to be quite difficult. A natural way to approach the problem is via the higher rank \emph{Maass--Selberg relations} due to Langlands, at least when $\varphi$ is \emph{cuspidal}. Among many complications that one faces through this approach (see \cite[\S 1.3]{jana2022eisenstein-average}) the major one involves \emph{standard (in $\nu_{\varphi,\lambda}$ aspect)} zero-free region for various $\GL(n)\times\GL(m)$ Rankin--Selberg $L$-functions, which are available only in a very few cases. In a forthcoming work \cite{jana2022eisenstein-pointwise} we give a conditional result towards what expected in \cref{conj:pointwise}. 

A relatively easy problem would be to find upper bound of a short $\lambda$ average of $\|\Eis_P(\varphi,\lambda)|_\Omega\|^2_2$. In fact, we expect
\begin{equation}\label{conj:average}
   \intop_{\|\lambda'-\lambda\|\le1}\intop_{\Omega}|\Eis_P(\varphi,\lambda')(x)|^2 \d x \d\lambda' \ll_{\Omega} \log^{n-1}(1+\nu_{\varphi,\lambda}).
\end{equation}
In a companion paper \cite{jana2022eisenstein-average} we study these problems in detail for a general reductive groups. 

For the current paper we only need to find an upper bound of $\|\Eis_P(\varphi,\lambda)|_\Omega\|^2_2$ on an average over $\lambda$ over a long interval so that the bound in the $\varphi$ aspect is only polynomial in $\nu_\varphi$ with very small degree. However, it is important for us that the bound is uniform over all $\varphi$, cuspidal or not. We describe the required estimate below.

\begin{prop}\label{thm:maass-selberg}
For every $\varphi \in \calB_P$ we have
\[
\intop_{\lambda \in i\a_P^*, \|\lambda\|\le T}\intop_{\Omega}|\Eis_P(\varphi,\lambda)(x)|^2 \d x \d \lambda \ll_{\Omega} \log(1+T+\nu_{\varphi})^{n-1}T^{\dim \a_P},
\]
for any $T\ge 1$.
\end{prop}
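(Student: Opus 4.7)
The approach is via Arthur's truncation operator and the higher-rank Maass--Selberg relations. Since $\Omega \subset \X_\bbA$ is compact, there is a truncation parameter $T_0 = T_0(\Omega)$ such that Arthur's truncation $\Lambda^{T_0}\Eis_P(\varphi,\lambda)$ coincides with $\Eis_P(\varphi,\lambda)$ on $\Omega$ (the truncation modifies the Eisenstein series only outside a fixed compact region in the Siegel domains). This reduces the proposition to bounding
\[
\intop_{\|\lambda\|\le T,\,\lambda\in i\a_P^*}\|\Lambda^{T_0}\Eis_P(\varphi,\lambda)\|_{L^2(\X_\bbA)}^2 \d \lambda \ll_\Omega \log(1+T+\nu_\varphi)^{n-1} T^{\dim\a_P}.
\]

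For $\lambda \in i\a_P^*$ in general position, assuming first that $\varphi$ comes from cuspidal data on the Levi of $P$, the Maass--Selberg formula of Langlands expresses $\|\Lambda^{T_0}\Eis_P(\varphi,\lambda)\|_{L^2}^2$ as a finite sum, indexed by pairs of Weyl elements $(s,s')$ intertwining $P$ with parabolics having the same Levi, of terms of the shape
\[
\lim_{\lambda' \to \lambda}\frac{e^{\langle s'\lambda' - s\lambda,\, T_0\rangle}}{\prod_\alpha\langle s'\lambda' - s\lambda,\,\alpha^\vee\rangle}\,\langle M(s',\lambda')\varphi,\,M(s,\lambda)\varphi\rangle,
\]
in which the apparent singularities of individual summands cancel after combining terms. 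Each global intertwining operator $M(s,\lambda)$ is unitary on $i\a_P^*$; for spherical vectors it factors through Gindikin--Karpelevich into ratios of completed Rankin--Selberg $L$-values attached to the cuspidal data of $\varphi$.

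Now integrating over $\|\lambda\|\le T$: away from the finitely many singular hyperplanes $\{s'\lambda = s\lambda\}$, each summand is $O_\Omega(1)$, which yields the bulk contribution of order $T^{\dim\a_P}$. Near the singular hyperplanes, the Maass--Selberg denominators combine with the intertwining $L$-factors to produce a locally integrable integrand; a careful estimate (using standard bounds for Rankin--Selberg $L$-functions on $\Re(s)=1$ and the fact that at most $n-1$ independent linear forms can vanish simultaneously along the integration) yields the logarithmic loss $\log^{n-1}(1+T+\nu_\varphi)$ after averaging in $\lambda$.

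For general $\varphi \in \calB_P$ whose Levi data is residual rather than cuspidal, \cref{subsec:residual sepctrum} describes such $\varphi$ as an iterated residue of Eisenstein series induced from cuspidal data on a finer standard parabolic; the proposition then follows by induction on the rank of $P$. The main obstacle is the precise combinatorial analysis of the Maass--Selberg sum near its singular hyperplanes together with the uniform $L$-function estimates needed to extract the sharp $\log^{n-1}$ power; the complete execution of this argument, for a general reductive group, is carried out in the companion paper \cite{jana2022eisenstein-average}.
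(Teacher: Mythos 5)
The paper's actual proof is much shorter and more modular than what you propose: it simply covers the ball $\{\lambda\in i\a_P^*:\|\lambda\|\le T\}$ by $\ll T^{\dim\a_P}$ unit balls, applies the short-interval $L^2$-bound of the companion paper (the estimate you see quoted as \eqref{conj:average}, proved there as a black-box theorem) to each unit ball, and sums up. The $T^{\dim\a_P}$ comes from the number of balls and $\log^{n-1}(1+T+\nu_\varphi)$ is inherited directly from the short-interval bound; there is no need to re-enter the Maass--Selberg machinery at this level. What you propose instead is a direct attack on the long integral via Arthur truncation and the Maass--Selberg formula, which is in effect a re-derivation of the companion paper's content for the long interval. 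That is not wrong in principle, but it discards the modularity the authors built: the hard analytic input is packaged once as a unit-ball statement, and the present proposition is then reduced to it trivially.

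Two caveats on your outline. First, your attribution of the $\log^{n-1}$ factor to cancellations near the Maass--Selberg singular hyperplanes ``after averaging in $\lambda$'' is not quite right: in the companion paper's short-interval bound the $\log^{n-1}(1+\nu_{\varphi,\lambda})$ is already present \emph{without} a long $\lambda$-average, and it arises from pointwise estimates of the completed Rankin--Selberg $L$-factors on $\Re(s)=1$ (each contributing a logarithm, with $n-1$ such factors). The singularity cancellation you describe controls the local integrability, not the logarithmic loss. Second, the residual case is the genuinely delicate part; invoking ``induction on the rank of $P$'' sweeps under the rug the fact that the Maass--Selberg formula for Eisenstein series with residual inducing data needs substantial care (this is one of the main technical difficulties the companion paper is designed to handle, and the paper's proof avoids re-exposing it by citing a theorem that already covers all $\varphi\in\calB_P$). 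Since you end by deferring exactly this hard work to the companion paper anyway, you would be better served adopting the covering reduction as the authors do.
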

The theorem should be compared to the simple upper bounds using the local Weyl law as in \cref{prop:Weyl Law local}), which allows us to deduce a similar statement, but with $T^{\dim \a_P}$ replaced by $T^d$, which is insufficient for our purpose.

We remark that for $n=3$, \cref{thm:maass-selberg} was proved by Miller in \cite{miller2001existence}, as one of the main ingredients in proving Weyl's law for $\SL_3(\Z) \backslash \SL_3(\R)/\SO_3(\R)$. Therefore, for $n=3$ (and $n=2$) the results of this paper are unconditional on the result of the companion paper. 
\cref{thm:maass-selberg} generalizes Miller's result to higher rank, and similarly implies Weyl's law in the same way. However, we heavily rely on \cite{muller2007weyl}, so this does not lead to a new proof.

\begin{proof}[Proof of \cref{thm:maass-selberg}]
We find $\{\eta_j\}_{j=1}^k\in i\a_P^*$ with $k\ll T^{\dim\a_P}$ and $\|\eta_j\|\le T$ so that
\[\{\lambda\in i\a_P^*\mid \|\lambda\| \le T\} \subset \cup_{j=1}^k \{\lambda\in i\a_P^*\mid \|\lambda-\eta_j\| \le 1\}.\]
Clearly, we can majorize the integral in the proposition by 
\[\sum_{j=1}^k \intop_{\substack{\lambda \in i\a_P^*\\ \|\lambda-\eta_j\|\le 1}}\intop_{\Omega}|\Eis_P(\varphi,\lambda)(x)|^2 \d x \d \lambda.\]
We apply \cite[Theorem 1]{jana2022eisenstein-pointwise} to each summand on the right hand side above with $\varphi_0=\varphi$ and $\lambda_0=\eta_j$ to conclude that the integral in the proposition is bounded by
\[\ll_{\Omega} k \max_j \log(1+\nu_\varphi+\|\eta_j\|)^{n-1}.\]
We conclude the proof by employing the bounds on $\eta_j$ and $k$.
\end{proof}

\section{Reduction to a Spectral Problem}
\label{sec:reduction to spectral}
In this section, we finally begin proving the main results \cref{thm:kappa theorem intro} and \cref{thm:optimal-SDH}, and we will reduce it to a spectral problem. Then we will apply a few different reductions to simplify the problem even further. 

Consider the set 
\[S(k):= \{\gamma \in \SL_n(\Z[1/p])\mid \Ht(\gamma)\le k\}.\]

\begin{lemma}
The image of $S(k)$ in $\PGL_n(\Q)$ is equal to $\tilde{R}(p^{nk})$.
\end{lemma}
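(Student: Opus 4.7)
The plan is to prove the two inclusions directly by unwinding the definitions of $\Ht$, $S(k)$, $\tilde R(p^{nk})$, and the projection $\GL_n(\Q)\to \PGL_n(\Q)$. The scalar $p^k\in\Q^\times$ is the link: multiplying a height-$k$ element of $\SL_n(\Z[1/p])$ by $p^k$ lands in $\Mat_n(\Z)$ with determinant $p^{nk}$, and multiplication by a scalar is invisible in $\PGL_n$.

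For the inclusion of the image of $S(k)$ in $\tilde R(p^{nk})$, I would take $\gamma\in S(k)$, set $A:=p^{k}\gamma$, and observe $A\in \Mat_n(\Z)$ by the definition of $\Ht$, while $\det(A)=p^{nk}\det(\gamma)=p^{nk}$ since $\gamma\in\SL_n$. Hence $A\in R(p^{nk})$, and $\gamma=p^{-k}A$ maps to the same element of $\PGL_n(\Q)$ as $A$, so the class of $\gamma$ lies in $\tilde R(p^{nk})$.

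For the reverse inclusion I would start with $A\in R(p^{nk})$ and set $\gamma:=p^{-k}A\in\Mat_n(\Z[1/p])$. Then $\det(\gamma)=p^{-nk}\det(A)=1$, so $\gamma\in\SL_n(\Z[1/p])$; and $p^k\gamma=A\in\Mat_n(\Z)$ forces $\Ht(\gamma)\le k$, i.e.\ $\gamma\in S(k)$. Again $\gamma$ and $A$ project to the same class in $\PGL_n(\Q)$, so every element of $\tilde R(p^{nk})$ arises from the image of $S(k)$.

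There is essentially no obstacle; the only point to watch is that the argument uses $\det(\gamma)=1$ in the forward direction to pin down the determinant of $p^k\gamma$ as exactly $p^{nk}$ (not merely a $p$-power), and uses the same condition in reverse to ensure that the lift $p^{-k}A$ actually lies in $\SL_n$. Both rely crucially on the exponent being $nk$ rather than some other power of $p$, which is why the statement is phrased as $\tilde R(p^{nk})$ and not a general $\tilde R(p^m)$.
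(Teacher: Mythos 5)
Your proof is correct and matches the paper's argument: the paper simply notes that $\gamma\mapsto p^k\gamma$ is a bijection $S(k)\to R(p^{nk})$, which is exactly the two-sided verification you carry out, and then that this map is invisible after projecting to $\PGL_n(\Q)$.
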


\begin{proof}
The map $S(k)\to R(p^{nk})$ defined by $\gamma\mapsto p^k\gamma$ is a bijection. Therefore, the images of them in $\PGL_n(\Q)$ are the same.
\end{proof}

Below we identify $\X := \SL_n(\Z) \backslash \H^n \cong \PSL_n(\Z) \backslash \PGL_n(\R) / K_\infty$. This implies that the action of $\gamma \in \SL_n(\R)$ on $\H^n$ depends only on the image of $\gamma$ in $\PGL_n(\R)$.

\begin{defn}\label{def:admissible}
Let $x,x_0 \in \X$ which we identify with some lifts of them in $\H^n \cong \PGL_n(\R) / K_\infty$. Also, let $\varepsilon>0$ and $k\in\Z_{\ge 0}$. We say that the pair $(x,x_0)$ is \emph{$(\varepsilon,k)$-admissible} if there is a solution $\gamma \in \tilde{R}(p^{nk})$ to $\dist(x,\gamma x_0)\le \varepsilon$.
\end{defn}

Notice that since $\tilde{R}(p^{nk})$ is left and right $\PSL_n(\Z)$-invariant, the above definition does not depend on the lifts of $x,x_0 \in \H^n$.

Unraveling \cref{def:Diopnatine exponents intro} and \cref{def:admissible} we get the following.
\begin{lemma}\label{lem:alternative-kappa-def}
Let $x,x_0 \in \X$. The Diophantine exponent $\kappa(x,x_0)$ is the infimum over $\zeta< \infty$ such that there exists $\varepsilon_0 = \varepsilon_0(x,x_0,\zeta)$ with the property that for every $\varepsilon<\varepsilon_0$ the pair $(x,x_0)$ is $(\varepsilon,\zeta\frac{n+2}{2n}\log_p(\varepsilon^{-1}))$-admissible.
\end{lemma}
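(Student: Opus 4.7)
The plan is a definitional unwinding rather than a genuinely new argument; all the substantive content is already packaged in the preceding lemma. Write $k(\varepsilon,\zeta) := \zeta\frac{n+2}{2n}\log_p(\varepsilon^{-1})$. I need to show that the condition appearing in \cref{def:Diopnatine exponents intro} matches the condition that $(x,x_0)$ be $(\varepsilon, k(\varepsilon,\zeta))$-admissible in the sense of \cref{def:admissible}.

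Starting from \cref{def:Diopnatine exponents intro}, the requirement is that there exists $\gamma \in \SL_n(\Z[1/p])$ with $\dist(\gamma^{-1}x, x_0) \le \varepsilon$ and $\Ht(\gamma) \le k(\varepsilon,\zeta)$. The first move is to exploit the $\SL_n(\R)$-invariance of the Riemannian metric on $\H^n$: applying $\gamma$ to both arguments gives $\dist(\gamma^{-1}x, x_0) = \dist(x, \gamma x_0)$, so the condition is equivalent to the existence of $\gamma \in S(k(\varepsilon,\zeta))$ satisfying $\dist(x, \gamma x_0) \le \varepsilon$.

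The remaining step is to pass from $S(k)$ to its $\PGL_n(\Q)$-image. The preceding lemma identifies this image with $\tilde{R}(p^{nk})$, and the action of $\SL_n(\R)$ on $\H^n \cong \PGL_n(\R)/K_\infty$ factors through $\PGL_n(\R)$, so the point $\gamma x_0 \in \X$ depends only on the image of $\gamma$ in $\PGL_n$. Consequently, the existence of some $\gamma \in S(k)$ realizing $\dist(x, \gamma x_0) \le \varepsilon$ is equivalent to the existence of an element of $\tilde{R}(p^{nk})$ realizing the same inequality, which is precisely the definition of $(\varepsilon, k)$-admissibility in \cref{def:admissible}. Taking the infimum over $\zeta$ on both sides concludes the identification.

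I anticipate no real obstacle: the one minor bookkeeping point is the swap between $\gamma^{-1}$ and $\gamma$, which is disposed of by metric invariance and so avoids any need to check that $S(k)$ is itself closed under inversion. Everything else is a restatement of the surrounding setup.
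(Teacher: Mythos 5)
Your proof is correct and is precisely the definitional unwinding the paper intends; the paper states the lemma with only the remark ``Unraveling \cref{def:Diopnatine exponents intro} and \cref{def:admissible} we get the following,'' and your argument (metric invariance to swap $\gamma^{-1}$ for $\gamma$, then the preceding lemma plus the fact that the $\SL_n(\R)$-action on $\H^n$ factors through $\PGL_n(\R)$) supplies exactly the details being elided. You correctly flag that the swap via metric invariance is what lets you avoid asking whether $S(k)$ is closed under inversion, which it is not in general.
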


Let $k_\varepsilon \in C_c^\infty(K_\infty \backslash \PGL_n(\R) / K_\infty)$ be as in \cref{lem:k_0 conditions}. For $x_0 \in \X$, let $K_{\varepsilon,x_0}^{\X}\in C_c^\infty(\X)$ be the automorphic kernel
\[
K_{\varepsilon,x_0}^{\X}(y) := \sum_{\gamma \in \PSL_n(\Z)} k_\varepsilon (x_0^{-1} \gamma^{-1} y).
\]
It is simple to see that 
\[
\intop_{\X} K_{\varepsilon,x_0}^{\X}(y) \d y = \intop_{\PGL_n(\R)} k_\varepsilon(x_0^{-1} g) \d g = 1.
\]

Recall the Hecke operator $T^*(p^k)$ from \cref{subsec:adelic formulation}, and the fact that it acts on functions on $\X$ by \cref{rem:hecke on X}.

\begin{lemma}\label{lem:non-zero to solving equation}
Assume that 
%
\[T^*(p^{nk_1})T^*(p^{nk_2})K_{\varepsilon,x_0}^\X(x)\ne 0,\]
then the pair $(x,x_0)$ is $(\varepsilon,k_1+k_2)$-admissible.
\end{lemma}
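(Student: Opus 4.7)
The plan is to unwind the three averaging operations (the two Hecke operators and the automorphic kernel) and read off an explicit $\gamma\in\tilde{R}(p^{n(k_1+k_2)})$ witnessing the admissibility.

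First I would write out the iterated Hecke action explicitly. Using \cref{lem:double coset comparison} and the definition of $T^*(p^l)$ one has
\[
T^*(p^{nk_1})T^*(p^{nk_2})K^{\X}_{\varepsilon,x_0}(x)=\frac{1}{|A(p^{nk_1})||A(p^{nk_2})|}\sum_{\gamma_1\in A(p^{nk_1})}\sum_{\gamma_2\in A(p^{nk_2})}K^{\X}_{\varepsilon,x_0}\bigl((\gamma_1\gamma_2)^{-1}x\bigr).
\]
Since $K^{\X}_{\varepsilon,x_0}\ge 0$ (as $k_\varepsilon$ can be chosen nonnegative from its construction in \cref{lem:k_0 conditions}; otherwise one simply notes that nonvanishing of the sum forces at least one summand to be nonzero), the hypothesis yields a pair $(\gamma_1,\gamma_2)\in A(p^{nk_1})\times A(p^{nk_2})$ with $K^{\X}_{\varepsilon,x_0}\bigl((\gamma_1\gamma_2)^{-1}x\bigr)\ne 0$.

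Next I would unfold the automorphic kernel: by definition
\[
K^{\X}_{\varepsilon,x_0}\bigl((\gamma_1\gamma_2)^{-1}x\bigr)=\sum_{\delta\in\PSL_n(\Z)}k_\varepsilon\bigl(x_0^{-1}\delta^{-1}(\gamma_1\gamma_2)^{-1}x\bigr),
\]
so there exists $\delta\in\PSL_n(\Z)$ with $k_\varepsilon\bigl(x_0^{-1}\delta^{-1}(\gamma_1\gamma_2)^{-1}x\bigr)\ne 0$. By property (1) of \cref{lem:k_0 conditions} the function $k_\varepsilon$ is supported on $B_\varepsilon=K_\infty\{\exp\alpha:\|\alpha\|\le\varepsilon\}K_\infty$. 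Viewing the argument in $\H^n\cong\PGL_n(\R)/K_\infty$ and using that $\dist$ is invariant under the natural left $\PGL_n(\R)^+$-action (which contains the images of $\delta$ and $\gamma_1\gamma_2$), this is equivalent to
\[
\dist\bigl(\delta\gamma_1\gamma_2\cdot x_0,\,x\bigr)\le\varepsilon.
\]

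Finally I would check that $\gamma:=\delta\gamma_1\gamma_2$ lies in $\tilde{R}(p^{n(k_1+k_2)})$. Lifting to $\GL_n(\Q)$, the element $\gamma_1\gamma_2$ is the product of a matrix in $R(p^{nk_1})$ and a matrix in $R(p^{nk_2})$, hence an integer matrix of determinant $p^{n(k_1+k_2)}$; left-multiplying by $\delta\in\PSL_n(\Z)$ preserves integrality and determinant, so the projection to $\PGL_n(\Q)$ belongs to $\tilde{R}(p^{n(k_1+k_2)})$. Together with the distance bound, this exhibits $\gamma\in\tilde{R}(p^{n(k_1+k_2)})$ with $\dist(x,\gamma x_0)\le\varepsilon$, verifying $(\varepsilon,k_1+k_2)$-admissibility of $(x,x_0)$ in the sense of \cref{def:admissible}.

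The only subtlety is the passage between $\SL_n$ and $\PGL_n$: one has to confirm that the $\SL_n(\R)$-invariant Riemannian metric on $\H^n$ is preserved under the actions of $\PSL_n(\Z)$ and of the (representatives of) elements of $\tilde{R}(p^{nk_i})$, which is immediate because these act through $\PGL_n(\R)^+$ on $\H^n\cong\PGL_n(\R)^+/K_\infty^+$. Everything else is bookkeeping.
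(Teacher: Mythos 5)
Your proof is correct and takes essentially the same route as the paper: write out the triple sum (two Hecke averages and the unfolded kernel), extract a nonzero summand, translate the support condition on $k_\varepsilon$ into a distance bound, and check that the resulting group element lies in $\tilde{R}(p^{n(k_1+k_2)})$. One small slip: unwinding $k_\varepsilon\bigl(x_0^{-1}\delta^{-1}\gamma_2^{-1}\gamma_1^{-1}x\bigr)\ne 0$ gives $\dist(x,\gamma_1\gamma_2\delta\, x_0)\le\varepsilon$ with the $\PSL_n(\Z)$-element on the \emph{right}, not $\delta\gamma_1\gamma_2$ as you wrote; this is harmless since $\tilde{R}(p^{n(k_1+k_2)})$ is both left- and right-$\PSL_n(\Z)$-invariant, but the paper's $\gamma'=\gamma_1\gamma_2\gamma$ is the cleaner bookkeeping.
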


\begin{proof}
%
We have
\[
0\neq T^*(p^{nk_1})T^*(p^{nk_2})K_{\varepsilon,x_0}^{\X}(x) = \sum_{\gamma_1\in A(p^{nk_1})}\sum_{\gamma_2\in A(p^{nk_2})}\sum_{\gamma\in \PSL_n(\Z)} k_{\varepsilon}(x_0^{-1}\gamma^{-1}\gamma_2^{-1}\gamma_1^{-1}x).
\]
So there is a $\gamma':= \gamma_1 \gamma_2 \gamma$ such that $k_{\varepsilon}(x_0^{-1}\gamma^{\prime-1}x)\ne 0$. It holds that $\gamma' \in \tilde{R}(p^{n(k_1+k_2)})$. 
By the assumption of the support of $k_\varepsilon$, we have 
\[
\dist(x_0^{-1}\gamma^{\prime-1}x,e)= \dist(x,\gamma' x_0)\le \varepsilon,
\]
as needed.
\end{proof}

Let $\pi_\X = \frac{\One_\X}{m(\X)}$ be the $L^1$-normalized characteristic function on $\X$. 

\begin{lemma}\label{lem:L2 to solving equation}
Let $x_0\in\X$. Assume that 
\[
\|T^*(p^{nk_1})T^*(p^{nk_2})K_{\varepsilon,x_0}^{\X}-\pi_\X\|_2 \,\,\le \frac{c}{\sqrt{m(\X)}}.
\]
Then there is a subset $Y \subset \X$, such that 
\[
m(Y)\ge m(\X)(1-c^2),
\]
such that for all $x \in Y$ the pair $(x,x_0)$ is $(\varepsilon,k_1+k_2)$-admissible.
\end{lemma}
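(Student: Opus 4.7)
The plan is to reduce the claim to a pointwise Chebyshev-type argument that controls the measure of the set where $T^*(p^{nk_1})T^*(p^{nk_2})K_{\varepsilon,x_0}^{\X}$ vanishes, and then to invoke \cref{lem:non-zero to solving equation}.

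Write
\[F(x):=T^*(p^{nk_1})T^*(p^{nk_2})K_{\varepsilon,x_0}^{\X}(x),\]
and define
\[Z:=\{x\in\X:F(x)=0\},\qquad Y:=\X\setminus Z.\]
First I would observe that on $Z$ the function $F-\pi_\X$ is identically $-1/m(\X)$. Consequently,
\[\|F-\pi_\X\|_2^2\ \ge\ \intop_Z \left|F(x)-\pi_\X(x)\right|^2\d x\ =\ \frac{m(Z)}{m(\X)^2}.\]
Combining with the hypothesis $\|F-\pi_\X\|_2\le c/\sqrt{m(\X)}$ yields $m(Z)\le c^2 m(\X)$, hence $m(Y)\ge m(\X)(1-c^2)$.

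The last step is to notice that for any $x\in Y$ we have $F(x)\ne 0$, so \cref{lem:non-zero to solving equation} applies directly and shows that the pair $(x,x_0)$ is $(\varepsilon,k_1+k_2)$-admissible. This completes the plan. The argument is essentially a one-line Markov/Chebyshev inequality, so there is no substantial obstacle; the only thing to keep track of is that $\pi_\X$ is the $L^1$-normalized constant $1/m(\X)$, which fixes the constants in the pointwise lower bound on $|F-\pi_\X|$ over $Z$.
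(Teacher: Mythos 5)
Your proposal is correct and follows exactly the same route as the paper's own proof: define $Y$ as the set where the averaged kernel is nonzero, bound $m(\X\setminus Y)$ by restricting the $L^2$-norm of $F-\pi_\X$ to the complement (where it equals $\pi_\X$ in absolute value), and invoke \cref{lem:non-zero to solving equation} for admissibility on $Y$. No differences worth noting.
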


\begin{proof}
Let $Y := \{x\in \X \mid T^*(p^{nk_1})T^*(p^{nk_2})K_{\varepsilon,x_0}^{\X}(x)\ne 0\}$. By \cref{lem:non-zero to solving equation}, each $x\in Y$ is $(\varepsilon,k_1+k_2)$-admissible. On the other hand, 
\[
\|T^*(p^{nk_1})T^*(p^{nk_2})K_{\varepsilon,x_0}^{\X}-\pi_\X\|_2^2\,\, \ge \intop_{\X\setminus Y} \pi_\X(x)^2 \d x = m(\X \setminus Y) / m(\X)^2.
\]
We deduce that $m(\X\setminus Y)\le c^2m(\X)$, as needed.
\end{proof}


\begin{lemma}\label{lem:kappa from L2 local}
Let $x_0\in\X$ and $\beta\ge 1$. Assume that there is $\alpha>0$ such that for every $\delta>0$ there is $\varepsilon_0>0$, such that for every $0<\varepsilon<\varepsilon_0$ there are $k_1,k_2$ with $k_1+k_2\le (1+\delta)\beta \frac{n+2}{2n}\log_p(\varepsilon^{-1})$, such that we have
\[
\|T^*(p^{nk_1})T^*(p^{nk_2})K_{\varepsilon,x_0}^{\X}-\pi_\X\|_2\,\, \le \varepsilon^{\alpha \delta}.
\]
Then $\kappa(x_0)\le \beta$.
\end{lemma}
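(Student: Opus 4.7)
\medskip

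\textbf{Proof plan for \cref{lem:kappa from L2 local}.}

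The plan is to combine the hypothesis with \cref{lem:L2 to solving equation} to produce, for each small $\varepsilon$, a ``good set'' $Y_\varepsilon \subset \X$ whose complement has polynomially small measure, and then apply a Borel--Cantelli argument along a geometric sequence $\varepsilon_m \to 0$ to pass from ``good on average'' to ``good for almost every $x$.''

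First, fix $\delta > 0$ and let $\varepsilon < \varepsilon_0(\delta)$. Applying \cref{lem:L2 to solving equation} with the choice $c := \sqrt{m(\X)}\,\varepsilon^{\alpha\delta}$, the hypothesis produces a subset $Y_\varepsilon \subset \X$ with
\[
m(\X \setminus Y_\varepsilon) \le m(\X)^2\,\varepsilon^{2\alpha\delta},
\]
such that every $x \in Y_\varepsilon$ is $(\varepsilon, k_1+k_2)$-admissible with $k_1+k_2 \le (1+\delta)\beta\,\tfrac{n+2}{2n}\log_p(\varepsilon^{-1})$.

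Second, fix a geometric sequence, say $\varepsilon_m := p^{-m}$. Then
\[
\sum_{m \ge 1} m(\X \setminus Y_{\varepsilon_m}) \le m(\X)^2 \sum_{m \ge 1} p^{-2\alpha\delta m} < \infty,
\]
so by Borel--Cantelli the exceptional set $E_\delta := \limsup_m (\X \setminus Y_{\varepsilon_m})$ is null. In particular, for every $x \in \X \setminus E_\delta$ there exists $m_0(x,\delta)$ such that $x \in Y_{\varepsilon_m}$ for all $m \ge m_0(x,\delta)$.

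Third, I upgrade ``admissibility along the sequence $\varepsilon_m$'' to ``admissibility for every small $\varepsilon$.'' Given any $\varepsilon$ sufficiently small, let $m$ be the unique integer with $\varepsilon_{m+1} < \varepsilon \le \varepsilon_m$. Since being $(\varepsilon_m, k)$-admissible trivially implies being $(\varepsilon, k)$-admissible (the distance condition only gets easier), for $x \in \X \setminus E_\delta$ with $m \ge m_0(x,\delta)$ we get that $(x, x_0)$ is $(\varepsilon, k_1+k_2)$-admissible with
\[
k_1 + k_2 \le (1+\delta)\beta\,\tfrac{n+2}{2n}\log_p(\varepsilon_m^{-1}) \le (1+\delta)\beta\,\tfrac{n+2}{2n}\bigl(\log_p(\varepsilon^{-1}) + 1\bigr).
\]
For $\varepsilon$ small enough (depending on $x$ and $\delta$), the additive $+1$ is absorbed and we obtain $k_1+k_2 \le (1+2\delta)\beta\,\tfrac{n+2}{2n}\log_p(\varepsilon^{-1})$. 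By \cref{lem:alternative-kappa-def} this gives $\kappa(x, x_0) \le (1+2\delta)\beta$.

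Finally, taking a countable sequence $\delta_j \to 0$ and letting $E := \bigcup_j E_{\delta_j}$, which is still null, every $x \in \X \setminus E$ satisfies $\kappa(x, x_0) \le (1+2\delta_j)\beta$ for all $j$, hence $\kappa(x, x_0) \le \beta$. By \cref{def:Diopnatine exponents intro} this yields $\kappa(x_0) \le \beta$, completing the proof. The argument is essentially bookkeeping: the only real ingredient is the $L^2$-to-pointwise passage of \cref{lem:L2 to solving equation}, and the main (mild) subtlety is ensuring the Borel--Cantelli sequence is geometric so that the exceptional measure is summable uniformly in $\delta$.
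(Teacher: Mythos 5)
Your proof takes essentially the same route as the paper: apply \cref{lem:L2 to solving equation} to turn the $L^2$ bound into a polynomially small ``bad set'' at each scale, run Borel--Cantelli along a geometric sequence of scales, interpolate from the sequence to all small $\varepsilon$, and let $\delta\to 0$ along a countable sequence. (The paper phrases step two in terms of the bad set $Z_{\varepsilon,\delta}$ and absorbs the interpolation error by shrinking $\delta$ to $\delta/2$; you enlarge $(1+\delta)$ to $(1+2\delta)$ instead. These are interchangeable.)

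There is, however, a genuine sign error in your interpolation step. You fix $m$ with $\varepsilon_{m+1} < \varepsilon \le \varepsilon_m$ and then assert that ``being $(\varepsilon_m, k)$-admissible trivially implies being $(\varepsilon, k)$-admissible (the distance condition only gets easier).'' This is backwards: since $\varepsilon \le \varepsilon_m$, finding $\gamma$ with $\dist(x,\gamma x_0)\le\varepsilon$ is \emph{harder} than finding one with $\dist(x,\gamma x_0)\le\varepsilon_m$, so $(\varepsilon_m,k)$-admissibility does not imply $(\varepsilon,k)$-admissibility. You need to use the \emph{lower} endpoint: for $m\ge m_0(x,\delta)$ you also have $x\in Y_{\varepsilon_{m+1}}$, hence $(x,x_0)$ is $(\varepsilon_{m+1},k_1+k_2)$-admissible; since $\varepsilon_{m+1}<\varepsilon$, this \emph{does} imply $(\varepsilon,k_1+k_2)$-admissibility. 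With this correction the inequality $k_1+k_2\le(1+\delta)\beta\frac{n+2}{2n}\log_p(\varepsilon_{m+1}^{-1})=(1+\delta)\beta\frac{n+2}{2n}(m+1)\le(1+\delta)\beta\frac{n+2}{2n}\bigl(\log_p(\varepsilon^{-1})+1\bigr)$ holds (using $\log_p(\varepsilon^{-1})\ge m$), which is exactly the bound you wrote; so the downstream estimates already reflect the corrected choice, and the rest of the argument goes through unchanged.
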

\begin{proof}
Let $\delta>0$. 
For $\varepsilon$ fixed, let $Z_{\varepsilon,\delta}\subset \X $ be the set of $x\in \X$ such that the pairs $(x,x_0)$ are not $(\varepsilon,k)$-admissible with $k \le (1+\delta)\beta\frac{n+2}{2n}\log_p(\varepsilon^{-1})$. 
Using \cref{lem:alternative-kappa-def} it suffices to prove that for almost every $x\in \X$, for $\varepsilon_0$ small enough depending on $x,\delta$, and $\varepsilon<\varepsilon_0$ we have $x\notin Z_{\varepsilon,\delta}$.

Let $\varepsilon_j := e^{-cj}$, for some $c>0$ sufficiently small relatively to $\delta$. Then for $\varepsilon$ small enough, there is $\varepsilon_j$ such that $Z_{\varepsilon,\delta} \subset Z_{\varepsilon_j,\delta/2}$. Therefore, it suffices to prove that for almost every $x\in \X$, for $m\in \Z_{\ge 0}$ large enough, $x\notin Z_{\varepsilon_j,\delta/2}$. Using the Borel--Cantelli lemma it is enough to prove that 
\begin{equation}\label{eq: kappa from L2 proof}
\sum_j m(Z_{\varepsilon_j,\delta/2})<\infty.    
\end{equation}
By the assumption and \cref{lem:L2 to solving equation}, there is $\varepsilon_0>0$ such that for $\varepsilon_j<\varepsilon_0$,
\[ m(Z_{\varepsilon_j,\delta/2})\ll \varepsilon_j^{2\alpha \delta}=e^{-2c\alpha\delta j}.\]
This shows that \cref{eq: kappa from L2 proof} holds, as needed.
\end{proof}

We now add an additional average over $x_0$. Let $\Omega\subset \X$ be a fixed compact subset of positive measure. 

\begin{lemma}\label{lem:kappa from L2 global}
Let $\beta\ge 1$. Assume that there is $\alpha>0$ such that for every $\delta>0$ there is $\varepsilon_0>0$, such that for every $0<\varepsilon<\varepsilon_0$ there are $k_1,k_2$ with $k_1+k_2\le (1+\delta)\beta \frac{n+2}{2n}\log_p(\varepsilon^{-1})$, such that we have
\[
\intop_\Omega \|T^*(p^{nk_1})T^*(p^{nk_2})K_{\varepsilon,x_0}^{\X}-\pi_\X\|_2^2 \d x_0 \le \varepsilon^{\alpha \delta}.
\]
Then $\kappa\le \beta$.
\end{lemma}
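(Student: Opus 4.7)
The plan is to upgrade the averaged $L^2$-bound to a pointwise bound for almost every $x_0 \in \Omega$, and then invoke \cref{lem:kappa from L2 local} together with the fact, stated earlier, that $\kappa(x_0) = \kappa$ for almost every $x_0 \in \H^n$.

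First I would discretize. Fix $\delta > 0$ and choose $c > 0$ small compared to $\delta$, setting $\varepsilon_j := e^{-cj}$ for $j \ge 1$. For each $j$ large enough that $\varepsilon_j < \varepsilon_0$, the hypothesis provides integers $k_1(j), k_2(j)$ with
\[
k_1(j) + k_2(j) \le (1+\delta)\beta \tfrac{n+2}{2n}\log_p(\varepsilon_j^{-1})
\]
and
\[
\intop_\Omega \bigl\|T^*(p^{nk_1(j)})T^*(p^{nk_2(j)})K_{\varepsilon_j,x_0}^{\X}-\pi_\X\bigr\|_2^2 \d x_0 \le \varepsilon_j^{\alpha \delta}.
\]
By Markov's inequality, the set
\[
E_j := \left\{x_0 \in \Omega \;\middle|\; \bigl\|T^*(p^{nk_1(j)})T^*(p^{nk_2(j)})K_{\varepsilon_j,x_0}^{\X}-\pi_\X\bigr\|_2^2 > \varepsilon_j^{\alpha \delta / 2}\right\}
\]
has measure $m(E_j) \le \varepsilon_j^{\alpha\delta/2} = e^{-c \alpha \delta j / 2}$, which is summable in $j$.

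By the Borel--Cantelli lemma, for almost every $x_0 \in \Omega$ there is $j_0 = j_0(x_0)$ such that $x_0 \notin E_j$ for all $j \ge j_0$. For such an $x_0$ the hypothesis of \cref{lem:kappa from L2 local} holds along the sequence $\varepsilon_j$ (with $\alpha$ replaced by $\alpha/4$, say), and the same interpolation between consecutive $\varepsilon_j$ as in the proof of \cref{lem:kappa from L2 local} handles arbitrary $\varepsilon \to 0$. Hence $\kappa(x_0) \le (1+\delta)\beta$ for almost every $x_0 \in \Omega$, and since $\delta > 0$ was arbitrary, $\kappa(x_0) \le \beta$ for almost every $x_0 \in \Omega$.

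Finally, from the proposition recalled in the introduction we have $\kappa(x_0) = \kappa$ for almost every $x_0 \in \H^n$. Since $\Omega$ has positive measure, the set of $x_0 \in \Omega$ where both $\kappa(x_0) = \kappa$ and $\kappa(x_0) \le \beta$ hold is non-empty (indeed of positive measure), so $\kappa \le \beta$. There is no real obstacle here beyond the standard Markov/Borel--Cantelli manoeuvre; the only subtlety is making sure that the discretization parameter $c$ is small enough that the interpolation to general $\varepsilon$ absorbs into the factor $(1+\delta)$, exactly as in \cref{lem:kappa from L2 local}.
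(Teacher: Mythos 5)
Your argument is correct and uses the same core ingredients as the paper's proof: Markov's inequality to pass from the averaged $L^2$ bound to a pointwise bound on $x_0$, Borel--Cantelli along the discrete scale $\varepsilon_j=e^{-cj}$, the $L^2$-to-admissibility step (via \cref{lem:L2 to solving equation}), and finally the ergodicity fact that $\kappa(x_0)=\kappa$ for almost every $x_0$, of which $\Omega$ contains a positive-measure subset. The only structural difference from the paper is that you run a nested Borel--Cantelli --- first on $x_0\in\Omega$ and then (inside \cref{lem:kappa from L2 local}) on $x$ --- whereas the paper estimates the measure of the bad set $Z_{\varepsilon,\delta}\subset\X\times\Omega$ directly and applies Borel--Cantelli once on the product space; the two are equivalent. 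You are also right to flag that \cref{lem:kappa from L2 local} cannot be invoked as a black box because you only have the $L^2$ bound at the scales $\varepsilon_j$, but its proof uses the hypothesis only at those scales, so rerunning its argument is fine; to be fully airtight you should also take a countable sequence $\delta_m\to 0$ and discard the (null) union of the associated exceptional sets before letting $\delta\to 0$ to pass from $\kappa(x_0)\le(1+\delta)\beta$ to $\kappa(x_0)\le\beta$.
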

\begin{proof}

Let $\delta>0$. Let $Z_{\varepsilon,\delta} \subset \X \times \X$ be the set of $(x,x_0)\in \X\times \Omega$ that are not $(\varepsilon,k)$-admissible, for $k \le  (1+\delta)\beta\frac{n+2}{2n}\log_p(\varepsilon^{-1})$. 

Since $\kappa = \kappa(x,x_0)$ for almost every $x,x_0$ (see \cref{sec:GGN}), using \cref{lem:alternative-kappa-def} it suffices to prove that for almost every $x_0 \in \Omega$ and almost every $x \in \X$, there is an $\varepsilon_0$ such that for $\varepsilon<\varepsilon_0$ the pair
$(x,x_0)\notin Z_{\varepsilon,\delta}$.
Using the same argument as in the proof of \cref{lem:kappa from L2 local}, we may assume that $\varepsilon = \varepsilon_j= e^{-cj}$, and using Borel--Cantelli it is enough to prove that 
\[
\sum_j m(Z_{\varepsilon_j,\delta})< \infty.
\]
For $\varepsilon<\varepsilon_0$ small enough, let $Y_{\varepsilon,\delta}\subset \Omega$ be the set of $x_0\in \Omega$ such that for $k$ as in the assumption of the lemma
\[
\|T^*(p^{nk_1})T^*(p^{nk_2})K^\X_{\varepsilon,x_0}-\pi_\X\|_2^2\,\, \le \varepsilon^{\alpha \delta/2}.
\]
We claim that $m(\Omega-Y_{\varepsilon,\delta})\le \varepsilon^{\alpha \delta/2}$. 
Indeed,
\[
\intop_{\Omega-Y_{\varepsilon,\delta}} \|T^*(p^{nk_1})T^*(p^{nk_2})K^\X_{\varepsilon,x_0}-\pi_\X\|_2^2 \d x_0 \ge \varepsilon^{\alpha\delta/2} m(\Omega - Y_{\varepsilon,\delta}),
\]
so
$
\varepsilon^{\alpha\delta/2} m(\Omega - Y_{\varepsilon,\delta}) \le \varepsilon^{\alpha\delta}
$
giving the desired estimate.

Now, for $x_0\in Y_{\varepsilon,\delta}$ by \cref{lem:L2 to solving equation} we have
\[
m(\{x\in \X \mid (x,x_0) \in Z_{\varepsilon,\delta}\}) \ll \varepsilon^{\alpha \delta/2}.
\]
Therefore, 
\[
m(Z_{\varepsilon,\delta}) \le 
m(\Omega - Y_{\varepsilon,\delta})m(\X) + m(Y_{\varepsilon,\delta}) \varepsilon^{\alpha \delta/2} \ll \varepsilon ^{\alpha \delta/2}.
\]
Using the last estimate we get 
\[
\sum_j m(Z_{\varepsilon_j,\delta})< \infty,
\]
as needed.
\end{proof}

We now discuss a further reduction, which allows us prove bounds of $\kappa$ but with weaker assumptions than that of \cref{lem:kappa from L2 local} and \cref{lem:kappa from L2 global}.
First, we will need the following estimates, which play a major role in the work \cite{ghosh2018best}, and which we already discussed in \cref{prop: GGN spectral gap} using different notations.

\begin{lemma}\label{lem:spectral gap}
For all $n\ge 2$ there is an $\alpha>0$ such that as an operator on $L^2_0(\X)$
\[\|T^*(p^l)\|_{\mathrm{op}}\ll p^{-l\alpha}.\]
Moreover, for $n\ge 3$ any $\alpha<1/2$ and for $n=2$ (resp. under the GRC) any $\alpha<25/64$ (resp. $\alpha<1/2$) work.
\end{lemma}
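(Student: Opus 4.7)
The plan is to diagonalize the Hecke operator via Langlands' spectral decomposition and to bound each eigenvalue by the estimates from \cref{sec:local preliminaries}. Using the local--global identifications in \cref{lem: local-global space equivalence} and \cref{global-local-hecke}, (a suitable lift of) $T^*(p^l)$ on $L^2_0$ is identified with the adelic convolution operator $R(h_{p^l})$, where $h_{p^l}$ is as in \cref{defn-hpl}. Since $R(h_{p^l})$ acts by the scalar $\tilde h_{p^l}(\mu_{\cdot,p})$ on each irreducible discrete component and on each Eisenstein fiber --- $\mu_{\cdot,p}$ being the local Langlands parameter at $p$ --- we obtain
\[
\|T^*(p^l)\|_{\mathrm{op}}=\sup|\tilde h_{p^l}(\mu_{\cdot,p})|,
\]
the supremum running over the spectrum of $L^2_0$.

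Next, the key bound of \cref{lem:bounds on lambda from theta}, $|\tilde h_{p^l}(\mu)|\ll_\delta p^{l(\theta(\mu)-(n-1)/2+\delta)}$, reduces everything to estimating
\[
\theta_0:=\sup\theta(\mu_{\cdot,p})
\]
over the same spectrum. For $n=2$, $L^2_0$ decomposes as a cuspidal part plus a unitary principal-series Eisenstein part, the latter being tempered and contributing $\theta=0$; Kim--Sarnak's bound (\cref{bound-towards-ramanujan}) gives $\theta_0\le 7/64$, whence $\alpha<1/2-7/64=25/64$, and the GRC upgrades this to $\theta_0=0$ and $\alpha<1/2$. For $n\ge 3$, one combines \cref{eq:theta of Eis} for Eisenstein parameters with \cref{lem:residual theta bound} for residual parameters and the Jacquet--Shalika / Luo--Rudnick--Sarnak bound $\theta_a<1/2$ on cuspidal spectra of the smaller $\GL(a)$'s; a direct check over all residual shapes $(a,b)$ with $ab=n$ and $a\ge 2$ yields $(b-1)/2+\theta_a\le(n-2)/2$, so $\theta_0\le(n-2)/2$, giving any $\alpha<1/2$. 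This is equivalent to Oh's explicit property~$(T)$ estimate from \cite{oh2002uniform}, and it is also the content of \cref{prop: GGN spectral gap} re-expressed in terms of $\theta$.

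The main obstacle is verifying the uniform bound $\theta_0\le(n-2)/2$ on $L^2_0$ for $n\ge 3$, simultaneously treating the cuspidal, residual and Eisenstein contributions. Residual Speh-type representations coming from the shape $(1,n)$ would saturate $\theta=(n-1)/2$, but that shape produces only the trivial representation, which is excluded from $L^2_0$ --- this exclusion is precisely what opens the gap $\alpha>0$. Every remaining shape $(a,b)$ with $a\ge 2$ is controlled by $\theta_a<1/2$ on cuspidal $\GL(a)$, so the shift $(b-1)/2$ keeps $\theta$ strictly below $(n-1)/2$ by a margin of at least $1/2$, which is exactly the exponent appearing in the statement.
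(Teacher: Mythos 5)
Your approach is correct in substance, and it is essentially the same estimate the paper is appealing to, just unpacked. The paper's proof is a one-line citation: it identifies $T^*(p^l)$ with an averaging operator for $\PGL_n(\Q_p)$ acting on $L^2(\PGL_n(\Q)\backslash\PGL_n(\bbA))$ and invokes \cref{prop: GGN spectral gap} together with Oh's explicit property~$(T)$ bound $q = 2(n-1)$ for $n\ge 3$ and the Kim--Sarnak bound for $n=2$. What you do is replace the appeal to the integrability exponent $q$ with an explicit run through the Langlands spectral decomposition and a uniform bound on $\theta(\mu_{\cdot,p})$, which, as you note yourself, is precisely what the property~$(T)$ statement encodes (via the relation $q(\mu)\ge 2(n-1)/(n-1-2\theta(\mu))$ in the remark after \cref{lem:upper bound spherical functions}). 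Both roads lead to $\alpha<(n-1)/2-\theta_0$ with the same $\theta_0$, so the proposal is a legitimate reformulation rather than a genuinely different proof.

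Two points where your write-up is imprecise and should be tightened. First, when you argue $\theta_0\le(n-2)/2$ for $n\ge 3$ you only spell out the check for \emph{residual} shapes $(a,b)$ with $ab=n$ and $a\ge 2$, but you never verify the Eisenstein shapes $((a_1,b_1),\dots,(a_r,b_r))$ with $r\ge 2$. These are not a harmless omission: the shape $((1,n-1),(1,1))$ --- the degenerate Eisenstein series induced from the trivial representation on the Levi of the $(n-1,1)$-parabolic --- gives $\theta = (n-2)/2$ exactly, and it is precisely this that makes $\alpha<1/2$ sharp (as noted in the remark after the lemma, citing \cite[Theorem~1.5]{clozel2001hecke}). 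A correct justification has to include the constraint $b_i\le n-1$ when $r\ge 2$ (from $\sum a_ib_i = n$) so that $(b_i-1)/2\le(n-2)/2$ when $a_i=1$, together with the check you did do for blocks with $a_i\ge 2$. Second, the Langlands spectral decomposition in \cref{eq: Spectral decomposition} is written for $L^2(\X_\bbA)\cong L^2(\X_0)$, not $L^2(\X)$; to apply your argument to $L^2_0(\X)$ you should pass to the identification $\X\cong\PGL_n(\Z)\backslash\PGL_n(\R)/\PSO_n(\R)$ from \cref{rem:hecke on X} and observe that the bound $|\tilde h_{p^l}(\mu_{\pi,p})|\ll_\delta p^{l(\theta(\mu_{\pi,p})-(n-1)/2+\delta)}$, together with $\theta(\mu_{\pi,p})\le(n-2)/2$, holds for \emph{all} nontrivial automorphic $\pi$ with $\pi_p$ spherical, not merely those that are spherical at $\infty$. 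Once these two points are filled in, the argument is complete.
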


\begin{proof}
Using \cref{rem:hecke on X}, the proof follows from bounds on the integrability exponents of the action of $\PGL_n(\Q_p)$ on $L^2(\PGL_n(\Q) \backslash \PGL_n(\bbA))$, as in \cref{prop: GGN spectral gap} and the discussions after it. 
\end{proof}

Some remarks are in order now.
\begin{enumerate}
    \item By combining \cref{lem:kappa from L2 local} and \cref{lem:spectral gap} we may deduce \cref{thm:GGN}. Indeed, we essentially recovered the arguments in \cite{ghosh2018best} for our specific case.
    \item For $n\ge 3$, by \cite[Theorem~1.5]{clozel2001hecke}, \cref{lem:spectral gap} is optimal in the sense that for every $\delta>0$ there exists $f\in L^2_0(\X)$, with 
    \[
    \|T^*(p^l) f\|_2 \gg_{\delta} p^{-l(1/2+\delta)}\|f\|_2.
    \]
    This shows that to prove \cref{thm:kappa theorem intro} one needs stronger tools than spectral gap alone.
\end{enumerate}

\cref{lem:spectral gap} allows us to give the following versions of \cref{lem:kappa from L2 local} and \cref{lem:kappa from L2 global}.

\begin{lemma}\label{lem:kappa from L2 local convex X}
Let $\beta\ge 1$. Assume that there is an $\varepsilon_0>0$ such that for every $0<\varepsilon<\varepsilon_0$ there is $k\le \beta \frac{n+2}{2n}\log_p(\varepsilon^{-1})$ such that we have
\[
\|T^*(p^{nk})K_{\varepsilon,x_0}^{\X}\|_2 \ll_\eta \varepsilon^{-\eta},
\]
for every $\eta>0$.
Then $\kappa(x_0)\le \beta$.
\end{lemma}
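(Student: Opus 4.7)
The plan is to amplify the polynomial growth bound in the hypothesis into the genuine $L^2$-convergence to $\pi_\X$ required by \cref{lem:kappa from L2 local}, by composing with one additional short Hecke operator and invoking the spectral gap of \cref{lem:spectral gap}. The key observation is that $T^*(p^l)$ preserves integrals (as the adjoint of $T(p^l)$, which preserves constants), so it preserves the codimension-one subspace $L^2_0(\X)$; by \cref{lem:spectral gap} its operator norm on $L^2_0(\X)$ decays like $p^{-l\alpha_0}$ for some fixed $\alpha_0 > 0$.

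Given $\delta > 0$, we will fix $k = k(\varepsilon)$ as in the hypothesis and decompose
\[ T^*(p^{nk})K_{\varepsilon,x_0}^{\X} = \pi_\X + f_\varepsilon, \qquad f_\varepsilon \in L^2_0(\X), \]
which is legitimate because $\int_\X K_{\varepsilon,x_0}^{\X} = 1$. Orthogonality yields $\|f_\varepsilon\|_2 \le \|T^*(p^{nk})K_{\varepsilon,x_0}^{\X}\|_2 \ll_\eta \varepsilon^{-\eta}$ for every $\eta > 0$. Setting $k_2 := \lfloor \delta \beta \tfrac{n+2}{2n}\log_p(\varepsilon^{-1})\rfloor$, the commutativity of the Hecke algebra together with $T^*(p^{nk_2})\pi_\X = \pi_\X$ give
\[ T^*(p^{nk_2})T^*(p^{nk})K_{\varepsilon,x_0}^{\X} - \pi_\X = T^*(p^{nk_2}) f_\varepsilon, \]
and \cref{lem:spectral gap} produces
\[ \|T^*(p^{nk_2})f_\varepsilon\|_2 \ll p^{-nk_2\alpha_0}\|f_\varepsilon\|_2 \ll_\eta \varepsilon^{\delta\beta(n+2)\alpha_0/2 - \eta}. \]
Taking $\eta := \delta\beta(n+2)\alpha_0/4$ and absorbing the resulting $\delta$-dependent constant into $\varepsilon_0(\delta)$, one obtains a bound of $\varepsilon^{\alpha\delta}$ with $\alpha := \beta(n+2)\alpha_0/8 > 0$ independent of $\delta$. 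Since $k + k_2 \le (1+\delta)\beta\tfrac{n+2}{2n}\log_p(\varepsilon^{-1})$, the hypotheses of \cref{lem:kappa from L2 local} are satisfied with $(k_1, k_2) = (k, k_2)$, and we conclude $\kappa(x_0) \le \beta$.

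There is no substantial obstacle in this argument. The only delicate point is that the exponent $\alpha$ must come out uniform in $\delta$, which it does precisely because the extra Hecke budget $k_2$ is itself linear in $\delta$ and the spectral gap converts it into a matching power of $\varepsilon$.
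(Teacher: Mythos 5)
Your proof is correct and takes essentially the same route as the paper: decompose $T^*(p^{nk})K_{\varepsilon,x_0}^{\X} - \pi_\X \in L^2_0(\X)$, spend an extra Hecke budget of size $\delta\beta\frac{n+2}{2n}\log_p(\varepsilon^{-1})$, and let the spectral gap (\cref{lem:spectral gap}) convert it into a uniform power $\varepsilon^{\alpha\delta}$, then invoke \cref{lem:kappa from L2 local}. The only cosmetic differences are that you bound $\|f_\varepsilon\|_2$ via orthogonality where the paper implicitly uses the triangle inequality, and you invoke commutativity of the Hecke algebra to reorder the two operators (the paper simply applies the short operator on the outside from the start), neither of which is a substantive change.
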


\begin{proof}
By the assumption, there is an $\varepsilon_0$ such that for $\varepsilon<\varepsilon_0$ and for some $k_2 \le\beta\frac{n+2}{2n}\log_p(\varepsilon^{-1})$, it holds that 
\[
\|T^*(p^{nk_2})K_{\varepsilon,x_0}^{\X}\|_2 \ll_\eta \varepsilon^{-\eta}.
\]
Since $T^*(p^{nk_2})$ is an average operator and $\intop_\X K_{\varepsilon,x_0}^{\X}(x) \d x =1$ we have
\[T^*(p^{nk_2})K_{\varepsilon,x_0}^{\X}-\pi_\X \in L^2_0(\X).\]
Let $\delta>0$. Let $k_1 = \lfloor \beta \delta\frac{n+2}{2n}\log_p(\varepsilon^{-1})\rfloor$. Notice that $k_1+k_2 \le \beta (1+\delta)\frac{n+2}{2n}\log_p(\varepsilon^{-1})$. 

Applying Lemma~\Ref{lem:spectral gap} we find some $\alpha>0$ such that
\begin{align*}
\| T^*(p^{nk_1})T^*(p^{nk_2})K_{\varepsilon,x_0}^{\X} -\pi_\X\|_2 &= \| T^*(p^{nk_1})(T^*(p^{nk_2})K_{\varepsilon,x_0}^{\X} -\pi_\X)\|_2 \\
&\ll \varepsilon^{\alpha\delta}\|T^*(p^{nk_2})K_{\varepsilon,x_0}^{\X} -\pi_\X\|_2 \ll_\eta \varepsilon^{\alpha\delta-\eta}.
\end{align*}
By choosing $\varepsilon_0^\prime,\eta$ small enough, 
for $\varepsilon<\varepsilon_0^\prime$ the above is $\le \varepsilon^{\alpha\delta/2}$. The lemma now follows from \cref{lem:kappa from L2 local}.
\end{proof}

Our final reduction will allow us to replace the space $\X$ by the nicer space $\X_0$.
For $x_0\in \X_0$, let $K_{\varepsilon,x_0}^{\X_0}\in C_c^\infty(\X_0)$ be
\[
K_{\varepsilon,x_0}^{\X_0}(y) :=\sum_{\gamma \in \PGL_n(\Z)}k_\varepsilon(x_0^{-1}\gamma^{-1}y).
\]
Let $\Phi\colon \X \to \X_0$ be the covering map. Then we can define a push-forward map $\Phi_*\colon L^2(\X) \to L^2(\X_0)$, defined for $f\in L^2(\X)$, $y\in \X$ as 
\[
\Phi_*(f)(\Phi(y)) := \sum_{\gamma \in \PGL_n(\Z)/\PSL_n(\Z)} f(\gamma y).
\]
We have the simple norm estimate on push-forward maps for \emph{non-negative} $f$,
\begin{equation}\label{eq:push-forward}
    \|f\|_{2}\,\, \le \|\Phi_*f\|_{2},
\end{equation}
where on the left-hand side the norm on $L^2(\X)$ and on the right-hand side the norm is on $L^2(\X_0)$.
\begin{lemma}\label{lem:X and X_0 comparison}
Let $x_0 \in \X$. Then it holds that 
\[
K_{\varepsilon,\Phi(x_0)}^{\X_0} = \Phi_* K_{\varepsilon,x_0}^{\X}
\]
and similarly
\[
T^*(p^{nk})K_{\varepsilon,\Phi(x_0)}^{\X_0} = \Phi_*( T^*(p^{nk})K_{\varepsilon,x_0}^{\X})
\]
\end{lemma}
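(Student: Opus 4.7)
The plan is to verify both identities directly from the definitions, making use of the normality of $\PSL_n(\Z)$ in $\PGL_n(\Z)$, so that $Q := \PGL_n(\Z)/\PSL_n(\Z)$ is a finite abelian group of order at most two. For the first identity, I would unfold
$K^{\X_0}_{\varepsilon, \Phi(x_0)}(y) = \sum_{\delta \in \PGL_n(\Z)} k_\varepsilon(x_0^{-1}\delta^{-1} y)$, write each $\delta$ uniquely as $\delta = \sigma q$ with $\sigma \in \PSL_n(\Z)$ and $q$ a chosen representative of a class in $Q$, and rearrange using the normality identity $q^{-1}\PSL_n(\Z) = \PSL_n(\Z) q^{-1}$. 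This produces $\sum_{q \in Q} K^\X_{\varepsilon, x_0}(q^{-1} y)$, which coincides with $\Phi_* K^\X_{\varepsilon, x_0}(\Phi(y)) = \sum_{q \in Q} K^\X_{\varepsilon, x_0}(q y)$ after relabelling $q \leftrightarrow q^{-1}$ on $Q$.

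For the second identity, I apply the first identity with $y$ replaced by $\mu^{-1} y$ to rewrite
$T^*(p^{nk}) K^{\X_0}_{\varepsilon, \Phi(x_0)}(\Phi(y)) = |A(p^{nk})|^{-1} \sum_\mu \sum_q K^\X_{\varepsilon, x_0}(q \mu^{-1} y)$, while direct unfolding of the right-hand side of the claim gives $\Phi_*(T^*(p^{nk}) K^\X_{\varepsilon, x_0})(\Phi(y)) = |A(p^{nk})|^{-1} \sum_q \sum_\mu K^\X_{\varepsilon, x_0}(\mu^{-1} q y)$. Hence the second identity is equivalent to the commutativity $T^*(p^{nk})\circ \Phi_* = \Phi_* \circ T^*(p^{nk})$ of the Hecke operator with the push-forward.

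The cleanest way to establish this commutativity is to pass to the adelic picture: by \cref{lem: local-global space equivalence} and \cref{rem:local-global X} we have $\X_0 \cong \PGL_n(\Q)\backslash \PGL_n(\bbA)/K_\bbA$ and $\X \cong \PGL_n(\Q)\backslash \PGL_n(\bbA)/(\PSO_n(\R)\times\prod_p K_p)$, so $\Phi_*$ is summation over the finite quotient $K_\infty/\PSO_n(\R) \cong Q$, an operation localized at the archimedean place. By \cref{global-local-hecke}, $T^*(p^{nk})$ is right convolution by $h_{p^{nk}} \in C_c^\infty(K_p\backslash \PGL_n(\Q_p)/K_p)$, which is localized at the place $p$. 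Right convolutions supported at distinct places tautologically commute, yielding the claim. The main (essentially only) nuisance is matching the classical and adelic conventions for left versus right actions and for the choice of representatives $A(p^{nk})$; no deeper input is required. Alternatively one may argue purely classically, using the two-sided $\PGL_n(\Z)$-invariance of $\PGL_n(\Z)\tilde R(p^{nk})$ to exhibit, for each $(\mu,q)\in A(p^{nk})\times Q$, a unique $(\mu',q')\in A(p^{nk})\times Q$ with $q\mu^{-1} \in \PSL_n(\Z)(\mu')^{-1} q'$, so that left $\PSL_n(\Z)$-invariance of $K^\X_{\varepsilon, x_0}$ matches the two double sums term by term.
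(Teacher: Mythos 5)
Your proof is correct. The paper's own proof is a one-liner: it unwinds $T^*(p^{nk})K^{\X}_{\varepsilon,x_0}$ and $T^*(p^{nk})K^{\X_0}_{\varepsilon,\Phi(x_0)}$ directly to the sums $\sum_{\gamma\in\tilde R(p^{nk})}k_\varepsilon(x_0^{-1}\gamma^{-1}y)$ and $\sum_{\gamma'\in\PGL_n(\Z)/\PSL_n(\Z)}\sum_{\gamma\in\tilde R(p^{nk})}k_\varepsilon(x_0^{-1}\gamma^{-1}\gamma' y)$, and declares that the lemma follows; it also runs the implication in the opposite order from you, proving the second identity (from which the first is the $k=0$ case). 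You instead establish the first identity and then reduce the second to the commutativity $T^*(p^{nk})\circ\Phi_* = \Phi_*\circ T^*(p^{nk})$, giving this commutativity both an adelic proof (both operators are right convolutions by bi-$K_v$-invariant kernels supported at the distinct places $v=p$ and $v=\infty$, so they commute tautologically) and a classical one via bi-$\PGL_n(\Z)$-invariance of $\PGL_n(\Z)\tilde R(p^{nk})$. This factorization is a genuine improvement in exposition: the paper's single unwinding quietly absorbs exactly the reparameterization that your commutativity step makes explicit, and your adelic framing explains \emph{why} the swap is legitimate rather than just performing it. The only caveat I would flag is that your adelic argument should note that $\Phi_*$ becomes (up to a normalizing constant) right convolution by $\mathds{1}_{K_\infty}$ only after observing that the input is already right $\PSO_n(\R)$-invariant, and that $K_\infty/\PSO_n(\R)\cong\PGL_n(\Z)/\PSL_n(\Z)$ for all $n$ (both are $\Z/2$ when $n$ is even and both are trivial when $n$ is odd); you implicitly use this but it's worth a sentence.
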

\begin{proof}
It is sufficient to prove the second estimate. Indeed, unwinding the definitions we get that 
\[
T^*(p^{nk})K_{\varepsilon,x_0}^{\X}(y) = \sum_{\gamma \in \tilde{R}(p^{nk})}k_\varepsilon(x_0^{-1}\gamma^{-1}y),
\]
and similarly,
\[
T^*(p^{nk})K_{\varepsilon,x_0}^{\X_0}(y) = \sum_{\gamma'\in\PGL_n(\Z)/\PSL_n(\Z)}\sum_{\gamma \in \tilde{R}(p^{nk})}k_\varepsilon(x_0^{-1}\gamma^{-1}\gamma'y).
\]
The lemma follows.
\end{proof}

Finally, combining \cref{lem:kappa from L2 local convex X}, \cref{lem:X and X_0 comparison}, and \cref{eq:push-forward}, we deduce the following.

\begin{lemma}\label{lem:kappa from L2 local convex}
Let $\beta\ge 1$ and $x_0\in\X$. Assume that there is an $\varepsilon_0>0$ such that for every $0<\varepsilon<\varepsilon_0$ and for some $k\le \beta \frac{n+2}{2n}\log_p(\varepsilon^{-1})$ we have
\[
\|T^*(p^{nk})K_{\varepsilon,\Phi(x_0)}^{\X_0}\|_2 \ll_\eta \varepsilon^{-\eta},
\]
for every $\eta>0$.
Then $\kappa(x_0)\le \beta$.
\end{lemma}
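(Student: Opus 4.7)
The plan is to deduce this lemma as a straightforward corollary of three previously established results: \cref{lem:X and X_0 comparison}, the push-forward inequality \eqref{eq:push-forward}, and \cref{lem:kappa from L2 local convex X}. Conceptually, the statement is a transfer principle that moves the hypothesis from the quotient $\X_0=\PGL_n(\Z)\backslash\H^n$ up to the double cover $\X=\SL_n(\Z)\backslash\H^n$, where the earlier version of the criterion is available.

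Here is how I would carry it out, step by step. First, I would apply \cref{lem:X and X_0 comparison} to rewrite
\[
T^*(p^{nk}) K_{\varepsilon,\Phi(x_0)}^{\X_0} \;=\; \Phi_*\bigl(T^*(p^{nk}) K_{\varepsilon,x_0}^{\X}\bigr),
\]
so that the hypothesis of the present lemma is really a bound on the $L^2(\X_0)$-norm of a push-forward of the object $f := T^*(p^{nk}) K_{\varepsilon,x_0}^{\X}$ living on $\X$. Second, I would verify that $f$ is pointwise non-negative on $\X$. Since $T^*(p^{nk})$ is a finite average over the coset representatives in $A(p^{nk})$ with non-negative weights, this reduces to the non-negativity of the smoothed kernel $k_\varepsilon$ produced by \cref{lem:k_0 conditions}, which one can arrange by choosing the underlying bump $h$ on $\mathfrak{a}$ appropriately (for instance as a self-convolution of a non-negative $W$-invariant bump) without disturbing any of the five properties listed there. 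With $f\ge 0$ in hand, the push-forward inequality \eqref{eq:push-forward} yields
\[
\|T^*(p^{nk}) K_{\varepsilon,x_0}^{\X}\|_{L^2(\X)} \;\le\; \|\Phi_*f\|_{L^2(\X_0)} \;=\; \|T^*(p^{nk}) K_{\varepsilon,\Phi(x_0)}^{\X_0}\|_{L^2(\X_0)} \;\ll_\eta\; \varepsilon^{-\eta}
\]
for every $\eta>0$, the last estimate being the hypothesis of the present lemma.

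To close the proof, I would note that the height bound $k\le \beta\tfrac{n+2}{2n}\log_p(\varepsilon^{-1})$ is preserved throughout, so the hypothesis of \cref{lem:kappa from L2 local convex X} is satisfied at the point $x_0\in \X$, and invoking that lemma produces the desired conclusion $\kappa(x_0)\le \beta$.

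The argument is essentially a formal chaining of prior lemmas; the only non-bookkeeping step, and therefore the main (very mild) obstacle, is confirming the pointwise non-negativity of $f$. That is the sole place where one has to look back into the construction of $k_\varepsilon$ rather than use its properties as a black box, which is why the inequality \eqref{eq:push-forward} with its ``non-negative'' hypothesis is the conceptual crux of the transfer.
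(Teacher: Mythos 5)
Your overall strategy is exactly the paper's: the paper derives this lemma in a single sentence by ``combining \cref{lem:kappa from L2 local convex X}, \cref{lem:X and X_0 comparison}, and \cref{eq:push-forward},'' and your proposal unwinds that combination correctly, including the key observation that \cref{eq:push-forward} is only valid for non-negative $f$, so that one needs $T^*(p^{nk})K^{\X}_{\varepsilon,x_0}\ge 0$, i.e.\ $k_\varepsilon\ge 0$.

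The gap is in the parenthetical argument you give for arranging $k_\varepsilon\ge 0$. Taking the bump $h$ on $\a$ to be a self-convolution $h_0*h_0^\ast$ of a non-negative $W$-invariant bump makes $\widetilde{k}_\varepsilon\ge 0$ on $i\a^*$, but it does \emph{not} make $k_\varepsilon=C_\varepsilon^{-1}\calS^{-1}(h_\varepsilon)$ pointwise non-negative on $G$. The inverse Abel--Satake transform $\calS^{-1}$ is not positivity-preserving; and since $\calS$ is a ring homomorphism for convolution, the effect of your choice is that $k_\varepsilon$ becomes a self-convolution $k_0*k_0^\ast$ on $G$ with $k_0=\calS^{-1}$ of a non-negative bump. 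Such a $k_0$ need not be non-negative, and the autocorrelation $k_0*k_0^\ast$ of a sign-changing function can itself change sign (positive-definiteness of $k_\varepsilon$ is not pointwise positivity). This matters: for $n$ even, $\X\to\X_0$ is a genuine $2\colon\!1$ cover, $L^2(\X)\cong L^2(\X_0)\oplus L^2(\X_0,\mathrm{sgn})$, and without a sign condition a bound on the $L^2(\X_0)$-piece alone does not control the $L^2(\X)$-norm, so \cref{eq:push-forward} really does require the non-negativity hypothesis. In fairness, the paper's own construction of $k_\varepsilon$ in \cref{lem:k_0 conditions} (inverse Abel--Satake transform of a non-negative $h$) does not visibly yield $k_\varepsilon\ge 0$ either, so to make this step airtight one would need either to construct $k_\varepsilon$ directly as a manifestly non-negative bi-$K$-invariant bump and re-derive properties $(4)$ and $(5)$ of \cref{lem:k_0 conditions} from the Paley--Wiener bound on $\calS(k_\varepsilon)$, or to find a route around \cref{eq:push-forward}; your proposed modification of $h$ on $\a$ does not achieve this.
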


The same set of arguments, with \cref{lem:kappa from L2 global} in place of \cref{lem:kappa from L2 local} will give:
\begin{lemma}\label{lem:kappa from L2 global convex}
Let $\Omega \subset \X_0$ be a compact set of positive measure. Let $\beta\ge 1$. Assume that for every $\delta>0$ there is $\varepsilon_0>0$ such that for every $0<\varepsilon<\varepsilon_0$ there is $k\le (1+\delta)\beta \frac{n+2}{2n}\log_p(\varepsilon^{-1})$ such that
\[
\intop_\Omega \|T^*(p^{nk})K_{\varepsilon,x_0}^{\X_0}\|_2^2 \d x_0 \ll_\eta \varepsilon^{-\eta},
\]
for every $\eta>0$.
Then $\kappa\le \beta$.
\end{lemma}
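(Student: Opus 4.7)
The plan is to follow the template of \cref{lem:kappa from L2 local convex}, but averaging over $x_0 \in \Omega$ and invoking \cref{lem:kappa from L2 global} in place of \cref{lem:kappa from L2 local convex X}. Three ingredients will be combined: the push-forward comparison \eqref{eq:push-forward} together with \cref{lem:X and X_0 comparison} to move between $\X_0$ and $\X$, the spectral gap \cref{lem:spectral gap} to insert a second short Hecke operator at small cost, and \cref{lem:kappa from L2 global} to translate the resulting spectral estimate into a bound on $\kappa$.

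Given $\delta > 0$, I would first lift $\Omega$ to $\tilde\Omega := \Phi^{-1}(\Omega) \subset \X$, which is compact of positive measure since $\Phi$ is a finite cover of degree two. Because the kernel $T^*(p^{nk})K^{\X}_{\varepsilon,x_0}$ is non-negative, \eqref{eq:push-forward} combined with \cref{lem:X and X_0 comparison} gives the pointwise comparison $\|T^*(p^{nk})K^{\X}_{\varepsilon,x_0}\|_{L^2(\X)} \le \|T^*(p^{nk})K^{\X_0}_{\varepsilon,\Phi(x_0)}\|_{L^2(\X_0)}$. Integrating over $\tilde\Omega$ and using that $\Phi$ is a 2-to-1 cover to change variables, the hypothesis transfers to
\[
\intop_{\tilde\Omega} \|T^*(p^{nk})K^{\X}_{\varepsilon,x_0}\|_{L^2(\X)}^2 \d x_0 \ll_\eta \varepsilon^{-\eta},
\]
for every $\eta > 0$, where $k \le (1+\delta/2)\beta \tfrac{n+2}{2n}\log_p(\varepsilon^{-1})$ is supplied by the assumption (applied with $\delta/2$ in place of $\delta$).

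Next, I would introduce an auxiliary short Hecke operator. Set $k_1 := \lfloor (\delta/2) \beta \tfrac{n+2}{2n}\log_p(\varepsilon^{-1}) \rfloor$ and $f_{x_0} := T^*(p^{nk})K^{\X}_{\varepsilon,x_0} - \pi_\X$, which lies in $L^2_0(\X)$ because $T^*(p^{nk})$ preserves the integral and $\intop_\X K^{\X}_{\varepsilon,x_0} \d x = 1$. Applying \cref{lem:spectral gap} with its fixed constant $\alpha_0 > 0$,
\[
\|T^*(p^{nk_1}) f_{x_0}\|_2^2 \ll p^{-2n k_1 \alpha_0}\|f_{x_0}\|_2^2 \ll \varepsilon^{c\delta}\|f_{x_0}\|_2^2, \qquad c := \tfrac{\alpha_0\beta(n+2)}{2}.
\]
Combining with the trivial bound $\|f_{x_0}\|_2 \le \|T^*(p^{nk})K^{\X}_{\varepsilon,x_0}\|_2 + m(\X)^{-1/2}$ and integrating over $\tilde\Omega$ using the display above, I would obtain
\[
\intop_{\tilde\Omega} \|T^*(p^{nk_1})T^*(p^{nk})K^{\X}_{\varepsilon,x_0} - \pi_\X\|_2^2 \d x_0 \ll_\eta \varepsilon^{c\delta - \eta}.
\]
Choosing $\eta := c\delta/2$, the right-hand side becomes $\ll \varepsilon^{c\delta/2}$.

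Finally, since $k_1 + k \le (1+\delta)\beta \tfrac{n+2}{2n}\log_p(\varepsilon^{-1})$, I would apply \cref{lem:kappa from L2 global} to $\tilde\Omega \subset \X$ with the fixed exponent $\alpha := c/2$ (which depends only on $n$ and $\beta$, not on $\delta$). As $\delta > 0$ is arbitrary, this yields $\kappa \le \beta$. The argument is essentially bookkeeping once the preparatory lemmas are assembled; the only mildly delicate step is the push-forward comparison of step one, which crucially exploits the non-negativity of $K^{\X}_{\varepsilon,x_0}$ to pass from the $\X_0$-bound in the hypothesis to the $\X$-bound required by \cref{lem:kappa from L2 global}. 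No genuine obstacle arises.
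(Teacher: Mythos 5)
Your proof is correct and is precisely the paper's intended argument: the paper's own proof of \cref{lem:kappa from L2 global convex} is a single remark that one repeats the derivation of \cref{lem:kappa from L2 local convex} (combining \cref{lem:kappa from L2 local convex X}, \cref{lem:X and X_0 comparison}, and \cref{eq:push-forward}) with \cref{lem:kappa from L2 global} substituted for \cref{lem:kappa from L2 local}, and you carry out exactly that substitution in full detail — lifting $\Omega$ to $\tilde\Omega=\Phi^{-1}(\Omega)$, transferring the hypothesis via the push-forward inequality, inserting the short auxiliary operator $T^*(p^{nk_1})$ via \cref{lem:spectral gap}, and concluding by \cref{lem:kappa from L2 global} on $\tilde\Omega\subset\X$. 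Two cosmetic remarks: the degree of the cover $\Phi$ is $1$ or $2$ depending on the parity of $n$ (not always $2$), which is immaterial here, and absorbing the implied $\ll$-constant into a slightly smaller fixed $\alpha$ by shrinking $\varepsilon_0$ is the same device the paper uses in the proof of \cref{lem:kappa from L2 local convex X}, so no further justification is needed.
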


\section{Applying the Spectral Decomposition}
\label{sec:applying spectral decomposition}

Consider the adelic function $f \in C_c^\infty(K_\bbA \backslash \PGL_n(\bbA) / K_\bbA)$ defined by
\[f((g)_v):= k_\varepsilon(g_\infty)
h_{p^{nk}}(g_p)\prod_{q\ne p} \One_{K_q}(g_q),
\]
where $h_{p^{nk}}$ is as in \cref{defn-hpl} and $k_\varepsilon$ is given by \cref{lem:k_0 conditions}.

Given $x_0 \in \X_0$, identify it by a slight abuse of notations as an element $x_0\in \X_\bbA$. 
Consider the function 
\[
F_{x_0}(x) = \sum_{\gamma \in \PGL_n(\Q)} f_1(x_0^{-1} \gamma^{-1} x),
\]
where $f_1$ is the self-convolution of $f$, as defined in the proof of \cref{prop: spectral decomposition 2}.
Using the discussion in \cref{subsec:adelic formulation}, we see that $F_{x_0}$ is the adelic version of the function $T^*(p^{nk})K_{\varepsilon,x_0}^{\X_0}$.  
Therefore,
\[
\|T^*(p^{nk})K_{\varepsilon,x_0}^{\X_0}\|_2^2 =
\|F_{x_0}\|_2^2,
\]
where the underlying space on the left-hand side is $\X_0$ and on the right-hand side is $\X_\bbA$.

We apply \cref{prop: spectral decomposition 2} to $F_{x_0}$, and obtain that 
\begin{align}\label{eq:basic spectral decomposition}
\|T^*(p^{nk})K_{\varepsilon,x_0}^{\X_0}\|_2^2 \,\,=& \sum_{P} C_P \sum_{\varphi\in \calB_P}\intop_{i\a_P^*} |\tilde{k}_\varepsilon(\mu_{\varphi,\lambda,\infty})|^2|\tilde{h}_{p^{nk}}(\mu_{\varphi,\lambda,p})|^2 |\Eis_P(\varphi,\lambda)(x_0)|^2 \d \lambda \\   
=& \sum_{\varphi\in \calB_G} |\tilde{k}_\varepsilon(\mu_{\varphi,\infty})|^2|\tilde{h}_{p^{nk}}(\mu_{\varphi,p})|^2 |\varphi(x_0)|^2  \nonumber \\
+&\sum_{P\ne G} C_P \sum_{\varphi\in \calB_P}\intop_{i\a_P^*} |\tilde{k}_\varepsilon(\mu_{\varphi,\lambda,\infty})|^2|\tilde{h}_{p^{nk}}(\mu_{\varphi,\lambda,p})|^2 |\Eis_P(\varphi,\lambda)(x_0)|^2 \d \lambda \nonumber
\end{align}
Using \cref{lem:k_0 conditions} we get that for every $N> 0$, 
\begin{equation}\label{useful-bound-tilde-k}
|\tilde{k}_\varepsilon(\mu_{\varphi,\lambda,\infty})| \ll_N (1+\varepsilon\nu_{\varphi,\lambda})^{-N} \ll  (1+\varepsilon\nu_{\varphi})^{-N/2}(1+\varepsilon\|\lambda\|)^{-N/2},\quad |\tilde{k}_\varepsilon(\mu_{\varphi,\infty})| \ll_N (1+\varepsilon\nu_{\varphi})^{-N}.
\end{equation} 
Also using \cref{lem:bounds on lambda from theta} we get that for every $\eta>0$,
\begin{equation}\label{useful-bound-h}
|\tilde{h}_{p^{nk}}(\mu_{\varphi,\lambda,p})| \ll_\eta p^{kn(\theta_{\varphi,p}-(n-1)/2+\eta)},\quad |\tilde{h}_{p^{nk}}(\mu_{\varphi,p})| \asymp p^{-kn(n-1)/2}|\lambda_\varphi(p^{nk})|.
\end{equation}
Thus we arrive at the following proposition.

\begin{prop}[Truncation]\label{prop:truncation local}
Let $\Omega\subset \X_0$ be a fixed compact set. Then for every $x_0\in \Omega$ and $\delta,\eta>0$ we have
\begin{multline*}
    \|T^*(p^{nk})K_{\varepsilon,x_0}^{\X_0}\|_2^2 \,\,\ll_{\Omega,N,\eta,\delta} 
    \sum_{\varphi\in \calB_G,\nu_\varphi\le \varepsilon^{-1-\delta}} p^{-kn(n-1)}|\lambda_{\varphi}(p^{nk})|^2  |\varphi(x_0)|^2 
    \\ 
    +\sum_{P\ne G} C_P \sum_{\varphi\in \calB_P,\nu_\varphi\le \varepsilon^{-1-\delta}} p^{kn(2\theta_{\varphi,p}-(n-1)+\eta)} \intop_{\lambda \in i\a_P^*, \|\lambda\| \le \varepsilon^{-1-\delta}}  |\Eis_P(\varphi,\lambda)(x_0)|^2 \d \lambda 
    + \varepsilon^{N}
\end{multline*}
for every $N>0$.
\end{prop}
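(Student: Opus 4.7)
The plan is to apply the bounds on $\tilde{k}_\varepsilon$ and $\tilde{h}_{p^{nk}}$ from \cref{lem:k_0 conditions} and \cref{lem:bounds on lambda from theta} termwise to the spectral expansion \eqref{eq:basic spectral decomposition}, and then use the local Weyl law of \cref{prop:Weyl Law local} to show that the high-frequency tail is negligible.

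First I would apply \eqref{useful-bound-h}: for $P=G$ this directly produces the factor $p^{-kn(n-1)}|\lambda_\varphi(p^{nk})|^2$, and for $P \ne G$ the estimate is $|\tilde{h}_{p^{nk}}(\mu_{\varphi,\lambda,p})|^2 \ll_\eta p^{kn(2\theta_{\varphi,p}-(n-1)+\eta)}$. Both factors depend only on $\varphi$ and not on $\lambda$ by \eqref{eq:theta of Eis}. Next I would split each spectral integral according to whether $\nu_\varphi \le \varepsilon^{-1-\delta}$ and $\|\lambda\| \le \varepsilon^{-1-\delta}$. On this bulk region I would bound $|\tilde{k}_\varepsilon(\mu_{\varphi,\lambda,\infty})|^2 \ll 1$, which follows from property $(4)$ of \cref{lem:k_0 conditions} applied to unitary spectral parameters (which satisfy $\theta \le (n-1)/2$). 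This produces the two explicit main terms in the proposition.

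For the complementary tail region I would perform a dyadic decomposition in $\nu_\varphi$ and $\|\lambda\|$, writing $\nu_\varphi \asymp T_1$, $\|\lambda\| \asymp T_2$ with $\max(T_1,T_2) > \varepsilon^{-1-\delta}$. On each such block, property $(4)$ of \cref{lem:k_0 conditions} combined with $\nu_{\varphi,\lambda}^2 \asymp \nu_\varphi^2 + \|\lambda\|^2$ gives the rapid decay $|\tilde{k}_\varepsilon(\mu_{\varphi,\lambda,\infty})|^2 \ll_M (1+\varepsilon T_1)^{-M}(1+\varepsilon T_2)^{-M}$ for arbitrarily large $M$. The remaining spectral mass on the block, weighted by $|\Eis_P(\varphi,\lambda)(x_0)|^2$, is bounded by a fixed polynomial in $T_1+T_2$ via \cref{prop:Weyl Law local}, while the factor $p^{kn(\cdot)}$ grows only polynomially in $\varepsilon^{-1}$ in the applications (where $k = O(\log \varepsilon^{-1})$). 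Summing the dyadic blocks and taking $M$ sufficiently large in terms of $N$, $\delta$, and the dimensional constants, the total tail contribution is absorbed into $\varepsilon^N$.

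The main technical point is that the rapid decay of $\tilde{k}_\varepsilon$ away from the bulk, together with the polynomial spectral counting in \cref{prop:Weyl Law local}, has to dominate simultaneously the polynomial growth in $\nu_\varphi$ and $\|\lambda\|$, the non-tempered archimedean parameters, and the potentially large $p$-adic factor $p^{kn\eta}$ coming from $\theta_{\varphi,p} \le (n-1)/2$. This is precisely the reason one needs the \emph{local} Weyl law of \cref{prop:Weyl Law local}, rather than just Donnelly's cuspidal bound \cref{prop:Weyl law upper bound}: the Eisenstein part of the tail must be controlled pointwise at the base point $x_0$, since no a priori pointwise bound on $\Eis_P(\varphi,\lambda)(x_0)$ is available.
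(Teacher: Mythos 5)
Your overall strategy — apply \cref{lem:bounds on lambda from theta} and \cref{lem:k_0 conditions} to the spectral expansion \eqref{eq:basic spectral decomposition}, retain the bulk $\nu_\varphi,\|\lambda\|\le\varepsilon^{-1-\delta}$ as the main terms, and kill the tail with the rapid decay of $\tilde{k}_\varepsilon$ together with the local Weyl law \cref{prop:Weyl Law local} — is the same as the paper's, and your bulk analysis is fine. However, there is a genuine gap in your treatment of the tail.

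You carry the factor $|\tilde h_{p^{nk}}(\mu_{\varphi,\lambda,p})|^2 \ll_\eta p^{kn(2\theta_{\varphi,p}-(n-1)+\eta)}$ into the tail and then remark that this ``grows only polynomially in $\varepsilon^{-1}$ in the applications (where $k=O(\log\varepsilon^{-1})$).'' But the proposition's implied constant is $\ll_{\Omega,N,\eta,\delta}$, i.e.\ \emph{independent of $k$}, so the estimate must hold uniformly over all $k$. Since $\theta_{\varphi,p}$ can be as large as $(n-1)/2$, your bound gives $p^{kn\eta}$, which is unbounded as $k\to\infty$; for $k$ not tied to $\varepsilon$ your tail estimate would not be absorbed into $\varepsilon^N$. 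The relation $k\asymp\log_p\varepsilon^{-1}$ is an assumption used in the downstream applications (e.g.\ the proof of the main theorem), not a hypothesis of this proposition, so you cannot invoke it here. The paper sidesteps the issue by discarding the refined $p$-adic bound in the tail and instead using the trivial bound $|\tilde h_{p^{nk}}(\mu_{\varphi,\lambda,p})|\le 1$ from \cref{rem:trivial-bound-spherical-transform}, which is uniform in $k$ and $p$ and hence makes the tail genuinely $O_{\Omega,N,\delta}(\varepsilon^N)$. Replacing your $p^{kn(\cdot)}$ factor with $1$ in the tail (keeping the refined bound only in the bulk, where it produces the stated main terms) closes the gap and recovers the paper's argument. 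The remaining differences — your product dyadic decomposition in $(\nu_\varphi,\|\lambda\|)$ versus the paper's sequential splitting first in $\nu_\varphi$ and then in $\|\lambda\|$, and the paper's explicit invocation of \cref{prop:Weyl law upper bound} and \cref{lem:Weyl law residual} for the $\varphi$-count — are cosmetic.
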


\begin{proof}
We notice that the proposition follows from \cref{eq:basic spectral decomposition} and \cref{useful-bound-h}, if we can show that the contribution of $\varphi\in\B_P$ with $\nu_\varphi \ge \varepsilon^{-1-\delta}$, and the contribution of $\varphi\in \B_p$ for $P\neq G$ and $\lambda$ with $\varphi \le \varepsilon^{-1-\delta}$ and $\|\lambda\|\ge \varepsilon^{-1-\delta}$ are $O_{\Omega,N,\delta}(\varepsilon^{N})$.

We use the estimate 
\[|\tilde{h}_{p^{nk}}(\mu_{\varphi,\lambda,p)})| \le 1,\]
which follows from \cref{rem:trivial-bound-spherical-transform},
throughout the proof.

We first handle the contribution to \cref{eq:basic spectral decomposition} from $\varphi$ with $S\le \nu_\varphi \le 2S$, with $S\ge \varepsilon^{-1-\delta}$. Notice that in this case 
$(1+\varepsilon \nu_{\varphi}) \gg S^{\delta'}$
for some $\delta'$ depending on $\delta$.
Applying \cref{useful-bound-tilde-k} we see that the contribution of such $\varphi$ is bounded by 
\begin{align*}
\ll_{N,\delta} \sum_P \sum_{\varphi\in \calB_P,S\le \nu_\varphi\le 2S} S^{-N}\intop_{i\a_P^*} (1+\varepsilon\|\lambda\|)^{-N}|\Eis_P(\varphi,\lambda)(x_0)|^2\d\lambda.
\end{align*}
Take $N$ large enough, use \cref{prop:Weyl Law local}, and apply integration by parts. 
Then the inner integral is bounded by $\ll_\Omega \varepsilon^{-L_1}$
for some absolute $L_1$.
Making $N$ large enough the entire sum is bounded by 
\[
\ll_{\Omega,N,\delta} \sum_P \sum_{\varphi\in \calB_P,S\le \nu_\varphi\le 2S} S^{-N}.
\]
Using \cref{prop:Weyl law upper bound} and \cref{lem:Weyl law residual} we have
\[
|\{\varphi\in \calB_P,S\le \nu_\varphi\le 2S\}| \le S^{L_2}
\]
for some absolute $L_2$. 
Once again making $N$ sufficiently large the entire contribution is bounded by $\ll_{\Omega,N,\delta} S^{-N}$. Summing over $S\ge \varepsilon^{-1-\delta}$ in dyadic intervals we deduce the entire contribution from $\nu_{\varphi}\ge \varepsilon^{-1-\delta}$ is bounded by $\ll_{\Omega,N,\delta} \varepsilon^{N}$.

We next deal with the case when $\nu_\varphi \le \varepsilon^{-1-\delta}$ and $\|\lambda \|\ge \varepsilon^{-1-\delta}$. 
Applying \cref{useful-bound-tilde-k} we see that the contribution is bounded by 
\begin{align*}
\ll_N \sum_P \sum_{\varphi\in \calB_P,\nu_\varphi\le \varepsilon^{-1-\delta}} (1+\varepsilon\nu_\varphi)^{-N} \intop_{\lambda \in i\a_P^*, \|\lambda\| \ge \varepsilon^{-1-\delta}}(1+\varepsilon\|\lambda\|)^{-N} |\Eis_P(\varphi,\lambda)(x_0)|^2\d\lambda.
\end{align*}
Using \cref{prop:Weyl Law local}, applying integration by parts, 
and making $N$ large enough, the inner integral is bounded by $\ll_{\Omega,N,\delta} \varepsilon^N$.
Therefore, the entire sum is bounded by 
\[
\ll_{\Omega,N,\delta} \varepsilon^N \sum_P \sum_{\varphi\in \calB_P,\nu_\varphi\le \varepsilon^{-1-\delta}} 1.
\]
Applying \cref{prop:Weyl law upper bound} and \cref{lem:Weyl law residual} again, and making $N$ sufficiently large, this is bounded by $\ll_{\Omega,N,\delta} \varepsilon^{N}$.
\end{proof}

A similar proposition treats the averaged version.
\begin{prop}\label{prop:truncation global}
Let $\Omega\subset \X_0$ be a fixed compact set. For every $\delta,\eta>0$ we have
\begin{align*}
    \intop_\Omega \|T^*(p^{nk})K_{\varepsilon,x_0}^{\X_0}\|_2^2 \d x_0 &\ll_{\Omega,N,\eta,\delta} \sum_{\varphi\in \calB_G,\nu_\varphi\le \varepsilon^{-1-\delta}} p^{-kn(n-1)}|\lambda_{\varphi}(p^{nk})|^2 \\
    \\ 
    & + \sum_{P\ne G} C_P \sum_{\varphi\in \calB_P,\nu_\varphi\le \varepsilon^{-1-\delta}} \varepsilon^{-(1+\delta +\eta)\dim \a^*_P} p^{kn(2\theta_{\varphi,p}-(n-1)+\eta)} + \varepsilon^N,
\end{align*}
for every $N>0$.
\end{prop}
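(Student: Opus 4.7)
The plan is to obtain this averaged estimate by simply integrating the pointwise bound of Proposition~\ref{prop:truncation local} over $x_0\in\Omega$, and then treating the discrete and continuous parts separately by exploiting the unit normalization of $\varphi$ on the one hand and Proposition~\ref{thm:maass-selberg} on the other.

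First I would apply Proposition~\ref{prop:truncation local} pointwise for each $x_0\in\Omega$ and integrate against $\d x_0$. For the discrete contribution, the $x_0$-dependence is only through $|\varphi(x_0)|^2$; since each $\varphi\in\calB_G$ is a unit vector in $L^2(\X_\bbA)$, one has $\int_\Omega|\varphi(x_0)|^2\d x_0\le \|\varphi\|_2^2=1$. This immediately reproduces the first sum on the right hand side, without any loss. The tail error $\varepsilon^N$ picks up a factor $m(\Omega)$, which is absorbed into the implicit constant.

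For the Eisenstein contribution with $P\ne G$, I would swap the $\lambda$- and $x_0$-integrations by Fubini and then apply Proposition~\ref{thm:maass-selberg} with $T=\varepsilon^{-1-\delta}$ to each fixed $\varphi\in\calB_P$. This yields
\[
\intop_{\|\lambda\|\le \varepsilon^{-1-\delta}}\intop_\Omega|\Eis_P(\varphi,\lambda)(x_0)|^2 \d x_0\, \d\lambda \ll_\Omega \log(1+\varepsilon^{-1-\delta}+\nu_\varphi)^{n-1}\varepsilon^{-(1+\delta)\dim\a_P^*}.
\]
Since we are in the truncated regime $\nu_\varphi\le \varepsilon^{-1-\delta}$, the logarithm is $\ll \log(\varepsilon^{-1})^{n-1}\ll_\eta \varepsilon^{-\eta}$, and the fact that $\dim\a_P^*\ge 1$ for $P\ne G$ allows me to write $\varepsilon^{-\eta}\le \varepsilon^{-\eta\dim\a_P^*}$ (recall $\varepsilon<1$). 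Combining, the double integral is $\ll_{\Omega,\eta}\varepsilon^{-(1+\delta+\eta)\dim\a_P^*}$, which multiplies the Hecke factor $p^{kn(2\theta_{\varphi,p}-(n-1)+\eta)}$ to produce the second sum on the right hand side.

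I do not expect any serious obstruction here: Proposition~\ref{prop:truncation local} already does the heavy lifting of truncating the spectrum and isolating the relevant Hecke/Plancherel factors, and the key analytic input is the Maass--Selberg-type estimate of Proposition~\ref{thm:maass-selberg}, which is exactly calibrated to produce the $T^{\dim\a_P}$ gain rather than the weaker $T^d$ given by the local Weyl law. The only mild technical point is the bookkeeping with the logarithmic factor, absorbed into $\varepsilon^{-\eta\dim\a_P^*}$ as above.
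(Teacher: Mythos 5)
Your proof is correct and follows essentially the same route as the paper: integrate Proposition~\ref{prop:truncation local} over $x_0$, use the $L^2$-normalization of $\varphi\in\calB_G$ for the discrete part, and apply Proposition~\ref{thm:maass-selberg} (via Fubini) with $T=\varepsilon^{-1-\delta}$ for the Eisenstein part. Your bookkeeping of the logarithm (absorbing it into $\varepsilon^{-\eta\dim\a_P^*}$ using $\dim\a_P^*\ge1$) in fact reproduces the stated exponent $(1+\delta+\eta)\dim\a_P^*$ exactly, which is marginally cleaner than the paper's $(1+\delta)(1+\eta)\dim\a_P^*$; the two are interchangeable since $\delta,\eta$ are arbitrary.
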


\begin{proof}
We integrate both sides of the estimate in \cref{prop:truncation local} over $x_0\in\Omega$.
Apply \cref{thm:maass-selberg} to bound
\[
\intop_{\Omega}
\intop_{\lambda \in i\a_P^*, \|\lambda\| \le \varepsilon^{-1-\delta}} |\Eis_P(\varphi,\lambda)(x_0)|^2 \d \lambda \d x_0 \ll_{\Omega,\eta} \varepsilon^{-(1+\delta)(1+\eta)\dim a^*_P},
\]
for $\nu_\varphi\le\varepsilon^{-1-\delta}$. The claim follows from the fact that $\varphi\in\B_G$ are $L^2$-normalized.
\end{proof}

\section{Proof of \texorpdfstring{\cref{thm:kappa theorem intro}}{Theroem~2} and \texorpdfstring{\cref{thm:optimal-SDH}}{Theorem~3} }
\label{sec:proof of main theorem}

The goal of this section is to prove \cref{thm:kappa theorem intro} and \cref{thm:optimal-SDH}. By \cref{lem:kappa from L2 global convex}, to prove that $\kappa \le \beta:=\frac{n-1}{n-1-2\theta_n}$ it is sufficient to prove that for $\varepsilon_0$ small enough, for every $\varepsilon<\varepsilon_0$ and for 
$k = \lfloor \beta\frac{n+2}{2n}\log_p(\varepsilon^{-1}) \rfloor$
\[
\intop_\Omega\|T^*(p^{nk})K_{\varepsilon,x_0}^{\X_0})\|_2 \d x_0 \ll_\eta \varepsilon^{-\eta},
\]
for every $\eta>0$.

Using \cref{prop:truncation global} and standard modifications, it is sufficient to prove that under the same conditions that
\begin{equation}\label{eq:discrete estimate}
p^{-kn(n-1)}\sum_{\varphi\in \calB_G,\nu_\varphi\le \varepsilon^{-1}} |\lambda_{\varphi}(p^{nk})|^2 \ll_\eta \varepsilon^{-\eta},    
\end{equation}
and for every standard parabolic $P\ne G$
\begin{equation}\label{eq:continuous estimate}
\sum_{\varphi \in \calB_P, \nu_\varphi \le \varepsilon^{-1}} \varepsilon^{-\dim \a_P} p^{kn(2\theta_{\varphi, p}-(n-1)}) \ll_\eta \varepsilon^{-\eta},    
\end{equation}
for every $\eta>0$.

\subsection{The discrete spectrum}\label{subsec:pf- discrete spectrum}
We start by handling \cref{eq:discrete estimate}. We can further divide $\calB_G$ according to shapes, as in \cref{subsec:residual sepctrum}. Each such shape is of the form $S=((a,b))$, for $n=ab$.
We can uniformly bound 
\[
\theta_{\varphi,p} \le (b-1)/2+\theta_a,
\]
where $\theta_a$ is the best known bound towards the GRC for $\GL_a$.
So by \cref{lem:bounds on lambda from theta} we have
\[
|\lambda_{\varphi}(p^{nk})| \ll_\eta p^{nk((b-1)/2+\theta_a+\eta)}.
\]
We have $\theta_a =0$ for $a=1$, and by \cref{eq:luo-rudnick-sarnak} $\theta_a \le \frac{1}{2}-\frac{1}{a^2+1}$.
By \cref{lem:Weyl law residual}, we have
\[
\#\{\varphi\in \calB_S,\nu_\varphi\le \varepsilon^{-1}\} \ll \varepsilon^{-(a+2)(a-1)/2}.
\]
Therefore, applying the above bounds we get
\[
p^{-kn(n-1)}\sum_{\varphi\in \calB_S,\nu_\varphi\le \varepsilon^{-1}} |\lambda_{\varphi}(p^{nk})|^2 \ll \varepsilon^{-(a+2)(a-1)/2}p^{nk (b-1-(n-1)+2\theta_a+\eta)}.
\]
plugging in $k = \lfloor \beta\frac{n+2}{2n}\log_p(\varepsilon^{-1}) \rfloor$, the last value is
\[
\asymp  \varepsilon^{-\eta}\varepsilon^{-(a+2)(a-1)/2-\beta (n+2)(b-1-(n-1)+2\theta_a)/2}.
\]
Making $\eta$ small enough it suffices to show that
\begin{equation}\label{eq: equation ab}
\beta(n+2)(n-1-(b-1)-2\theta_a)-(a+2)(a-1) \ge 0.    
\end{equation}
For $a=1$, $b=n$ we have $\theta_a=0$. Hence \cref{eq: equation ab} is obvious for any $\beta\ge 1$. 

For $1<a<n$, $b=n/a$ we have $2\theta_a \le 1$. Then $n+2\ge a+2$ and \[n-b-2\theta_a> n/2-1\ge a-1.\] So \cref{eq: equation ab} still holds for any $\beta \ge 1$.

Finally, for $a=n,b=1$, \cref{eq: equation ab} will hold for as long as
\[
\beta \ge \frac{n-1}{n-1-2\theta_n}.
\]

Now, assuming \cref{conj:density conj} for $n\ge 4$ and using \cref{subconvex-density-2} and \cref{subconvex-density-3} for $n=2$ and $n=3$, respectively we can handle $a=n$, $b=1$ for $\beta=1$. Indeed, in this case $\calB_S = \calB_{n,\cusp}$. We have
\[
p^{-kn(n-1)}\sum_{\varphi\in \calB_S,\nu_\varphi\le \varepsilon^{-1}} |\lambda_{\varphi}(p^{nk})|^2 
\ll_\eta p^{-kn(n-1)}(\varepsilon^{-1}p^{nk})^\eta(\varepsilon^{-d}+p^{nk(n-1)}).
\]
Assuming $\beta=1$, i.e., $k = \lfloor \frac{n+2}{2n}\log_p(\varepsilon^{-1})\rfloor$, then 
\[
p^{nk(n-1)} \asymp \varepsilon^{(n+2)(n-1)/2}=\varepsilon^{-d},
\]
so the above is $\ll_\eta \varepsilon^{-\eta}$, as needed.

\subsection{The continuous spectrum}\label{subsec:pf- cont spectrum}
In this subsection we prove \cref{eq:continuous estimate} for every $\beta\ge 1$, which is enough for our purpose.

We further divide into shapes, as in \cref{subsec:residual sepctrum}. Let $S= ((a_1,b_1),\dots .,(a_r,b_r))$ be a shape, and let $\calB_S \subset \calB_P$. Notice that in this case $\dim \a_P = r-1$. Since we assume that $P\ne G$, we assume that $r>1$.

We need to prove that for every shape $S$, it holds that 
\[
\sum_{\varphi \in \calB_S, \nu_\varphi \le \varepsilon^{-1}} \varepsilon^{-(r-1)} p^{kn(2\theta_{\varphi, p}-(n-1))} \ll_\eta \varepsilon^{-\eta}.
\]
Without loss of generality, we assume that it holds that \[\max_i\{(b_i-1)/2+\theta_{a_i}\}=(b_1-1)/2+\theta_{a_1}.\]
Then by \cref{lem:residual theta bound} we have
$\theta_{\varphi, p} \le (b_1-1)/2+\theta_{a_1}$. Using \cref{lem:Weyl law residual}, we deduce
\[
\sum_{\varphi \in \calB_S, \nu_\varphi \le \varepsilon^{-1}} \varepsilon^{-(r-1)} p^{kn(2\theta_{\varphi, p}-(n-1))}
\ll \varepsilon^{-(r-1)-\sum_{i=1}^r(a_i+2)(a_i-1)/2}p^{kn(b_1-1+2\theta_{a_1}-(n-1))}.
\]
We plug in $k = \lfloor \frac{n+2}{2n}\log_p(\varepsilon^{-1})\rfloor$, and deduce that it is sufficient to prove that for every shape $S$ it holds that
\[
    -(r-1)-\sum_{i=1}^r(a_i+2)(a_i-1)/2+\frac{n+2}{2}((n-1)-(b_1-1)-2\theta_{a_1}) \ge 0.
\]
we start by noticing that 
\[
r+\sum_{i=1}^r(a_i+2)(a_i-1)/2 = \sum_{i=1}^r((a_i+2)(a_i-1)/2+1)=\sum_{i=1}^r a_i(a_i+1)/2.
\]
we have the following simple lemma
\begin{lemma}
Assume that $\sum_{i=2}^r a_i \le M$.
Then \[\sum_{i=2}^r a_i(a_i+1)/2 \le M(M+1)/2.\]
\end{lemma}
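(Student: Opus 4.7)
The plan is to observe that the function $f(x) := x(x+1)/2$ is \emph{superadditive} on $[0,\infty)$, in the sense that $f(x+y) = f(x) + f(y) + xy \ge f(x) + f(y)$ whenever $x,y \ge 0$. A direct expansion verifies the identity: $(x+y)(x+y+1)/2 - x(x+1)/2 - y(y+1)/2 = xy$. Since $f$ is also monotonically increasing on $[0,\infty)$, these two properties together give the desired bound.

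More precisely, I would argue by induction on $r-1$ (the number of summands). The base case $r=2$ reduces to $f(a_2) \le f(M)$, which is immediate from monotonicity and $a_2 \le M$. For the inductive step, having established the estimate for $r-1$ summands, write
\[
\sum_{i=2}^{r} f(a_i) = f(a_2) + \sum_{i=3}^{r} f(a_i) \le f(a_2) + f\Bigl(\sum_{i=3}^{r} a_i\Bigr) \le f\Bigl(\sum_{i=2}^{r} a_i\Bigr) \le f(M),
\]
where the first inequality uses the induction hypothesis applied to $a_3,\dots,a_r$ (whose sum is bounded by $M - a_2$), the second uses superadditivity, and the third uses monotonicity together with $\sum_{i=2}^{r} a_i \le M$.

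There is no real obstacle here: the proof is an elementary consequence of the two structural properties of $f$, and can equivalently be phrased as a single application of superadditivity iterated $r-2$ times, followed by monotonicity. The only mild point to note is that the argument makes no use of integrality of the $a_i$; it works for any nonnegative real numbers summing to at most $M$, which suggests the bound is of a purely convex-analytic nature.
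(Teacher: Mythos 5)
Your proof is correct and matches the paper's approach exactly: the paper's one-line argument is precisely the superadditivity of $p(x)=x(x+1)/2$ on $[0,\infty)$, with the iteration and the final monotonicity step left implicit. You have merely spelled out the induction that the paper leaves to the reader.
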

\begin{proof}
The polynomial $p(x)=x(x+1)/2$ satisfies for $x_1,x_2\ge0$ that $p(x_1)+p(x_2)\le p(x_1+x_2)$. The lemma follows.
\end{proof}

In our case, we have $\sum_{i=2}^r a_i \le n-a_1b_1$. So we deduce that 
\[
\sum_{i=1}^r a_i(a_i+1)/2 \le (n-a_1b_1)(n-a_1b_1+1)/2.
\]
we deduce that it is sufficient to prove the following.
\begin{lemma}
Denote 
\[F(a,b,n) := 2+(n+2)(n-b-2\theta_{a})-a(a+1)-(n-ab)(n-ab+1).
\]
Then for every positive integers $n\ge 2$ and $a,b$ such that $ab<n$ it holds that
$F(a,b,n)\ge 0.$
\end{lemma}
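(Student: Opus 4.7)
The plan is to reduce the statement to a polynomial identity in two variables and then check it by monotonicity plus a finite case analysis. Substitute $c := n-ab \ge 1$ (this encodes the hypothesis $ab < n$) and $t_a := 2\theta_a$. A direct algebraic expansion of the defining expression of $F$ gives
\[
F(a,b,n) \,=\, a(a-1)b^2 + \bigl[(2a-1)c + 2(a-1) - at_a\bigr]\,b + c(1-t_a) - 2t_a - (a-1)(a+2).
\]
For $a=1$ one has $t_1=0$, and the expression collapses to $F(1,b,n) = c(b+1) \ge 2$, so this case is immediate.

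For $a \ge 2$, the plan is to reduce to the single extremal point $b = c = 1$ via monotonicity. Compute
\[
\frac{\partial F}{\partial b} = 2a(a-1)b + (2a-1)c + 2(a-1) - at_a, \qquad \frac{\partial F}{\partial c} = (2a-1)b + (1-t_a).
\]
Using only the trivial bound $t_a < 1$, both partial derivatives are strictly positive on $b,c \ge 1$ for every $a \ge 2$ (for $\partial F/\partial b$ at $b=c=1$ one gets at least $2a(a-1) + (2a-1) + 2(a-1) - a = 2a^2 + a - 3 \ge 7$). Consequently the minimum of $F$ over positive integer pairs $(b,c)$ is attained at $b = c = 1$, where substitution and simplification yield
\[
F(a,1,a+1) \,=\, 2a - (a+3)\,t_a.
\]

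It therefore suffices to verify $t_a \le 2a/(a+3)$ for every $a \ge 2$. For $a=2$ the Kim--Sarnak bound gives $t_2 = 7/32 < 4/5$. For $a \ge 3$ one has $2a/(a+3) \ge 1 > t_a$, where the strict inequality $t_a < 1$ follows from the explicit Blomer--Brumley values $\theta_3 = 5/14$, $\theta_4 = 9/22$, and from the Luo--Rudnick--Sarnak bound $\theta_a \le \tfrac{1}{2} - \tfrac{1}{a^2+1}$ for $a \ge 5$. This concludes the plan; the only step requiring any care is the initial polynomial expansion, after which the monotonicity reduction and the numerical check of $t_a \le 2a/(a+3)$ are straightforward.
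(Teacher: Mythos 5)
Your proof is correct, and it takes a genuinely different route from the paper. The paper keeps $n$ as a variable and argues by a case split: $n=2$ directly; $a=1$ by a factorization; and for $a\ge 2$ it replaces $\theta_a$ by the crude bound $1/2$ to obtain an auxiliary quantity $G(a,b,n)$, handles $n=3$ separately with Kim--Sarnak, and for $n\ge 4$ observes that $G$ is a downward parabola in $a$ (when $b=1$) or in $b$ (when $a,b\ge 2$), so it suffices to check endpoint values, where $G$ reduces to $n-4\ge 0$ or a short calculation. Your reparametrization $c:=n-ab\ge 1$, $t_a:=2\theta_a$ trades the parabola-endpoint trick for a transparent monotonicity argument: after expanding $F$ as a polynomial in $(b,c)$, both partials are positive for $a\ge 2$ using only $t_a<1$, so the minimum over the lattice $b,c\ge 1$ sits at $b=c=1$. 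There the expression collapses to the single-variable inequality $t_a\le 2a/(a+3)$, which holds with room to spare (Kim--Sarnak for $a=2$, trivially for $a\ge 3$). This is arguably cleaner: the case analysis is shorter, the use of arithmetic input (Kim--Sarnak) is isolated to one line, and the structure of the bound in $a$ alone is laid bare. The paper's approach by contrast never leaves the original coordinates and reads off the needed inequality $n\ge 4$ directly; both are elementary, but yours exposes the slack more clearly. I verified the expansion and the evaluation $F(a,1,a+1)=2a-(a+3)t_a$; they are correct.
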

\begin{proof}
We show by case-by-case analysis. First, it is easy to see that the claim holds for $n=2$, since then $a=b=1$ and $\theta_a=0$.

Now, assume that $a = 1$. Then $\theta_a=0$, and it holds that \[
F(1,b,n)=2+(n+2)(n-b)-2-(n-b)(n-b+1) = (n-b)(n+2-(n-b+1))\ge 0.
\]

Next, assume that $a>1$. We first take care of the case $n=3$. Then we only need to consider $a=2,b=1$ case. It holds that
\[
F(2,1,3)=2+5(2-2\theta_2)-6-2 = 4-10\theta_2.
\]
Plugging in the Kim--Sarnak's bound $\theta_2 \le \frac{7}{64}$ we get the desired result.

Now assume that $a>1$ and $n\ge4$. We will use the bound $\theta_a \le 1/2$ which follows from \cref{eq:luo-rudnick-sarnak}. Then 
\[
F(a,b,n)\ge G(a,b,n) := 2+(n+2)(n-b-1)-a(a+1)-(n-ab)(n-ab+1).
\]
First consider the case $b=1$. Then 
\[
G(a,1,n)= 2+(n+2)(n-2)-a(a+1)-(n-a)(n-a+1).
\]
The values of $G(a,1,n)$, when $n$ is fixed and $a$ varies, lie on a parabola with negative leading coefficient. To prove lower bounds in the range $2\le a\le n-1$ it is sufficient to check the extreme values, namely $a=1$ and $a=n-1$. 

We see that
\[
G(n-1,1,n)=G(1,1,n)= 2+(n+2)(n-2)-2-(n-1)n = n^2-4-n^2+n=n-4\ge0,
\]
as we assumed that $n\ge 4$.

Finally, we are left with the case $a\ge2,b\ge 2$. In this case we have $n\ge ab+1 \ge 5$ and $a\le (n-1)/2$. Fix $a$ and $n$. Then $G(a,b,n)$ as a function of $b$ is again a parabola with negative leading coefficient. So it suffices to check the extreme cases $b=1$ and $b=(n-1)/a$ 
We already proved that $G(a,1,n)\ge 0$, so we are left to show that for $2\le a\le (n-1)/2$, \[G(a,(n-1)/a,n)\ge0.\]
Indeed, we get 
\[
G(a,(n-1)/a,n) = 2+(n+2)(n-(n-1)/a-1)-a(a+1)-2.
\]
Using $(n-1)/a\le (n-1)/2$ and $a\le (n-1)/2$, we get
\begin{align*}
G(a,(n-1)/a,n)&\ge (n+2)((n-1)/2-1)-(n-1)(n+1)/4 \\
&=\frac{2(n+2)(n-3)-(n^2-1)}{4}=\frac{n^2-2n-11}{4}.
\end{align*}
The last value is non-negative since $n\ge 5$.
\end{proof}

\section{Proof of \texorpdfstring{\cref{thm:local-kappa-bound}}{Theorem~4}}
\label{sec: local exponents n=3}

We start the proof for $n$ general, assuming the GRC for $\GL_m$ for all $m\le n$.
The proof is similar to the last section, but we use \cref{lem:kappa from L2 local convex} instead of \cref{lem:kappa from L2 global convex}. 
It is therefore sufficient to show that for every $x_0 \in \X_0$, for $\varepsilon<\varepsilon_0$, for $k=\lfloor\frac{n+2}{2n}\log_p(\varepsilon^{-1}) \rfloor$, it holds that \[
\|T^*(p^{nk})K_{\varepsilon,x_0}^{\X_0}\|_2\,\,\ll_\eta \varepsilon^{-\eta}.
\]
We choose a compact subset $\Omega$ which contains $x_0$.
Using \cref{prop:truncation local}, we reduce to proving that 
\begin{equation}\label{eq:n=3 pf 1}
\sum_P C_P\sum_{\varphi\in \calB_P, \nu_\varphi \le \varepsilon^{-1}} p^{kn(2\theta_{\varphi,p}-(n-1))}
\intop_{\lambda\in i\a_P^*, \|\lambda\|\le \varepsilon^{-1}} |\Eis_P(\varphi,\lambda)(x_0)|^2 \d \lambda \ll \varepsilon^{-\eta}.    
\end{equation}
We can then further divide the above sum into shapes as done in \cref{sec:proof of main theorem}. The basic observation is that for a shape $S=((a_1,b_1),\dots ,(a_r,b_r))$, if $b_i=1$ for all $i$ (i.e., the shape is \emph{cuspidal}), then by \cref{lem:residual theta bound} for every $\varphi\in \calB_S$ the GRC implies that $\theta_{\varphi,p}=0$. 
Therefore, for cuspidal $S$, assuming GRC,
\begin{align*}
&\sum_{\varphi \in \calB_S, \nu_\varphi \le \varepsilon^{-1}} p^{kn(2\theta_{\varphi,p}-(n-1))}
\intop_{\lambda\in i\a_P^*, \|\lambda\|\le \varepsilon^{-1}} |\Eis_P(\varphi,\lambda)(x_0)|^2 \d \lambda  \\
=& p^{-kn(n-1)}\sum_{\varphi \in \calB_S, \nu_\varphi \le \varepsilon^{-1}} 
\intop_{\lambda\in i\a_P^*, \|\lambda\|\le \varepsilon^{-1}} |\Eis_P(\varphi,\lambda)(x_0)|^2 \d \lambda
\end{align*}
Using \cref{prop:Weyl Law local}, the last value is 
\[
\ll_\Omega p^{-kn(n-1)}\varepsilon^{-d},
\]
and plugging in the value of $k$ the last value is $\ll 1$. 

We, therefore, deduce that assuming the GRC, we are only left with non-cuspidal shapes to deal with.

For $S=((1,n))$, the only representation $\varphi \in \calB_S$ is the constant $L^2$-normalized function $\varphi_0$. In this case $\theta_{\varphi_0,p}=(n-1)/2$, and its contribution is 
\[
p^{kn(2\theta_{\varphi_0,p}-(n-1))}\varphi_0(x_0) \ll 1.
\]

Now, let us fix $n=3$. In this case, the only non-cuspidal shapes are $S=((1,3))$ and $S=((1,2),(1,1))$.
Following the above, we are left with the shape $S=((1,2),(1,1))$. In this case there is exactly one element $\varphi_1 \in\calB_S$ with $\theta_{\varphi_1,p}=1/2$. 
We have the following result.
\begin{prop}\label{prop:max-degn-eis-bound}
For every $\lambda \in i\a_P^*$ and every $\eta>0$, it holds that 
\[
|\Eis_P(\varphi_1,\lambda)(x_0)| \ll_{\eta,x_0} \lambda^{3/4+\eta}.
\]
\end{prop}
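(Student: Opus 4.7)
The plan is to identify $\varphi_1$ explicitly via the residual construction of \cref{subsec:residual sepctrum} and then bound $\Eis_P(\varphi_1,\lambda)(x_0)$ via its Fourier--Whittaker expansion. For the shape $S=((1,2),(1,1))$ in $n=3$, the form $\varphi_1$ is the normalized automorphic form on the Levi $M_P\cong \GL_2\times\GL_1$ of $P=P_{2,1}\subset \PGL_3$ attached to the trivial representation: on the $\GL_2$ factor it is the residue at the edge of convergence of the minimal-parabolic Eisenstein series, i.e.\ the constant function; on the $\GL_1$ factor it is the trivial character. Consequently, up to a nonzero normalization, $\Eis_P(\varphi_1,\lambda)$ is the classical maximal-parabolic degenerate spherical Eisenstein series on $\PGL_3$ induced from the trivial character of $M_P$, twisted by $\lambda\in i\a_P^*$ (a one-dimensional parameter which I write as a scalar $\lambda'\in\R$).

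I would then expand $\Eis_P(\varphi_1,\lambda)(x_0)$ along the unipotent radical $N_P$. Writing the Iwasawa decomposition $x_0=u\,\diag(y_1y_2,y_1,1)\,k$ (so that $y_1,y_2$ are bounded above and below for $x_0$ in a fixed compact domain) the expansion consists of: (a) the constant term along the Borel, a finite linear combination of characters $y_1^\alpha y_2^\beta$ indexed by the coset $W_P\backslash W$ and weighted by intertwining coefficients that are ratios of completed Riemann zeta values at shifts by $3i\lambda'$; and (b) non-constant Fourier coefficients of the shape $\sigma_{-2i\lambda'}(m)\,K_{i\lambda'}(2\pi m y_j)$ in the appropriate Iwasawa coordinate $y_j$, multiplied by oscillating characters in $u$.

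For (a), I would apply the convexity estimate $|\zeta(\sigma+it)|\ll_\eta |t|^{\max((1-\sigma)/2,0)+\eta}$ on the critical strip, together with Stirling for the archimedean Gamma factors and the standard lower bound $|\zeta(1+it)|^{-1}\ll\log|t|$, to obtain a contribution of size $\ll_{\eta,x_0}|\lambda|^{1/2+\eta}$. For (b), the $K$-Bessel function is exponentially small outside the transition range $my_j\asymp|\lambda'|$, where $|K_{i\lambda'}(2\pi m y_j)|\ll|\lambda'|^{-1/4+\eta}$; combining the $O(|\lambda|)$ effective summands with the divisor bound $|\sigma_{-2i\lambda'}(m)|\ll m^\eta$ and summing trivially gives $\ll_{\eta,x_0}|\lambda|^{3/4+\eta}$. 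Adding the two bounds yields the proposition.

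The main obstacle is carrying out the Fourier expansion precisely, including all intertwining coefficients, the position of the spectral parameter in the degenerate Whittaker function, and the normalizations, and then applying the sharp amplitude bound for $K_{i\lambda'}$ in the transition region. The exponent $3/4$ is exactly the convexity-type bound for this Eisenstein series and, as the computation in \cref{subsec:pf- cont spectrum} shows, it is sharp enough for the contribution of the shape $((1,2),(1,1))$ to \cref{eq:continuous estimate} in the local version. Subconvex improvements are possible (via Weyl-type bounds on divisor sums twisted by Bessel functions) but are not needed here.
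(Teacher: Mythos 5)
The paper's proof is a citation: it obtains the bound from the functional equation of $\Eis_P(\varphi_1,\lambda)$, standard bounds for the Riemann $\xi$-function, and the Phragm\'en--Lindel\"of convexity principle, referring to Blomer's Epstein zeta-function paper for the details (and noting that Blomer in fact proves the deeper subconvex exponent $1/2$). Your proposal takes a genuinely different route --- a direct Fourier--Whittaker expansion of the degenerate Eisenstein series --- which is a reasonable alternative in principle, but the execution has gaps that prevent it from clearly landing on the exponent $3/4$.

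First, the cited amplitude $|K_{i\lambda'}(2\pi m y_j)|\ll|\lambda'|^{-1/4+\eta}$ at the transition $my_j\asymp|\lambda'|$ is not the Airy maximum (that is $\asymp|\lambda'|^{-1/3}$ after normalizing by $e^{\pi|\lambda'|/2}$); the $|\lambda'|^{-1/4}$ is the amplitude at unit distance from the turning point, coming from the factor $(|\lambda'|^2-(my_j)^2)^{-1/4}$ in the regime $my_j<|\lambda'|$. If one sums the $O(|\lambda'|)$ effective terms using the correct amplitude profile $\min\bigl(|\lambda'|^{-1/3},\,(|\lambda'|\,|\,|\lambda'|-my_j\,|)^{-1/4}\bigr)$ together with the divisor bound, the total is $\ll|\lambda'|^{1/2+\eta}$, not $|\lambda'|^{3/4+\eta}$. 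This is not a point in your favor: it would mean the direct Fourier expansion gives Blomer's subconvex exponent $1/2$ with no work, which contradicts the paper's characterization of Blomer's result as ``far deeper.'' The resolution is that the degenerate Whittaker coefficient of this particular Eisenstein series is not simply $\sigma_{-2i\lambda'}(m)\,K_{i\lambda'}(2\pi m y_j)$: because $\varphi_1$ is the residual (trivial) representation of the $\GL_2$-factor of the Levi, which has no Whittaker model, the generic and one of the two degenerate Fourier directions vanish, and the surviving coefficient carries extra archimedean factors (a completed zeta ratio at $\lambda'$-dependent arguments) whose Stirling asymptotics supply the additional power of $|\lambda'|$. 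Pinning this down is exactly the delicate bookkeeping you flag as ``the main obstacle''; without it, the intermediate bounds do not reliably recombine to $3/4$.

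Second, the constant-term estimate $\ll_{\eta,x_0}|\lambda|^{1/2+\eta}$ via $\zeta$-convexity is not justified as stated. On the critical line some of the intertwining coefficients are unimodular by the functional equation (giving $O(1)$ per term), while others involve ratios of completed zeta values at non-mirror points and do grow; which ones appear depends on the exact Weyl cosets $W_P\backslash W$ and on the Langlands parameter $(1/2+it,-1/2+it,-2it)$. You should either compute these coefficients explicitly or use the unitarity of the full scattering matrix to bound the constant term along $P$ and then descend to the Borel. In contrast, the paper's functional-equation-plus-Phragm\'en--Lindel\"of argument bypasses the Whittaker expansion entirely: it needs only the meromorphic continuation, the explicit $\xi$-factors appearing in the functional equation, and convexity, which is why the paper can afford to outsource it in two sentences to Blomer. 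Your approach could be made rigorous, but it requires substantially more detail than the sketch supplies, and the current version does not actually prove the exponent $3/4$.
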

The result follows from the functional equation of $\Eis_P(\varphi_1,\lambda)$, standard bounds of the Riemann $\xi$-function, and the Phragm\'en--Lindel\"of convexity principle. This is explained in \cite{blomer2020epstein}.
As a matter of fact, \cite[Theorem~1]{blomer2020epstein} proves a stronger and far deeper result where the exponent is $1/2$ instead of $3/4$. 

Now, using \cref{prop:max-degn-eis-bound}, we deduce that
\begin{align*}
&\sum_{\varphi \in \calB_S, \nu_\varphi \le \varepsilon^{-1}} p^{3k(2\theta_{\varphi,p}-2)}
\intop_{\lambda\in i\a_P^*, \|\lambda\|\le \varepsilon^{-1}} |\Eis_P(\varphi,\lambda)(x_0)|^2 \d \lambda \\
&\ll_{\eta,x_0} p^{-3k}\varepsilon^{-5/2-\eta}.
\end{align*}
In this case $p^k \asymp \varepsilon^{-5/6}$, so we conclude.

\begin{remark}
The argument above using the local $L^\infty$-bound of the maximal degenerate Eisenstein series extends to the shapes of the form $S=((1,n-1),(1,1))$ for any $n$. However, for $n=4$ we do not know how to handle shapes of the form $S=((1,2),(1,2))$ or $S=((1,2),(2,1))$ or $S=((2,2))$. In all cases, we need good uniform bounds for $\Eis_P(\varphi,\lambda)(x_0)$.
\end{remark} 

\begin{remark}
Without assuming the GRC we do not know, even for $n=2$, whether $\kappa(x_0)=1$ for all $x_0\in\X$. 
The problem reduces to the following local version of Sarnak's density conjecture, which we state for general $n$: for every $x_0\in \X$ and $l\ge 0$, $T\ge 1$
\[\sum_{\varphi\in\calF_T} \left|\lambda_\varphi(p^l)\right|^2 |\varphi(x_0)|^2
\ll_{\delta,p,x_0}\left(Tp^l\right)^{\delta}\left(T^{d}+p^{l(n-1)}\right),\]
for every $\delta>0$.
This version is open even for $n= 2$ but can be proven for some $x_0$ using different methods. This result will appear elsewhere.
\end{remark}

\bibliographystyle{acm}
\bibliography{./database}

\begin{thebibliography}{10}

\bibitem{arthur2005intro}
{\sc Arthur, J.}
\newblock An introduction to the trace formula.
\newblock In {\em Harmonic analysis, the trace formula, and {S}himura
  varieties}, vol.~4 of {\em Clay Math. Proc.} Amer. Math. Soc., Providence,
  RI, 2005, pp.~1--263.

\bibitem{assing2022density}
{\sc Assing, E., and Blomer, V.}
\newblock The density conjecture for principal congruence subgroups.
\newblock {\em arXiv preprint arXiv:2204.08868\/} (2022).

\bibitem{blomer2013applications}
{\sc Blomer, V.}
\newblock Applications of the {K}uznetsov formula on {${\rm GL}(3)$}.
\newblock {\em Invent. Math. 194}, 3 (2013), 673--729.

\bibitem{blomer2019density}
{\sc Blomer, V.}
\newblock Density theorems for {${\rm GL}(n)$}.
\newblock {\em arXiv preprint arXiv:1906.07459\/} (2019).

\bibitem{blomer2020epstein}
{\sc Blomer, V.}
\newblock Epstein zeta-functions, subconvexity, and the purity conjecture.
\newblock {\em J. Inst. Math. Jussieu 19}, 2 (2020), 581--596.

\bibitem{blomer2011ramanujan}
{\sc Blomer, V., and Brumley, F.}
\newblock On the {R}amanujan conjecture over number fields.
\newblock {\em Ann. of Math. (2) 174}, 1 (2011), 581--605.

\bibitem{blomer2014sato}
{\sc Blomer, V., Buttcane, J., and Raulf, N.}
\newblock A {S}ato-{T}ate law for {$\rm GL (3)$}.
\newblock {\em Comment. Math. Helv. 89}, 4 (2014), 895--919.

\bibitem{blomer2016supnorm-pgln}
{\sc Blomer, V., and Maga, P.}
\newblock Subconvexity for sup-norms of cusp forms on {$\rm {PGL}(n)$}.
\newblock {\em Selecta Math. (N.S.) 22}, 3 (2016), 1269--1287.

\bibitem{blomer2022kloosterman}
{\sc Blomer, V., and Man, S.~H.}
\newblock Bounds for kloosterman sums on gl(n).
\newblock {\em arXiv preprint arXiv:2208.01295\/} (2022).

\bibitem{bordenave2021cutoff}
{\sc Bordenave, C., and Lacoin, H.}
\newblock Cutoff at the entropic time for random walks on covered expander
  graphs.
\newblock {\em J. Inst. Math. Jussieu\/} (2021), 1--46.

\bibitem{browning2019twisted}
{\sc Browning, T.~D., Kumaraswamy, V.~V., and Steiner, R.~S.}
\newblock Twisted {L}innik implies optimal covering exponent for {$S^3$}.
\newblock {\em Int. Math. Res. Not. IMRN}, 1 (2019), 140--164.

\bibitem{bruhat1972groupes}
{\sc Bruhat, F., and Tits, J.}
\newblock Groupes r\'{e}ductifs sur un corps local.
\newblock {\em Inst. Hautes \'{E}tudes Sci. Publ. Math.}, 41 (1972), 5--251.

\bibitem{buttcane2020plancherel}
{\sc Buttcane, J., and Zhou, F.}
\newblock Plancherel distribution of {S}atake parameters of {M}aass cusp forms
  on {${\rm GL}_3$}.
\newblock {\em Int. Math. Res. Not. IMRN}, 5 (2020), 1417--1444.

\bibitem{chiu1995covering}
{\sc Chiu, P.}
\newblock Covering with {H}ecke points.
\newblock {\em J. Number Theory 53}, 1 (1995), 25--44.

\bibitem{clozel2001hecke}
{\sc Clozel, L., Oh, H., and Ullmo, E.}
\newblock Hecke operators and equidistribution of {H}ecke points.
\newblock {\em Invent. Math. 144}, 2 (2001), 327--351.

\bibitem{donnelly1982cuspidal}
{\sc Donnelly, H.}
\newblock On the cuspidal spectrum for finite volume symmetric spaces.
\newblock {\em J. Differential Geometry 17}, 2 (1982), 239--253.

\bibitem{Duistermaat1979spectra}
{\sc Duistermaat, J.~J., Kolk, J. A.~C., and Varadarajan, V.~S.}
\newblock Spectra of compact locally symmetric manifolds of negative curvature.
\newblock {\em Invent. Math. 52}, 1 (1979), 27--93.

\bibitem{flath1979decomposition}
{\sc Flath, D.}
\newblock Decomposition of representations into tensor products.
\newblock In {\em Automorphic forms, representations and {$L$}-functions
  ({P}roc. {S}ympos. {P}ure {M}ath., {O}regon {S}tate {U}niv., {C}orvallis,
  {O}re., 1977), {P}art 1\/} (1979), Proc. Sympos. Pure Math., XXXIII, Amer.
  Math. Soc., Providence, R.I., pp.~179--183.

\bibitem{ghosh2013aut}
{\sc Ghosh, A., Gorodnik, A., and Nevo, A.}
\newblock Diophantine approximation and automorphic spectrum.
\newblock {\em Int. Math. Res. Not. IMRN}, 21 (2013), 5002--5058.

\bibitem{ghosh2014metric}
{\sc Ghosh, A., Gorodnik, A., and Nevo, A.}
\newblock Metric {D}iophantine approximation on homogeneous varieties.
\newblock {\em Compos. Math. 150}, 8 (2014), 1435--1456.

\bibitem{ghosh2015diophantine}
{\sc Ghosh, A., Gorodnik, A., and Nevo, A.}
\newblock Diophantine approximation exponents on homogeneous varieties.
\newblock In {\em Recent trends in ergodic theory and dynamical systems},
  vol.~631 of {\em Contemp. Math.} Amer. Math. Soc., Providence, RI, 2015,
  pp.~181--200.

\bibitem{ghosh2018best}
{\sc Ghosh, A., Gorodnik, A., and Nevo, A.}
\newblock Best possible rates of distribution of dense lattice orbits in
  homogeneous spaces.
\newblock {\em J. Reine Angew. Math. 745\/} (2018), 155--188.

\bibitem{goldfeld2006aut}
{\sc Goldfeld, D.}
\newblock {\em Automorphic forms and {$L$}-functions for the group {${\rm
  GL}(n,\mathbb R)$}}, vol.~99 of {\em Cambridge Studies in Advanced
  Mathematics}.
\newblock Cambridge University Press, Cambridge, 2006.
\newblock With an appendix by Kevin A. Broughan.

\bibitem{golubev2019cutoff}
{\sc Golubev, K., and Kamber, A.}
\newblock Cutoff on hyperbolic surfaces.
\newblock {\em Geom. Dedicata 203\/} (2019), 225--255.

\bibitem{golubev2020sarnak}
{\sc Golubev, K., and Kamber, A.}
\newblock On {S}arnak's density conjecture and its applications.
\newblock {\em arXiv preprint arXiv:2004.00373\/} (2020).

\bibitem{jana2021application}
{\sc Jana, S.}
\newblock Applications of analytic newvectors for {$\text{GL}(n)$}.
\newblock {\em Math. Ann. 380}, 3-4 (2021), 915--952.

\bibitem{jana2022eisenstein-average}
{\sc Jana, S., and Kamber, A.}
\newblock On the local ${L}^2$-bound of the {E}isenstein series.
\newblock {\em arXiv preprint arXiv:2210.16291\/} (2022).

\bibitem{jana2022eisenstein-pointwise}
{\sc Jana, S., and Kamber, A.}
\newblock On the local ${L}^2$-bound of the {E}isenstein series - pointwise
  bound.
\newblock {\em in preparation\/} (2022).

\bibitem{kim2003functoriality}
{\sc Kim, H.~H., and Sarnak, P.}
\newblock {F}unctoriality for the exterior square of {${\rm GL}_4$} and the
  symmetric fourth of {${\rm GL}_2$}, appendix 2.
\newblock {\em J. Amer. Math. Soc. 16}, 1 (2003), 139--183.

\bibitem{kleinbock2015sphere}
{\sc Kleinbock, D., and Merrill, K.}
\newblock Rational approximation on spheres.
\newblock {\em Israel J. Math. 209}, 1 (2015), 293--322.

\bibitem{lindenstrauss2007existence}
{\sc Lindenstrauss, E., and Venkatesh, A.}
\newblock Existence and {W}eyl's law for spherical cusp forms.
\newblock {\em Geom. Funct. Anal. 17}, 1 (2007), 220--251.

\bibitem{luo1999generalized}
{\sc Luo, W., Rudnick, Z., and Sarnak, P.}
\newblock On the generalized {R}amanujan conjecture for {${\rm GL}(n)$}.
\newblock In {\em Automorphic forms, automorphic representations, and
  arithmetic ({F}ort {W}orth, {TX}, 1996)}, vol.~66 of {\em Proc. Sympos. Pure
  Math.} Amer. Math. Soc., Providence, RI, 1999, pp.~301--310.

\bibitem{miller2001existence}
{\sc Miller, S.~D.}
\newblock On the existence and temperedness of cusp forms for {${\rm
  SL}_3({\mathbb Z})$}.
\newblock {\em J. Reine Angew. Math. 533\/} (2001), 127--169.

\bibitem{moeglin1989residual}
{\sc M{\oe}glin, C., and Waldspurger, J.-L.}
\newblock Le spectre r\'{e}siduel de {${\rm GL}(n)$}.
\newblock {\em Ann. Sci. \'{E}cole Norm. Sup. (4) 22}, 4 (1989), 605--674.

\bibitem{moeglin1995spectral}
{\sc M{\oe}glin, C., and Waldspurger, J.-L.}
\newblock {\em Spectral decomposition and {E}isenstein series}, vol.~113 of
  {\em Cambridge Tracts in Mathematics}.
\newblock Cambridge University Press, Cambridge, 1995.
\newblock Une paraphrase de l'\'{E}criture [A paraphrase of Scripture].

\bibitem{muller2007weyl}
{\sc M\"{u}ller, W.}
\newblock Weyl's law for the cuspidal spectrum of {${\rm SL}_n$}.
\newblock {\em Ann. of Math. (2) 165}, 1 (2007), 275--333.

\bibitem{oh2002uniform}
{\sc Oh, H.}
\newblock Uniform pointwise bounds for matrix coefficients of unitary
  representations and applications to {K}azhdan constants.
\newblock {\em Duke Math. J. 113}, 1 (2002), 133--192.

\bibitem{parzanchevski2018super}
{\sc Parzanchevski, O., and Sarnak, P.}
\newblock Super-golden-gates for {$PU(2)$}.
\newblock {\em Adv. Math. 327\/} (2018), 869--901.

\bibitem{platonov1994algebraic}
{\sc Platonov, V., and Rapinchuk, A.}
\newblock {\em Algebraic groups and number theory}, vol.~139 of {\em Pure and
  Applied Mathematics}.
\newblock Academic Press, Inc., Boston, MA, 1994.
\newblock Translated from the 1991 Russian original by Rachel Rowen.

\bibitem{sardari2019siegel}
{\sc Sardari, N.~T.}
\newblock The {S}iegel variance formula for quadratic forms.
\newblock {\em arXiv preprint arXiv:1904.08041\/} (2019).

\bibitem{sarnak2005notes}
{\sc Sarnak, P.}
\newblock Notes on the generalized {R}amanujan conjectures.
\newblock In {\em Harmonic analysis, the trace formula, and {S}himura
  varieties}, vol.~4 of {\em Clay Math. Proc.} Amer. Math. Soc., Providence,
  RI, 2005, pp.~659--685.

\bibitem{sarnak2015lettermiller}
{\sc Sarnak, P.}
\newblock {L}etter to {S}tephen {D}. {M}iller and {N}aser {T}alebizadeh
  {S}ardari on optimal strong approximation by integral points on quadrics.
\newblock available at http://publications.ias.edu/sarnak, 2015.

\bibitem{sarnak1991bounds}
{\sc Sarnak, P., and Xue, X.~X.}
\newblock Bounds for multiplicities of automorphic representations.
\newblock {\em Duke Math. J. 64}, 1 (1991), 207--227.

\bibitem{sarnak1991diophantine}
{\sc Sarnak, P.~C.}
\newblock Diophantine problems and linear groups.
\newblock In {\em Proceedings of the {I}nternational {C}ongress of
  {M}athematicians, {V}ol. {I}, {II} ({K}yoto, 1990)\/} (1991), Math. Soc.
  Japan, Tokyo, pp.~459--471.

\bibitem{shalika1974multiplicity}
{\sc Shalika, J.~A.}
\newblock The multiplicity one theorem for {${\rm GL}_{n}$}.
\newblock {\em Ann. of Math. (2) 100\/} (1974), 171--193.

\bibitem{silberger1979harmonic}
{\sc Silberger, A.~J.}
\newblock {\em Introduction to harmonic analysis on reductive {$p$}-adic
  groups}, vol.~23 of {\em Mathematical Notes}.
\newblock Princeton University Press, Princeton, N.J.; University of Tokyo
  Press, Tokyo, 1979.
\newblock Based on lectures by Harish-Chandra at the Institute for Advanced
  Study, 1971--1973.

\end{thebibliography}

\end{document}